\documentclass[11pt,a4paper]{article}
\usepackage{enumerate}
\usepackage{amsmath}
\usepackage{fix-cm}
\usepackage{pb-diagram}
\usepackage[english]{babel}
\usepackage{amssymb,latexsym}
\usepackage{url}
\usepackage{atbegshi}
\usepackage{enumitem}  
\usepackage{float}
\usepackage{tikz-cd}
\usepackage{mathalpha}
\usepackage[dvipsnames]{xcolor}
\usepackage{authblk}
\usepackage{mathrsfs}
\usepackage{bm}
\usepackage{caption}

\newcommand{\thref}[1]{\hyperref[#1]{Theorem~\ref*{#1}}}
\newcommand{\corref}[1]{\hyperref[#1]{Corollary~\ref*{#1}}}
\newcommand{\propref}[1]{\hyperref[#1]{Proposition~\ref*{#1}}}
\newcommand{\secref}[1]{\hyperref[#1]{Section~\ref*{#1}}}
\newcommand{\lemref}[1]{\hyperref[#1]{Lemma~\ref*{#1}}}
\newcommand{\rkref}[1]{\hyperref[#1]{Remark~\ref*{#1}}}
\newcommand{\defref}[1]{\hyperref[#1]{Definition~\ref*{#1}}}
\newcommand{\notref}[1]{\hyperref[#1]{Notation~\ref*{#1}}}
\newcommand{\Conref}[1]{\hyperref[#1]{Conjecture~\ref*{#1}}}
\usepackage{amsthm}
\usepackage[pdftex, bookmarks, colorlinks=true, pdfpagemode=UseOutlines,
        linkcolor=blue,
        urlcolor=primaryblue,
        ]{hyperref}
\usepackage{hyperref}

\definecolor{primaryblue}{RGB}{70,130,180}

\usepackage{comment}

\DeclareMathOperator{\radd}{rad}
\DeclareMathOperator{\tann}{tan}
\DeclareMathOperator{\Abs}{{Abs,p}}

\DeclareMathOperator{\spec}{Spec}

\DeclareMathOperator{\im}{Im}
\DeclareMathOperator{\Coex}{Coex}
\DeclareMathOperator{\Ex}{Ex}

\DeclareMathOperator{\w}{\omega}

\def \bui#1#2{\mathrel{\mathop{\kern 0pt#1}\limits^{#2}}}
\def \buil#1#2{\mathrel{\mathop{\kern 0pt#1}\limits_{#2}}}

\let\<\langle
\let\>\rangle
\makeatletter
\renewcommand{\@cite}[2]{%
  \textcolor{green}{[}
  {#1\if@tempswa \textcolor{green}{, #2}\fi}
  \textcolor{green}{]}
}
\makeatother

\newtheorem{example}{Examples}[section]
\newtheorem{thm}{Theorem}[section]
\newtheorem{lemma}[thm]{Lemma}
\newtheorem{prop}[thm]{Proposition}

\newtheorem{remark}[thm]{Remark}
\newtheorem{remarks}[thm]{Remarks}
\newtheorem{definition}[thm]{Definition}
\newtheorem{notation}[thm]{Notation}
\newtheorem{exabout:ample}[thm]{Example}

\begin{document}
\date{\normalsize \today}

\title{On the spectrum of the biharmonic Steklov operator with Neumann boundary conditions acting on forms of the unit Euclidean ball}
\author[1,2]{Rodolphe Abou Assali\thanks{\texttt{rodolphe.abou-assali@univ-lorraine.fr, rodolphe.abouassali.1@ul.edu.lb}}}

\affil[1]{{\footnotesize Lebanese University, Faculty of Sciences II, Department of Mathematics, P.O. Box 90656 Fanar-Matn, Lebanon}}
\affil[2]{\footnotesize Universit\'e de Lorraine, CNRS, IECL, F-54000 Nancy, France}

\newcommand{\myAbst}[1]{
    \begin{center}
        \begin{minipage}{0.7\textwidth}
        \textbf{Abstract}. {\small #1}
        \end{minipage}
    \end{center}
}

\maketitle 
\myAbst{%
We compute the spectrum of the biharmonic Steklov operator with Neumann boundary conditions acting on differential $p$-forms over the unit Euclidean ball. We apply it to test how far one of Kuttler-Sigillito inequalities is from being optimal.
}
\textit{Mathematics Subject Classification} (2020): 35J40, 35P15, 58C40, 58J32, 58J50

\textit{Keywords}: Riemannian manifolds with boundary, biharmonic Steklov operator, eigenvalue estimates, differential forms

\section{Introduction} \label{sec:introduction}
Let $(M^n,g)$ be a smooth compact manifold of dimension $n\geq2$ with smooth boundary. Let us start by recalling the absolute de Rham cohomology of degree $p$ is the finite dimensional space \begin{equation}\label{eq:Hap}
        H_{\Abs}(M) = \left\{ \w \in \Omega^p(M) \mid d\w = 0 \text{ on } M, \ \delta\w = 0 \text{ on } M, \ \nu \lrcorner \w = 0 \text{ on } \partial M \right\}.
 \end{equation} We refer to \cite[Section 2]{RaulotSavo} for more details. We recall the Steklov (Dirichlet-to-Neumann) problem for differential forms: \cite[Section 1.1]{RaulotSavo}
\begin{equation}\label{eq:steklovFormes}
    \left\{
    \hspace{-0.3em}
    \begin{array}{r@{\quad}c@{\quad}l@{\quad}l}
        \Delta {\w} &=& 0 & \text{on } M \\[0.15em]
        \nu \lrcorner {\w} &=& 0 & \text{on } \partial M \\[0.15em]
        \nu \lrcorner d\w &=& -\sigma \iota^* {\w} & \text{on } \partial M
    \end{array}
    \right.
\end{equation}where $\sigma\in \mathbb{R}$. 
 The spectrum of the Steklov operator on forms is discrete, consisting of eigenvalues 
\[0 < \sigma_{1,p} \leq \sigma_{2,p} \leq \ldots 
\] Note that $\sigma_{0,p}=0$ is an eigenvalue if and only if  $H_{\Abs}(M)\neq0.$ For $p\in\{0,\cdots,n\}$, the kernel of the Steklov problem is $H_{\Abs}(M)$. 
We review the scalar biharmonic Steklov eigenvalue problem with Dirichlet boundary conditions: 
\begin{equation}\label{eq:DBSfct}
    \left\{
    \hspace{-0.3em}
    \begin{array}{r@{\quad}c@{\quad}l@{\quad}l}
        \Delta^2 f &=& 0 & \text{on } M \\[0.15em]
        f &=& 0 & \text{on } \partial M \\[0.15em]
        \Delta f - q \partial_\nu f &=& 0 & \text{on } \partial M
    \end{array}
    \right.
\end{equation}where $f$ is a smooth function on $M$ and the real number $q$ is the eigenvalue of $DBS^{func}$. Its spectrum is discrete, consisting of positive eigenvalues. The latter was studied by A. Ferrero, F. Gazzola and T. Weth in \cite{FerreroGazzolaWeth} where the authors gave a physical interpretation of the first eigenvalue and of the Steklov boundary conditions. A generalization of \eqref{eq:DBSfct} to the setting of differential forms and is noted by DBS and given by:
\begin{thm}\cite[Theorem 2.2]{BSFidaGeorgeOlaNicolas}\label{thm:DBSTHM}
The boundary-value problem \begin{equation}\label{eq:DBSF1}
    (DBS)
    \left\{
    \hspace{-0.3em}
    \begin{array}{r@{\quad}c@{\quad}l@{\quad}l}
        \Delta^2 \w &=& 0 & \text{on } M \\[0.15em]
        \w &=& 0 & \text{on } \partial M \\[0.15em]
        \nu \lrcorner \Delta \w + q \iota^* \delta\w &=& 0 & \text{on } \partial M \\[0.15em]
        \iota^* \Delta \w - q\nu \lrcorner d\w &=& 0 & \text{on } \partial M
    \end{array}
    \right.
\end{equation} on $p$-forms, has a discrete spectrum consisting of an unbounded monotonously non-decreasing sequence of positive eigenvalues of finite multiplicities $(q_{j,p})_{j\geq1}.$\end{thm}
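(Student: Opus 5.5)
The plan is to realize the DBS eigenvalues as the spectrum of a self-adjoint operator with compact resolvent, built from a variational (Rayleigh quotient) characterization. Set
\[
\mathcal H=\{\w\in H^2\Omega^p(M)\mid \w=0 \text{ on } \partial M\}.
\]
The first step is a Green formula for $\Delta=d\delta+\delta d$. For $\w,\eta\in\mathcal H$ with $\Delta^2\w=0$, integrating by parts twice gives
\[
\int_M\langle\Delta\w,\Delta\eta\rangle=\int_{\partial M}\Big(\langle\nu\lrcorner\Delta\w,\nu\lrcorner\eta\rangle\ \text{-type terms}\Big)+\ldots
\]
Since $\eta=0$ on $\partial M$, only the terms containing first derivatives of $\eta$ survive. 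Because $\eta$ vanishes on the boundary, $\nu\lrcorner d\eta=\nabla_\nu\eta$ tangentially and $\iota^*\delta\eta=-\nu\lrcorner\nabla_\nu\eta$ normally. So $\nabla_\nu\eta$ is determined by the pair $(\iota^*\delta\eta,\nu\lrcorner d\eta)$.

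Carrying out this computation, I expect
\[
\int_M\langle\Delta\w,\Delta\eta\rangle=\int_{\partial M}\big(\langle\iota^*\Delta\w,\nu\lrcorner d\eta\rangle-\langle\nu\lrcorner\Delta\w,\iota^*\delta\eta\rangle\big).
\]
Combined with the boundary conditions of DBS, this says $\w$ is an eigenform if and only if
\[
\int_M\langle\Delta\w,\Delta\eta\rangle=q\,B(\w,\eta)\quad\text{for all }\eta\in\mathcal H,
\]
where
\[
B(\w,\eta)=\int_{\partial M}\big(\langle\nu\lrcorner d\w,\nu\lrcorner d\eta\rangle+\langle\iota^*\delta\w,\iota^*\delta\eta\rangle\big)=\int_{\partial M}\langle\nabla_\nu\w,\nabla_\nu\eta\rangle.
\]

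The key analytic step is to reduce to the trace space. Let $\mathcal H_0=\{\w\in\mathcal H\mid \nabla_\nu\w=0\}=H^2_0$ and let $\mathcal B$ be the space of biharmonic forms in $\mathcal H$. First I will show that $\|\Delta\cdot\|_{L^2}$ is a norm on $\mathcal H$ equivalent to the $H^2$ norm. This requires injectivity of $\Delta$ on $\mathcal H$, which I will get from Green's formula: $\Delta\w=0$ with $\w=0$ on $\partial M$ gives $\int|d\w|^2+|\delta\w|^2=0$, so $\w$ is closed and coclosed and vanishes on the boundary. Unique continuation for $d+\delta$ then gives $\w=0$. Ellipticity of the Dirichlet problem then yields the norm equivalence. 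Next, $\mathcal H=\mathcal H_0\oplus\mathcal B$ orthogonally for the inner product $\int\langle\Delta\cdot,\Delta\cdot\rangle$. The map $\w\mapsto\nabla_\nu\w|_{\partial M}$ is an isomorphism from $\mathcal B$ onto $H^{1/2}(\partial M)$, by solvability of the biharmonic Dirichlet problem with data $(0,\psi)$, which is a Lopatinskii-admissible system.

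On $\mathcal B$ the form $B$ is positive definite, since $B(\w,\w)=0$ forces $\w\in\mathcal H_0\cap\mathcal B=\{0\}$. Define $T\colon H^{1/2}(\partial M)\to H^{-1/2}$ as the operator associated with the quadratic form $\|\Delta\,\mathrm{ext}(\psi)\|^2$, where $\mathrm{ext}$ denotes the biharmonic extension. The eigenvalues $q$ are exactly the eigenvalues of $T$ acting on $L^2(\partial M)$. Equivalently, consider the operator $K$ on $\mathcal B$ with respect to the $\Delta$-inner product defined by $B(\w,\eta)=\langle K\w,\eta\rangle_\Delta$. Then $K$ is self-adjoint, positive and injective. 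It is compact because the trace $H^{1/2}\hookrightarrow L^2(\partial M)$ is compact; equivalently, $\nabla_\nu\colon H^2\to L^2(\partial M)$ factors through a compact embedding. The spectral theorem for compact self-adjoint operators then gives eigenvalues $\mu_j\downarrow0$ of finite multiplicity with $q_j=1/\mu_j>0$, nondecreasing and unbounded, because $\mathcal B$ is infinite dimensional. Smoothness of the eigenforms follows from elliptic regularity of the boundary problem, and it is needed to pass back from the weak to the strong formulation. I expect that last step to be routine once Lopatinskii conditions are checked.

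The main obstacle is the exact Green formula: getting the signs and identifying which boundary terms vanish. This requires the identities $\iota^*\delta\eta=-\nu\lrcorner\nabla_\nu\eta$ and $\nu\lrcorner d\eta=\iota^*\nabla_\nu\eta$ for forms vanishing on $\partial M$, so that $B$ really is a positive boundary form controlling the full normal derivative. I will verify these identities first in a local frame adapted to $\nu$, where they become coordinate computations.
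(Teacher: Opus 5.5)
The paper does not prove this statement. It is quoted from \cite{BSFidaGeorgeOlaNicolas} and used as a black box, so there is no in-paper argument to compare yours with. Judged on its own, your proof is correct.

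The Green formula you single out as the main obstacle is exactly \eqref{eq:IPP2} with $\Delta^2\omega=0$ and $\omega'=\eta$ vanishing on $\partial M$, so your signs are right. Since tangential derivatives of $\eta$ vanish on $\partial M$, one has $d\eta=\nu^\flat\wedge\nabla_\nu\eta$ and $\delta\eta=-\nu\lrcorner\nabla_\nu\eta$ there. This gives your two identities and turns the formula into $\int_M\langle\Delta\omega,\Delta\eta\rangle=\int_{\partial M}\langle\Delta\omega,\nabla_\nu\eta\rangle$. In this form the two DBS boundary conditions collapse to the single condition $\Delta\omega=q\nabla_\nu\omega$ on $\partial M$, and your operator $T$ is concretely $\psi\mapsto\Delta(\mathrm{ext}\,\psi)|_{\partial M}$.

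The functional-analytic part is sound:
\begin{itemize}
\item injectivity of $\Delta$ on $H^2\cap H^1_0$, via the same unique continuation result \cite{ColetteAnne} that the paper invokes in \lemref{lm:wnonnul} and \lemref{lm:iotainj};
\item the $\Delta$-orthogonal splitting $\mathcal H=H^2_0\oplus\mathcal B$;
\item the isomorphism $\nabla_\nu\colon\mathcal B\to H^{1/2}(\partial M)$;
\item compactness of $K$ through $H^{1/2}(\partial M)\hookrightarrow L^2(\partial M)$.
\end{itemize}
Eigenvectors of $K$ on $\mathcal B$ give the full weak problem on $\mathcal H$, because both sides of the weak equation vanish for $\eta\in H^2_0$.

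The regularity step you leave as routine is indeed routine, and needs no Lopatinskii check. Set $u=\Delta\omega\in L^2$ and $\psi=\nabla_\nu\omega|_{\partial M}$. The weak equation says $\int_M\langle u,\Delta\eta\rangle=q\int_{\partial M}\langle\psi,\nabla_\nu\eta\rangle$ for all $\eta\in\mathcal H$. Thus $u$ is the very weak solution of $\Delta u=0$, $u|_{\partial M}=q\psi$, which is unique because $\Delta\colon\mathcal H\to L^2$ is onto. Alternating between this Dirichlet problem and $\Delta\omega=u$, $\omega|_{\partial M}=0$ bootstraps $\omega$ from $H^2$ to $C^\infty$ up to the boundary.

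Your reformulation has a side benefit. In flat coordinates $\Delta$ and $\nabla_\nu$ act componentwise on $\omega=\sum_I\omega_I\,dx_I$. So on the ball the DBS problem on $p$-forms splits into $\binom{n}{p}$ copies of the scalar problem \eqref{eq:DBSfct}. This is precisely the multiplicity statement the paper later takes from \cite[Remark 3.7]{BSFidaGeorgeOlaNicolas}.
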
 

We now state the scalar biharmonic Steklov eigenvalue problem with Neumann boundary conditions:
\begin{equation}\label{eq:IntroNBS}
    \left\{
    \hspace{-0.3em}
    \begin{array}{r@{\quad}c@{\quad}l@{\quad}l}
        \Delta^2 f &=& 0 & \text{on } M \\[0.15em]
        \partial_\nu f &=& 0 & \text{on } \partial M \\[0.15em]
        \partial_\nu (\Delta f) - \ell f &=& 0 & \text{on } \partial M
    \end{array}
    \right.
\end{equation}
where $f$ is a smooth function on $M$ and the real number $\ell$ is the eigenvalue of $NBS^{func}$. Its spectrum is discrete, consisting of non-negative eigenvalues. The lowest one is always zero, corresponding to a constant eigenfunction.  A generalization of \eqref{eq:IntroNBS} to the settings of $p$-differential forms is defined in \cite{Article1} and given by: \begin{thm}\label{thm:TheoremeGeneralNBSF3PourLaPreuve} Let $(M^n,g)$ be a compact Riemannian manifold with a smooth boundary $\partial M$ and let $\nu$ be the unit inward normal vector to the boundary. Then the following boundary-value problem: \begin{equation}\label{eq:NBSF3preuve}
    (NBS)
    \left\{
    \hspace{-0.3em}
    \begin{array}{r@{\quad}c@{\quad}l@{\quad}l}
        \Delta^2 \w &=& 0 & \text{on } M \\[0.15em]
        \nu \lrcorner \w &=& 0 & \text{on } \partial M \\[0.15em]
        \nu \lrcorner d\w &=& 0 & \text{on } \partial M \\[0.15em]
        \nu \lrcorner \Delta \w &=& 0 & \text{on } \partial M \\[0.15em]
        \nu \lrcorner d\Delta \w + \ell \iota^* \w &=& 0 & \text{on } \partial M
    \end{array}
    \right.
\end{equation} on $p$-forms, has a discrete spectrum consisting of an unbounded non-decreasing sequence of positive real eigenvalues with finite multiplicities $(\ell_{i,p})_{i\geq0}$. Its kernel is given by $H_{\Abs}(M)$.
\end{thm}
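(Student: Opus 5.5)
The strategy is to realize (NBS) as the spectral problem of a self-adjoint, non-negative operator with compact resolvent acting on boundary data. Concretely, I would build a biharmonic Dirichlet-to-Neumann map $\mathcal{L}$ on tangential forms of $\partial M$ and identify its spectrum with the $\ell_{i,p}$.

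\textbf{Step 1: the harmonic extension.} For a $p$-form $\varphi$ on $\partial M$, I would solve the auxiliary boundary-value problem
\[
\Delta^2\hat\w=0 \text{ on } M,\quad \iota^*\hat\w=\varphi,\quad \nu\lrcorner\hat\w=0,\quad \nu\lrcorner d\hat\w=0,\quad \nu\lrcorner\Delta\hat\w=0 \text{ on }\partial M .
\]
This is a fourth order elliptic system with four boundary conditions. To get existence and uniqueness modulo a finite dimensional kernel, I would prove the Lopatinskii--Shapiro condition, or more practically obtain it variationally by minimizing $\int_M|\Delta\w|^2$ over forms with the prescribed boundary data. Then I would set $\mathcal{L}\varphi:=-\nu\lrcorner d\Delta\hat\w$, so that the eigenvalues of (NBS) are exactly the eigenvalues of $\mathcal{L}$.

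\textbf{Step 2: the Green formula.} Applying Green's formula $\int_M\langle\Delta\alpha,\beta\rangle=\int_M\langle d\alpha,d\beta\rangle+\langle\delta\alpha,\delta\beta\rangle+$ boundary terms twice, with the sign convention for the inward normal, I expect
\[
\int_M\langle\Delta^2\w,\eta\rangle-\langle \w,\Delta^2\eta\rangle=\int_{\partial M}\bigl(\langle\nu\lrcorner d\Delta\w,\iota^*\eta\rangle-\langle \iota^*\w,\nu\lrcorner d\Delta\eta\rangle\bigr)
\]
for forms satisfying $\nu\lrcorner\cdot=\nu\lrcorner d\cdot=\nu\lrcorner\Delta\cdot=0$. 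The other boundary terms pair $\nu\lrcorner\w$, $\nu\lrcorner d\w$, $\nu\lrcorner\Delta\w$ and $\iota^*\delta\Delta\w$-type quantities, and these vanish under the conditions. It follows that $\mathcal{L}$ is symmetric and that
\[
\int_{\partial M}\langle\mathcal{L}\varphi,\varphi\rangle=\int_M|\Delta\hat\w|^2\ge0 .
\]

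\textbf{Step 3: spectrum and kernel.} Since $\mathcal{L}$ is a first-order-gain elliptic pseudodifferential operator (order $3$) on the closed manifold $\partial M$, I would argue that the quadratic form $\int_M|\Delta\w|^2$, restricted to $\{\nu\lrcorner\w=\nu\lrcorner d\w=0\}$ and taken with the $L^2(\partial M)$ norm of $\iota^*\w$, is closed and coercive modulo $L^2$. Compactness of the trace map would then give a discrete, unbounded, non-decreasing spectrum with finite multiplicities, with eigenvalues given by min-max,
\[
\ell_{i,p}=\min_{V}\max_{\w\in V}\int_M|\Delta\w|^2\Big/\int_{\partial M}|\iota^*\w|^2 .
\]

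For the kernel: $\ell=0$ forces $\Delta\w=0$. Then $\int_M|\Delta\w|^2=0$, and Green's formula, together with $\nu\lrcorner\w=0$ and $\nu\lrcorner d\w=0$ on $\partial M$, gives
\[
0=\int_M\langle\Delta\w,\w\rangle=\int_M|d\w|^2+|\delta\w|^2 .
\]
Here the boundary terms $\langle\nu\lrcorner d\w,\iota^*\w\rangle$ and $\langle\delta\w,\nu\lrcorner\w\rangle$ both vanish. Hence $\w\in H_{\Abs}(M)$, and conversely every element of $H_{\Abs}(M)$ satisfies (NBS) with $\ell=0$.

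\textbf{Main obstacle.} The hardest step will be the coercivity in Step 3. One has to show that the form in Step 2 controls the $H^2$ norm of $\w$ modulo lower order terms under only the boundary conditions $\nu\lrcorner\w=\nu\lrcorner d\w=0$ (with $\nu\lrcorner\Delta\w=0$ arising as a natural condition). I would attempt this via a Gaffney-type inequality $\|\w\|_{H^1}^2\lesssim\|d\w\|^2+\|\delta\w\|^2+\|\w\|^2$ for absolute boundary conditions, applied to both $\w$ and $d\w,\delta\w$. Failing that, I would check the Lopatinskii condition directly in half-space coordinates by a symbol computation, splitting forms into tangential and normal parts.
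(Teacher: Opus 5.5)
Your proposal is correct and follows essentially the route behind this statement, which the paper cites from \cite{Article1} and whose ingredients it reuses: the Rayleigh quotient $\int_M|\Delta\omega|^2/\int_{\partial M}|\iota^*\omega|^2$ on biharmonic forms satisfying $\nu\lrcorner\omega=\nu\lrcorner d\omega=\nu\lrcorner\Delta\omega=0$ (the space $\check{Z}$), the Green formula \eqref{eq:IPP2} for symmetry and non-negativity, and the same integration by parts identifying the kernel with $H_{\Abs}(M)$; your boundary Dirichlet-to-Neumann operator is this picture transported by $\iota^*$. Your ``main obstacle'' is not a real one: $\nu\lrcorner\omega=\nu\lrcorner d\omega=0$ are the absolute boundary conditions, for which $\|\omega\|_{H^2}\le C(\|\Delta\omega\|_{L^2}+\|\omega\|_{L^2})$ is the classical elliptic estimate, and the $L^2(M)$ term is removed by a compactness argument using the injectivity of $\iota^*$ on such biharmonic forms (Lemma~\ref{lm:iotainj}, from \eqref{eq:IPP2} and unique continuation), which also makes your extension in Step~1 unique outright rather than merely modulo a finite-dimensional kernel.
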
 In two recent works \cite{Article1, Article2}, we generalized some of Kuttler-Sigillito inequalities to the setting of differential forms. These inequalities basically link the eigenvalues of different boundary-value problems. Initially established for bounded domains in $\mathbb{R}^2$ by J.R. Kuttler and V.G Sigillito \cite{Kuttler&Sigillito}, some of them were later generalized to the setting of compact Riemannian manifolds \cite{Berge, hassannezhadETsiffert}.\vspace*{\baselineskip}

The aim of this paper is to compute the spectrum of the NBS operator \eqref{eq:NBSF3preuve} for differential forms on the Euclidean ball. We obtain the following: \begin{thm}\label{thm:spectreNBS}
  Let us fix $n \ge 3$ and $1 \le p \le n-2$. The spectrum of the NBS operator on the $p$-forms of the unit ball of $\mathbb{R}^n$ is given by: \[\{\ell_{k,p}^{\Ex}\mid k\in\mathbb{N}\}\cup \{\ell_{k,p}^{\Coex}\mid k\in\mathbb{N}\} \] where \begin{equation}\label{eq:1erspectre}
         \ell^{\Coex}_{k,p}=(k+p+1)^2(n+2(k+1))
     \end{equation} corresponds to a NBS-eigenform $\omega_{k,p}^{\Coex}$ such that $\iota^* \omega_{k,p}^{\Coex}$ is a co-exact eigenform of the Laplacian on $\mathbb{S}^{n-1}$ associated to $\boldsymbol{\lambda^{\Coex}_{k,p}}=(k+p+1)(k+n-p-1)$ and
\begin{equation*}
    \ell^{\Ex}_{k,p}=\frac{(2k+n+2)^2(k+p)^2(2k+n+4)}{4k^2+4kn+4k+n^2+4n-4p}
\end{equation*} corresponds to a NBS-eigenform $\omega_{k,p}^{\Ex}$ such that $\iota^* \omega_{k,p}^{\Ex}$ is an exact eigenform of the Laplacian on $\mathbb{S}^{n-1}$ associated to $\boldsymbol{\lambda^{\Ex}_{k,p}}=(k+p)(k+n-p)$. Moreover, the multiplicity of these eigenvalues is given by: \begin{align*}
\dim(\ker(\mathrm{NBS}^{\mathrm{form}} - \ell^{\mathrm{Coex}}_{k,p})) 
&= \dim(\ker(\bar{\Delta}_p - \boldsymbol{\lambda^{\mathrm{Coex}}_{k,p}})) \\
&=\frac{(n + k-3)! (n + 2k-2)}{p!  k! (n - p - 4)!  (n + k - p-3)(k + p + 1)}
\end{align*}

\noindent and

\begin{align*}
\dim(\ker(\mathrm{NBS}^{\mathrm{form}} - \ell^{\mathrm{Ex}}_{k,p})) 
&= \dim(\ker(\bar{\Delta}_p - \boldsymbol{\lambda^{\mathrm{Ex}}_{k,p}})) \\
&= \frac{(n + k-3)! (n + 2k-2)}{(p - 1)!  k!  (n - p-3)!  (n + k - p - 2 )(k + p)}.
\end{align*}\end{thm}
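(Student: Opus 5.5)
Our approach is separation of variables in polar coordinates $x=r\theta$, $r\in(0,1]$, $\theta\in\mathbb{S}^{n-1}$. We use the Hodge decomposition of $p$-forms on the sphere into exact and co-exact eigenforms of $\bar\Delta_p$, whose eigenvalues are $\boldsymbol{\lambda^{\Ex}_{k,p}}=(k+p)(k+n-p)$ and $\boldsymbol{\lambda^{\Coex}_{k,p}}=(k+p+1)(k+n-p-1)$. The harmonic part is absent because $1\le p\le n-2$ forces $H^p(\mathbb{S}^{n-1})=0$.

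Every $p$-form on $B^n$ can be written as $\w=\alpha(r,\theta)+dr\wedge\beta(r,\theta)$ with $\alpha,\beta$ tangential. On each eigenspace we seek NBS eigenforms of one of two types:
\begin{itemize}
\item \emph{Co-exact type:} $\w=a(r)\,\phi$, where $\phi$ is a co-exact eigenform of degree $p$. Here $\delta^{\mathbb{S}}\phi=0$, so no $dr$-component is generated.
\item \emph{Exact type:} $\w=a(r)\,d^{\mathbb{S}}\psi+b(r)\,dr\wedge\psi$, where $\psi$ is a co-exact $(p-1)$-eigenform and $d^{\mathbb{S}}\psi$ is an exact $p$-eigenform with eigenvalue $\boldsymbol{\lambda^{\Ex}_{k,p}}$.
\end{itemize}
We will realize these concretely by homogeneous harmonic polynomial forms. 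Harmonic $p$-forms on $\mathbb{R}^n$ with polynomial coefficients of degree $k$ or $k+1$ are determined by their tangential restriction. Biharmonic forms are then obtained as $h_1+r^2h_2$ with $h_1,h_2$ harmonic (Almansi decomposition for forms, which holds componentwise since $\Delta$ acts on coefficients in Euclidean space).

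For the co-exact case we take $\w=(A r^{m}+B r^{m+2})\,\tilde\phi$. Here $r^m\tilde\phi$ is the homogeneous harmonic extension, which satisfies $\nu\lrcorner\w=0$ and $\delta\w=0$. Using $\Delta(r^2 h)=-(2n+4m)h$ for $h$ harmonic homogeneous of degree $m$ (sign convention $\Delta=d\delta+\delta d$), the quantities $\Delta\w$, $d\w$ and $d\Delta\w$ are explicit.
\begin{itemize}
\item The condition $\nu\lrcorner\Delta\w=0$ holds automatically because everything stays tangential.
\item The condition $\nu\lrcorner d\w=0$ reduces to $\partial_r$ of the coefficient vanishing at $r=1$, i.e. $mA+(m+2)B=0$.
\item The last condition $\nu\lrcorner d\Delta\w=-\ell\,\iota^*\w$ then gives $\ell$ as a ratio.
\end{itemize}
The homogeneity degree $m$ is fixed by matching $\bar\Delta$ to $\boldsymbol{\lambda^{\Coex}_{k,p}}$, which gives $m=k+p+1$ in coefficient-of-$dx$ language. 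We expect this to produce $(k+p+1)^2(n+2k+2)$.

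The exact case is the main obstacle, because $\w$ is not tangential. Both $\nu\lrcorner\w=0$ and $\nu\lrcorner d\w=0$ must be imposed, and these couple the two harmonic pieces. We would try first to take $h_1=d\bigl(r^{k+p}\tilde\psi\bigr)$-type exact harmonic forms plus $r^2h_2$ with $h_2$ either exact or of the form $x\lrcorner(\cdot)$ (the interior product with the position vector $\sum x_i\partial_i$). We would then solve the resulting $2\times 2$ linear system from the two Neumann conditions. The remaining conditions, namely $\nu\lrcorner\Delta\w=0$ and the eigenvalue equation, should then follow from a cancellation. The denominator $4k^2+4kn+4k+n^2+4n-4p$ in $\ell^{\Ex}_{k,p}$ suggests exactly such a determinant, so we would verify the formula by direct computation of that determinant.

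Completeness of the spectrum follows because the tangential restrictions of these families span $L^2$ forms on $\mathbb{S}^{n-1}$, and the variational characterization from \thref{thm:TheoremeGeneralNBSF3PourLaPreuve} together with $H_{\Abs}(B^n)=0$ excludes any other eigenforms. Finally, the multiplicities equal the dimensions of the corresponding $\bar\Delta_p$ eigenspaces, because the construction is an injective linear map from each eigenspace. These dimensions are given by the known formulas (Ikeda–Taniguchi) for exact and co-exact eigenforms on spheres.
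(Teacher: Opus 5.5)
Your overall route matches the paper's: Almansi decomposition, splitting along the spherical Hodge decomposition into a co-exact sector $Q(r)\phi$ and an exact sector $Q(r)\,\bar d\psi+P(r)\,dr\wedge\psi$, and completeness by density of traces. The genuine gap is the exact sector. You only announce it, and the plan you announce fails for two reasons.

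\emph{The ansatz does not contain the eigenforms.} There are no nonzero tangential ($x\lrcorner(\cdot)$-type) harmonic forms with exact angular part. Indeed, the $dr$-component of $\Delta(Q\,\bar d\psi)$ is $-2Qr^{-3}\bar\delta\bar d\psi=-2\lambda Qr^{-3}\psi\neq0$, where $\lambda=(k+p)(k+n-p)$. For fixed $\psi$, the regular harmonic forms are spanned by two families. The first is $h_\alpha=d(r^{k+p}\psi)$. The second is the non-co-closed form $h_\beta=|x|^2dg-(n+2k)\,x^\flat\wedge g$ with $g=r^{k+p}\psi$; in polar form it has $Q=r^{k+p+2}$ and $P=-(k+n-p)r^{k+p+1}$. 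These correspond to the paper's exponents $\alpha_1=k+p$ and $\alpha_2=k+p+2$. On $\mathrm{span}\{h_\alpha,r^2h_\alpha\}$ the boundary conditions force $\omega=0$, so $h_\beta$ is indispensable.

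\emph{The condition $\nu\lrcorner\Delta\omega=0$ is not a cancellation.} Write $\omega=Ah_\alpha+Bh_\beta+r^2(Ch_\alpha+Dh_\beta)$, $\mu=k+p$, $N=n+2k$. The conditions $\nu\lrcorner\omega=0$, $\nu\lrcorner d\omega=0$, $\nu\lrcorner\Delta\omega=0$ become $\mu(A+C)=(N-\mu)(B+D)$, $\mu A+(\mu+2)(B+C)+(\mu+4)D=0$ and $N\mu C=(N+4)(N-\mu)D$. These three equations are independent. The last one is the paper's condition $(2n-4p+4)P_{\omega_1}(1)+4P'_{\omega_1}(1)=0$, which fixes the ratio $-\tfrac{2k+n}{2k+n+4}$ in $P_{\omega_1}$. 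If it held automatically, the eigenvalue equation would have a nonzero solution for every $\ell$. The three conditions cut out a line; normalize $D=N\mu$ on it. Then $\iota^*\omega=-\tfrac{2N(N^2+4N-4\mu)}{\mu(N+2)}\,\bar d\psi$ and $\nu\lrcorner d\Delta\omega=2N\mu(N+2)(N+4)\,\bar d\psi$. This gives $\ell^{\Ex}_{k,p}=\mu^2(N+2)^2(N+4)/(N^2+4N-4\mu)$, the stated value. The denominator is the trace coefficient, not the determinant $\mu(N+2)$ of the two Neumann conditions.

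Two smaller points. First, in the co-exact sector you conflate two different degrees. One is the polar exponent $m=k+p+1$, which makes $r^m\phi$ harmonic and turns $\nu\lrcorner d\omega=0$ into $mA+(m+2)B=0$. The other is the Euclidean coefficient degree $m-p=k+1$, and it is this one that enters $\Delta(r^2h)=-(2n+4(k+1))h$. With the correct factor, $\iota^*\omega=-\tfrac{2B}{m}\phi$ and $\nu\lrcorner d\Delta\omega=(2n+4k+4)mB\,\phi$ give $\ell=(k+p+1)^2(n+2k+2)$. Inserting $m=k+p+1$ into $-(2n+4m)$, as you write, gives the wrong value $(k+p+1)^2(n+2k+2p+2)$.

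Second, your claim that harmonic polynomial forms are determined by their tangential restriction is false: $x^\flat=r\,dr$ is harmonic with $\iota^*x^\flat=0$. What you need is injectivity of $\iota^*$ on NBS eigenforms, as in Lemma~\ref{lm:iotainj}. If $\iota^*\omega=0$, then $\omega_{|\partial M}=0$, so \eqref{eq:IPP2} gives $\Delta\omega=0$ and hence $\omega=0$. Combine this with $\ell(\iota^*\omega,\iota^*\omega_i)=(\Delta\omega,\Delta\omega_i)=\ell_i(\iota^*\omega,\iota^*\omega_i)$ and density of the traces. Together they show every NBS eigenform is a finite combination of your constructed ones. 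This surjectivity is what the multiplicity statement actually requires; injectivity of $\xi\mapsto\omega_\xi$ is trivial.
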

The proof of this theorem relies on several intermediate results, see \secref{sec:sectionNBSSpectre}.
Note that for $ p = 0,$ the expression \eqref{eq:1erspectre} coincides with the one found by Q. Wang and C. Xia in \cite[Theorem 1.5]{WangXia}, we mention this case in the following Theorem:

\begin{thm}\label{thm:spectreNBScasparticulier}
    Let us fix $n \ge 2$.
\begin{itemize}
    \item If $p = 0$, the spectrum of NBS on $0$-forms (i.e., scalar functions) on the unit ball of $\mathbb{R}^n$ is given by
    \[
    \{\ell_{k,0} \mid k \in \mathbb{N}\} = \{0\} \cup \{\ell_{k,0}^{\Coex} \mid k \in \mathbb{N}\}=\{k^2(n+2k)\mid k\in\mathbb{N}\}.
    \]
    The eigenvalue $\ell_{k,0}$ is associated with a NBS eigenfunction $f_{k,0}$ such that ${(f_{k,0})}_{|\mathbb{S}^{n-1}}$ is an eigenfunction of the Laplacian on $\mathbb{S}^{n-1}$ with eigenvalue $\boldsymbol{\lambda_{k,0}}=k(k+n-2)$. Its multiplicity is given by
    \[
    \dim(\ker(NBS^{\text{func}}- \ell_{k,0})) = \dim(\ker(\bar{\Delta}_0 - \boldsymbol{\lambda_k})).
    \]
    \item If $p = n-1$, the spectrum of NBS on $(n-1)$-forms on the unit ball of $\mathbb{R}^n$ is given by \begin{multline*}
\{n^3(n+2)\} \cup \{\ell^{\mathrm{Ex}}_{k,n-1} \mid k \in \mathbb{N}\} \\
= \{n^3(n+2)\} \cup \left\{ \frac{(2k+n+2)^2(k+n-1)^2(2k+n+4)}{4k^2+4kn+4k+n^2+4} \;\middle|\; k \in \mathbb{N} \right\}
\end{multline*}where $n^3(n+2)$ has multiplicity $1$ and is associated to a NBS-eigenform $\bar{\omega}$ such that $\iota^*\bar{\omega}$ is proportional to the volume form of $\mathbb{S}^{n-1}$ and $\ell^{\Ex}_{k,n-1}$ corresponds to a NBS eigenform $\w_{k,n-1}$ such that $\iota^*\w_{k,n-1}$ is an exact eigenform of the Laplacian on $\mathbb{S}^{n-1}$ associated to $\boldsymbol{\lambda^{\Ex}_{k,n-1}}=\boldsymbol{\lambda_{k+1,0}}=(k+1)(k+n-1)$ and has multiplicity
    \[
    \dim(\ker(NBS^{\text{form}} - \ell^{\Ex}_{k,n-1})) = \dim(\ker(\bar{\Delta}_{n-1} - \boldsymbol{\lambda^{\Ex}_{k,n-1}})).
    \]
\end{itemize}
\end{thm}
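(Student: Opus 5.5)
For $p=0$ I would work directly with separation of variables. Let $f$ be an NBS-eigenfunction on the unit ball $B^n$. I expand $f_{|\mathbb{S}^{n-1}}$ in spherical harmonics $Y_k$, which are eigenfunctions of $\bar\Delta_0$ with eigenvalue $\boldsymbol{\lambda_{k,0}}=k(k+n-2)$.

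The biharmonic functions on $B^n$ that are regular at the origin and whose angular part is $Y_k$ are exactly
\[
f=(A+Br^2)r^kY_k .
\]
This follows from the Almansi decomposition $f=h_1+r^2h_2$ with $h_1,h_2$ harmonic. A direct computation gives $\Delta(r^{k+2}Y_k)=-2(2k+n)r^kY_k$ (with the paper's sign convention $\Delta=-\mathrm{div}\,\mathrm{grad}$; the overall sign will be fixed consistently at the end). I then impose the two boundary conditions of \eqref{eq:IntroNBS} at $r=1$, with $\nu=-\partial_r$:
\begin{itemize}
\item $\partial_\nu f=0$ gives $kA+(k+2)B=0$;
\item the condition $\partial_\nu(\Delta f)=\ell f$ is linear in $(A,B)$.
\end{itemize}
For $k\ge1$ this is a $2\times 2$ homogeneous system, and a nontrivial solution exists exactly when $\ell=k^2(n+2k)$. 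The case $k=0$ gives the constants and $\ell=0$.

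Completeness would come from the self-adjointness of NBS (Theorem \ref{thm:TheoremeGeneralNBSF3PourLaPreuve}). Since the restrictions $Y_k$ are dense in $L^2(\mathbb{S}^{n-1})$, the eigenfunctions constructed this way span the eigenspace decomposition. Hence these are all the eigenvalues, and the multiplicity of $\ell_{k,0}$ equals $\dim\ker(\bar\Delta_0-\boldsymbol{\lambda_{k}})$. The map $k\mapsto k^2(n+2k)$ is injective, so no eigenvalues coming from different $k$ coincide.

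For $p=n-1$ I would use the Hodge decomposition of tangential $(n-1)$-forms on $\mathbb{S}^{n-1}$. Since $n-1$ is the top degree of the sphere, there are no nonzero co-exact $(n-1)$-forms. So $\iota^*\omega$ splits into a multiple of the volume form $dv_{\mathbb{S}^{n-1}}$ plus an exact part $d\beta$.

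The exact part is handled by the same intermediate results of \secref{sec:sectionNBSSpectre} that give $\ell^{\Ex}_{k,p}$ in Theorem \ref{thm:spectreNBS}. I would re-check that their proofs do not use $p\le n-2$ for the exact branch. Substituting $p=n-1$ then gives the displayed formula. The Hodge star on the sphere identifies exact $(n-1)$-eigenforms with non-constant functions, so $\boldsymbol{\lambda^{\Ex}_{k,n-1}}=\boldsymbol{\lambda_{k+1,0}}$ and the multiplicities follow.

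For the harmonic part I make the radial ansatz
\[
\omega=\varphi(r)\,X\lrcorner dv_{\mathbb{R}^n},\qquad X=\sum x_i\partial_i .
\]
With this ansatz, $\nu\lrcorner\omega=0$ holds automatically and $d\omega=(r\varphi'+n\varphi)\,dv_{\mathbb{R}^n}$. Moreover $\Delta\omega=\widetilde{\varphi}\,X\lrcorner dv_{\mathbb{R}^n}$ for an explicit radial Euler-type operator $\varphi\mapsto\widetilde{\varphi}$. Solving $\Delta^2\omega=0$ with regularity at $0$ gives $\varphi=A+Br^2$. Then:
\begin{itemize}
\item $\nu\lrcorner d\omega=0$ at $r=1$ gives $nA+(n+2)B=0$;
\item $\nu\lrcorner\Delta\omega=0$ is automatic;
\item the last condition of \eqref{eq:NBSF3preuve} yields $\ell=n^3(n+2)$.
\end{itemize}
This eigenvalue is simple, because $dv_{\mathbb{S}^{n-1}}$ spans the harmonic $(n-1)$-forms of the sphere. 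Completeness again follows from self-adjointness and the density of the boundary decomposition.

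The main obstacle I expect is the sign and normalization bookkeeping in the computation of $\nu\lrcorner d\Delta\omega$ for the radial ansatz, where the exact constant $n^3(n+2)$ must come out. A second point to watch is verifying that the exact-branch lemmas from the general case genuinely extend to $p=n-1$.
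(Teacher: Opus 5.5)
Your strategy matches the paper's. For each Laplace eigenform on $\mathbb{S}^{n-1}$ you build one NBS eigenform with that trace: the modes $(A+Br^2)r^kY_k$ for $p=0$, and for $p=n-1$ the exact branch of the general case plus a separate volume-form solution. Completeness and multiplicities then follow from three facts: $\Delta$-orthogonality of eigenforms for distinct eigenvalues, density of the traces, and injectivity of $\iota^*$ on eigenforms. State the last one explicitly; it is Lemma~\ref{lm:iotainj}. Your radial ansatz is also the paper's choice, since $r^n\,dv_{\mathbb{S}^{n-1}}=X\lrcorner dv_{\mathbb{R}^n}$ when the sphere's volume form is extended independently of $r$.

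The step you flagged does fail: the last boundary condition does not produce $n^3(n+2)$, and no sign or normalization bookkeeping will make it do so. Carry out your own computation with $\varphi=A+Br^2$:
\begin{itemize}
\item The components of $\omega$ are $\pm\varphi x_i$, so $\Delta\omega=-2(n+2)B\,X\lrcorner dv_{\mathbb{R}^n}$ and $d\Delta\omega=-2n(n+2)B\,dv_{\mathbb{R}^n}$.
\item At $r=1$, with $\nu=-X$, this gives $\nu\lrcorner d\Delta\omega=2n(n+2)B\,\iota^*(X\lrcorner dv_{\mathbb{R}^n})$.
\item Also $\iota^*\omega=(A+B)\,\iota^*(X\lrcorner dv_{\mathbb{R}^n})$, and $A+B=-2B/n$ by $nA+(n+2)B=0$.
\end{itemize}
Hence $\ell=n^2(n+2)$.

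This agrees with the rest of the paper in three ways:
\begin{itemize}
\item It is $\ell^{\Coex}_{0,n-1}=(0+(n-1)+1)^2(n+2)$ from the general co-exact formula.
\item It is what the paper's own data give. Inserting $p=n-1$, $Q_{\omega_2}=r^n$, $Q_{\omega_1}=nr^n$, $\xi_{\omega_1}=\xi_{\omega_2}$ into \eqref{eq:condition4} yields $2\ell=(8-2n)n^2+4n^2(n-1)=2n^2(n+2)$.
\item For $n=2$ you can check by hand that $\omega=(r^2-2)(x\,dy-y\,dx)$ satisfies all the conditions with $\ell=16$, not $32$.
\end{itemize}
So the constant $n^3(n+2)$ in the statement is a misprint. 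Your argument, like the paper's, proves the statement with $n^2(n+2)$ in its place, and you should present it that way rather than try to force $n^3(n+2)$.

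Two smaller points.
\begin{itemize}
\item For $p=0$, with your conventions ($\Delta=-\mathrm{div}\,\mathrm{grad}$, $\nu=-\partial_r$), the condition $\partial_\nu(\Delta f)=\ell f$ as you wrote it gives $\ell=-k^2(n+2k)$. The positive value comes from the $p=0$ case of \eqref{eq:NBSF3preuve}, namely $\partial_\nu(\Delta f)+\ell f=0$. This is a convention mismatch in the paper, not in your method. Your modes $(A+Br^2)r^kY_k$ are in fact more accurate than the paper's displayed $f_k$, whose factor $r^{k+1}$ should be $r^k$ for a degree-$k$ harmonic.
\item Simplicity of the volume-form eigenvalue, and equality of multiplicities in the exact branch, need more than the fact that $dv_{\mathbb{S}^{n-1}}$ spans the harmonic $(n-1)$-forms. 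You also need $n^2(n+2)\neq\ell^{\Ex}_{k,n-1}$ for all $k$, and $k\mapsto\ell^{\Ex}_{k,n-1}$ injective, just as you checked injectivity of $k\mapsto k^2(n+2k)$ for $p=0$. Both hold. For $n=2$ the exact values are $2(k+1)(k+2)(k+3)=12,48,\dots$. For $n\ge3$, $\ell^{\Ex}_{0,n-1}>n^2(n+2)$ and $\ell^{\Ex}_{k,n-1}$ is increasing in $k$.
\end{itemize}
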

 Afterwards, we test how far the following Kuttler-Sigillito inequality is from being optimal, we recall:

\begin{thm}\cite{Article1}\label{thm:proofQSigma<l}
    Let $(M^n, g)$ be a compact Riemannian manifold with smooth boundary $\partial M$. The following inequality holds between the eigenvalues of the Steklov, DBS, and NBS problems for differential forms:
    \begin{equation}\label{eq:q1sigma1<l1PourLaPreuve}
    q_{1,p} \sigma_{1,p}^2 < \ell_{1,p}.
    \end{equation}
\end{thm}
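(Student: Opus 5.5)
This inequality is cited from \cite{Article1}; here is how I would establish it via variational characterizations of the three problems.

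The first step is to record the three min-max principles. For the Steklov problem \eqref{eq:steklovFormes},
\[
\sigma_{1,p}=\inf\Big\{\frac{\int_M |d\w|^2+|\delta\w|^2}{\int_{\partial M}|\iota^*\w|^2} : \nu\lrcorner\w=0,\ \iota^*\w\perp \iota^* H_{\Abs}(M)\Big\}.
\]
For DBS, $q_{1,p}=\inf\{\int_M|\Delta\w|^2 / \int_{\partial M}(|\nu\lrcorner d\w|^2+|\iota^*\delta\w|^2) : \w=0 \text{ on } \partial M\}$. For NBS, Theorem \ref{thm:TheoremeGeneralNBSF3PourLaPreuve} gives
\[
\ell_{1,p}=\inf\Big\{\frac{\int_M|\Delta\w|^2}{\int_{\partial M}|\iota^*\w|^2} : \nu\lrcorner\w=0,\ \nu\lrcorner d\w=0,\ \w\perp_{\partial M} H_{\Abs}(M)\Big\}.
\]
Green's formula for $\Delta=d\delta+\delta d$, combined with the boundary conditions in \eqref{eq:NBSF3preuve}, identifies the Rayleigh quotient of NBS.

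The second step is to take a first NBS eigenform $\w$ and decompose it as $\w=h+u$. Here $h$ is the harmonic extension with $\Delta h=0$, $\nu\lrcorner h=0$ and $\nu\lrcorner dh$ chosen so that $h$ is a Steklov-type extension, and $u$ satisfies $u=0$ on $\partial M$. This is the split used in Kuttler–Sigillito's scalar proof. Since $\Delta\w=\Delta u$, the form $u$ is admissible for the DBS quotient, so
\[
\int_M|\Delta\w|^2\ge q_{1,p}\int_{\partial M}\big(|\nu\lrcorner du|^2+|\iota^*\delta u|^2\big).
\]
The boundary quantities of $u$ equal minus those of $h$, because $\w$ satisfies $\nu\lrcorner d\w=0$ and the matching condition on $\delta$.

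The third step links these boundary quantities to $\sigma_{1,p}$. Integrating by parts gives
\[
\int_M(|dh|^2+|\delta h|^2)=\int_{\partial M}\langle \iota^*h,\nu\lrcorner dh\rangle.
\]
Cauchy–Schwarz then yields
\[
\sigma_{1,p}\|\iota^*h\|^2\le \|\iota^*h\|\,\|\nu\lrcorner dh\|, \quad\text{so}\quad \|\nu\lrcorner dh\|^2\ge\sigma_{1,p}^2\|\iota^*h\|^2=\sigma_{1,p}^2\|\iota^*\w\|^2.
\]
For this to apply, $\iota^*\w$ must be orthogonal to the Steklov kernel $H_{\Abs}(M)$, which holds for $\ell_{1,p}$ eigenforms. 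Combining the steps gives $\ell_{1,p}\ge q_{1,p}\sigma_{1,p}^2$.

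The last step is strictness. Equality would force equality in both Cauchy–Schwarz and DBS minimality. That would make $u$ a DBS eigenform and $h$ a Steklov eigenform with $\nu\lrcorner dh$ proportional to $\iota^*h$. Feeding these back into the NBS boundary conditions should give $\w\equiv 0$ on $\partial M$, a contradiction, via unique continuation for $\Delta^2$.

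I expect the main obstacle to be choosing the decomposition correctly for forms. The DBS boundary term involves both $\nu\lrcorner du$ and $\iota^*\delta u$, while NBS only imposes $\nu\lrcorner d\w=0$. I would first try defining $h$ as the solution of the Steklov-type problem with $\iota^*h=\iota^*\w$, $\nu\lrcorner h=0$ and $\Delta h=0$, and then check that $\iota^*\delta u$ contributes nonnegatively.
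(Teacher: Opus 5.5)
The paper does not prove this statement; it quotes it from \cite{Article1} and only checks it on the ball, where the explicit spectra give the ratio $\frac{n^2+4n-4p}{n(n+4)}$ or $\frac{n}{n+2}$. Judged on its own, your argument proves $\ell_{1,p}\ge q_{1,p}\sigma_{1,p}^2$ but not the strict inequality.

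\textbf{The non-strict inequality.} This part is sound, and the ``main obstacle'' you flag is not one. NBS imposes nothing on $\iota^*\delta\omega$, so there is no ``matching condition on $\delta$''. The boundary data of $u$ are therefore not minus those of $h$, but you never need this. With $h$ the Raulot--Savo extension ($\Delta h=0$, $\nu\lrcorner h=0$, $\iota^*h=\iota^*\omega$), the form $u=\omega-h$ vanishes on $\partial M$. The identity $\nu\lrcorner du=-\nu\lrcorner dh$ follows from $\nu\lrcorner d\omega=0$ alone, and the term $q_{1,p}\int_{\partial M}|\iota^*\delta u|^2\ge 0$ is simply discarded. The orthogonality $\iota^*\omega\perp\iota^*H_{\Abs}(M)$ comes from testing the NBS weak form against $H_{\Abs}(M)$, using $\ell_{1,p}\neq 0$. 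With the inward normal, Green's formula reads $\int_M|dh|^2+|\delta h|^2=-\int_{\partial M}\langle\iota^*h,\nu\lrcorner dh\rangle$; the sign is irrelevant after Cauchy--Schwarz.

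\textbf{The gap: strictness.} Feeding the equality conditions back into NBS yields no contradiction.
\begin{itemize}
\item In the equality case, $\iota^*\delta u=0$, $u$ is a $q_{1,p}$-DBS eigenform, and $h$ is a $\sigma_{1,p}$-Steklov eigenform.
\item Then $v=\Delta\omega=\Delta u$ is harmonic with $\nu\lrcorner v=0$ and $\iota^*v=q_{1,p}\,\nu\lrcorner du=q_{1,p}\sigma_{1,p}\iota^*h$. Hence $v=q_{1,p}\sigma_{1,p}h$, and the last NBS condition merely returns $\ell=q_{1,p}\sigma_{1,p}^2$.
\item Conversely, let $h$ be a $\sigma_{1,p}$-Steklov eigenform and let $u$ solve $\Delta u=q_{1,p}\sigma_{1,p}h$, $u_{|\partial M}=0$, with moreover $\nu\lrcorner du=\sigma_{1,p}\iota^*h$ and $\iota^*\delta u=0$. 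Then $h+u$ satisfies all five NBS conditions with eigenvalue $q_{1,p}\sigma_{1,p}^2$.
\end{itemize}
So the boundary conditions are automatically consistent, and unique continuation has nothing to act on.

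What must actually be excluded is that some first Steklov eigenform is proportional to $\Delta u$ for a first DBS eigenform $u$. Equivalently, it must not minimize $\int_{\partial M}|\phi|^2/\int_M|\phi|^2$ over harmonic $\phi$; by Green's formula and Cauchy--Schwarz this infimum equals $q_{1,p}$.

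For $p=0$ this is excluded by positivity. Replace the boundary values of a minimizer by their absolute values; the maximum principle then shows that minimizers have a sign. A $\sigma_1$-Steklov eigenfunction, by contrast, has zero boundary mean. For $p\ge 1$ there is no maximum principle, and your proposal supplies no substitute. The strict inequality therefore remains unproved by your argument.
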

This last theorem generalizes \cite[Inequality II]{Kuttler&Sigillito}.
We get the following result: \begin{thm}
For $n\geq3$ and $p\in\{1,\cdots,n-2\}$, we have the following relation on the Euclidean ball:
\begin{equation*}
\frac{q_{1,p} \sigma_{1,p}^2}{ \ell_{1,p}} = 
\begin{cases} 
\dfrac{n^2+4n-4p}{n(n+4)} & \text{if } 1\leq p \leq \frac{n}{2} \\[2ex]
\dfrac{n}{n+2} & \text{if } p > \frac{n}{2}.
\end{cases}
\end{equation*} 
\end{thm}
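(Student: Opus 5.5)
The plan is to evaluate the three quantities $q_{1,p}$, $\sigma_{1,p}$ and $\ell_{1,p}$ separately on the unit ball of $\mathbb{R}^n$ and then take the quotient. For $\ell_{1,p}$ I will use \thref{thm:spectreNBS}. For the other two I will import the known results on the ball. For the Steklov problem \eqref{eq:steklovFormes} I will use the computation of Raulot and Savo \cite{RaulotSavo}. As I recall it, it gives
\[
\sigma_{1,p}=\min\Big\{\tfrac{(n+2)p}{n},\,p+1\Big\},
\]
which equals $\frac{(n+2)p}{n}$ if $p\le \frac n2$ and $p+1$ if $p>\frac n2$. For the DBS problem of \thref{thm:DBSTHM} I will use the computation on the ball in \cite{BSFidaGeorgeOlaNicolas}. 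I expect it to give $q_{1,p}=n$ for $1\le p\le n-2$, consistent with the scalar value $q_1=n$. Since the statement to prove is of the stated form only if these values hold, I will check both carefully against the cited sources (including the range of $p$) before using them.

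The main work is to identify $\ell_{1,p}$, the smallest eigenvalue of NBS. On the ball with $1\le p\le n-2$ we have $H_{\Abs}(M)=0$, so by \thref{thm:TheoremeGeneralNBSF3PourLaPreuve} the kernel is trivial. Hence $\ell_{1,p}$ is the minimum over $k\in\mathbb{N}$ of both families $\ell^{\Ex}_{k,p}$ and $\ell^{\Coex}_{k,p}$ of \thref{thm:spectreNBS}.

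First I show that each family is increasing in $k$. For $\ell^{\Coex}_{k,p}=(k+p+1)^2(n+2k+2)$ this is obvious. For $\ell^{\Ex}_{k,p}$ I will argue as follows:
\begin{itemize}
\item the numerator $(2k+n+2)^2(k+p)^2(2k+n+4)$ grows like $k^5$;
\item the denominator $4k^2+4kn+4k+n^2+4n-4p$ grows like $k^2$;
\item either a direct difference computation $\ell^{\Ex}_{k+1,p}-\ell^{\Ex}_{k,p}>0$, or the bound $\frac{(2k+n+2)^2}{4k^2+4k(n+1)+n^2+4n-4p}\ge$ const together with the increasing factor $(k+p)^2(2k+n+4)$, should suffice.
\end{itemize}
So the candidates are $k=0$:
\[
\ell^{\Coex}_{0,p}=(p+1)^2(n+2),\qquad \ell^{\Ex}_{0,p}=\frac{(n+2)^2p^2(n+4)}{n^2+4n-4p}.
\]

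Next I compare these two candidates. The inequality $\ell^{\Ex}_{0,p}\le\ell^{\Coex}_{0,p}$ is equivalent to
\[
(n+2)(n+4)p^2\le (p+1)^2(n^2+4n-4p).
\]
Expanding, this becomes a polynomial inequality in $p$ with a sign change at $p=\frac n2$. I expect it to factor, possibly as $(n-2p)$ times a positive term, and will verify this by direct algebra. This comparison is the main obstacle, because the threshold must match exactly the threshold $p=\frac n2$ of $\sigma_{1,p}$. Granting it, the two cases follow.
\begin{itemize}
\item If $p\le \frac n2$, then
\[
\frac{q_{1,p}\sigma_{1,p}^2}{\ell_{1,p}}=\frac{n\cdot \frac{(n+2)^2p^2}{n^2}\,(n^2+4n-4p)}{(n+2)^2p^2(n+4)}=\frac{n^2+4n-4p}{n(n+4)}.
\]
\item If $p>\frac n2$, then
\[
\frac{q_{1,p}\sigma_{1,p}^2}{\ell_{1,p}}=\frac{n(p+1)^2}{(p+1)^2(n+2)}=\frac{n}{n+2}.
\]
\end{itemize}
This gives the claimed formula.
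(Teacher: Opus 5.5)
Your route is the paper's route. You take $q_{1,p}=n$ from \thref{thm:spectreDBS} together with the remark that DBS on forms and on functions has the same eigenvalues. You take $\sigma_{1,p}$ from \thref{thm:spectreSteklov}; that is \cite{RaulotSavo3}, not \cite{RaulotSavo}. You take $\ell_{1,p}=\min\{\ell^{\Ex}_{0,p},\ell^{\Coex}_{0,p}\}$ from \thref{thm:spectreNBS}, and the quotient then follows in one line. The paper asserts both the reduction to $k=0$ and the $\Ex$/$\Coex$ case split for $\ell$ without argument, so you are right to treat the comparison as the real content.

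\textbf{The comparison step fails as you plan it.} Put $f(p)=(p+1)^2(n^2+4n-4p)-(n+2)(n+4)p^2$. Then
\[
f(p)=-4p^3-(2n+16)p^2+(2n^2+8n-4)p+n^2+4n,\qquad f\bigl(\tfrac n2\bigr)=n(n+2)\neq 0 .
\]
So there is no factor $(n-2p)$, and as a function of a real variable $f$ does not change sign at $n/2$. For $n$ even and $p=n/2$ you have $\sigma^{\Ex}_{1,p}=\sigma^{\Coex}_{1,p}$ but $\ell^{\Ex}_{0,p}<\ell^{\Coex}_{0,p}$ strictly. The two thresholds agree only on integers.

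A correct argument goes as follows.
\begin{enumerate}
\item Note $f''(p)=-24p-4n-32<0$ for $p\ge 0$, so $f$ is concave on $[0,\infty)$.
\item Since $f(0)=n^2+4n>0$ and $f(n/2)>0$, concavity gives $f>0$ on $[0,\tfrac n2]$.
\item Since $f\bigl(\tfrac{n+1}{2}\bigr)=-\tfrac12(n^2+8n+13)<0<f\bigl(\tfrac n2\bigr)$, concavity gives $f<0$ on $[\tfrac{n+1}{2},\infty)$.
\item An integer $p>\tfrac n2$ satisfies $p\ge\tfrac{n+1}{2}$, which gives exactly your case split. Dividing by $n^2+4n-4p$ is harmless, since it is positive for $p\le n-2$.
\end{enumerate}

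\textbf{Reduction to $k=0$.} Your second option does not prove that $\ell^{\Ex}_{k,p}$ increases in $k$. That option bounds $(2k+n+2)^2/(4k^2+4kn+4k+n^2+4n-4p)$ below by a constant and multiplies by the increasing factor $(k+p)^2(2k+n+4)$. But this ratio can decrease in $k$: for $n=3$, $p=1$ it goes from $25/17$ at $k=0$ to $49/37$ at $k=1$.

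You do not need monotonicity. The denominator equals $(2k+n+2)^2-4(k+p+1)$, which lies strictly between $0$ and $(2k+n+2)^2$. Hence for $k\ge 1$
\[
\ell^{\Ex}_{k,p}>(k+p)^2(2k+n+4)\ge (p+1)^2(n+6)>(p+1)^2(n+2)=\ell^{\Coex}_{0,p}.
\]
Together with the obvious monotonicity of $\ell^{\Coex}_{k,p}$ in $k$, this gives $\ell_{1,p}=\min\{\ell^{\Ex}_{0,p},\ell^{\Coex}_{0,p}\}$. With these two repairs, the final quotients you computed are correct.
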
 Therefore, we recover $q_{1,p}\sigma_{1,p}^2<\ell_{1,p}.$ See \secref{sec:KSTEST} for the proof.

\textbf{Outline of the Article:} In \secref{sec:introduction} of this paper, we recall some well-known boundary value problems and give the main results of this work. In \secref{sec:Spectre}, we review the spectrum of the $p$-Laplacian on the sphere and determine the spectrum of the NBS operator on the Euclidean ball, which is the main result of the paper. Finally, in \secref{sec:Test}, we remind the spectra of Steklov and DBS operators on the Euclidean ball and then test how far a Kuttler-Sigillito inequality is from being optimal.

\textbf{Acknowledgment:} I would like to thank my PhD supervisors, Nicolas Ginoux, Georges Habib and Samuel Tapie, for their help, support, and the valuable discussions we have had. I also acknowledge the support of the Grant ANR-24-CE40-0702 ORBISCAR.

\textit{This work was done and written independently, under the direction of my thesis supervisors. No generative artificial intelligence was used to generate results or provide ideas, with the exception of solving the system \eqref{eq:systemenonmatrice} and improving the phrasing and syntax in a few places. I claim full ownership of the work and proofs presented in this document, and I assume full scientific responsibility for them.}

\section{Spectral geometry of the Euclidean ball}\label{sec:Spectre}
In this section, we recall some classical results on the spectra of the Laplacian on the Euclidean ball. Later, we explicitly determine the spectrum of the NBS operator. Our interest in computing this spectrum, as mentioned in the introduction, lies in testing the Kuttler-Sigillito inequalities in this specific geometry. An introductory survey on the classical case where the Laplacian acts on scalar functions (0-forms) can be found in \cite{Chavel, VitoCruz}. Following \cite[p.185]{NagyPaul}, we list in the table below the few Riemannian manifolds $(M^n, g)$ without boundary for which the spectrum of the Laplacian on $p$-forms with $1 \leq p \leq n-1$ is explicitly known: 
\begin{table}[H]
    \centering
    \begin{tabular}{|r|c|}
        \hline
         flat tori & \cite{BergerGauduchonMazet} \\
        \hline
         the standard sphere & \cite{GallotMeyer} \\
        \hline
         the canonical complex projective space & \cite{AkiraYoshiharu} \\
        \hline
         compact Lie groups with a bi-invariant metric & \cite{Wallach} \\
        \hline
        compact semisimple Lie groups & \cite{BeersMillman} \\
        \hline
         Heisenberg manifolds, for $\mathbb{S}^1$-invariant $1$-forms & \cite{Heisenberg} \\
        \hline
    \end{tabular}
    \caption*{Table: Known spectra on the sphere}
\end{table} 

\subsection{Spectrum of the \texorpdfstring{$p$}{p}-Laplace operator on the sphere}\label{sec:RappelSpectre}
 We denote by $\bar{\Delta}_p$ the Laplacian on $p$-forms on the sphere. When there is no ambiguity, we simply write $\bar{\Delta}$. The following Theorem is well known: \begin{thm}\label{thm:vplaplacienFCT}
    For $n \geq 1$, the spectrum of the Laplace-Beltrami operator on scalar functions is given by
\[
\{\lambda_{k,0} \mid k \in \mathbb{N}\} = \{k(k+n-2) \mid k \in \mathbb{N}\}
\]
with multiplicity \[\frac{n+2k-2}{n+k-2}\cdot\binom{n+k-2}{k}.\]

\end{thm}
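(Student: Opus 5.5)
The plan is to use spherical harmonics. The first step is to identify the eigenfunctions of $\bar{\Delta}_0=\delta d$ on $\mathbb{S}^{n-1}\subset\mathbb{R}^n$ with restrictions of harmonic homogeneous polynomials. Let $\mathcal{H}_k$ be the space of homogeneous polynomials $P$ of degree $k$ on $\mathbb{R}^n$ with $\Delta_{\mathbb{R}^n}P=0$. Writing $x=r\theta$, the Euclidean Laplacian (with the geometer's sign convention $\Delta=-\sum\partial_i^2$) splits as
\[
\Delta_{\mathbb{R}^n} = -\partial_r^2-\frac{n-1}{r}\partial_r+\frac{1}{r^2}\bar{\Delta}_0 .
\]
Applied to $P=r^k Y(\theta)$, harmonicity gives $\bar{\Delta}_0 Y = k(k+n-2)Y$. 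So every restriction of an element of $\mathcal{H}_k$ is an eigenfunction with eigenvalue $\lambda_{k,0}=k(k+n-2)$.

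The second step is completeness and separation of eigenvalues. I would use the classical decomposition $\mathcal{P}_k=\mathcal{H}_k\oplus |x|^2\mathcal{P}_{k-2}$, proved by showing that $|x|^2\mathcal{P}_{k-2}$ is the orthogonal complement of $\mathcal{H}_k$ for the Fischer inner product $\langle P,Q\rangle=P(\partial)\bar Q$. Iterating it, every polynomial restricted to the sphere is a finite sum of elements of the $\mathcal{H}_j|_{\mathbb{S}^{n-1}}$. By Stone--Weierstrass, polynomials are dense in $C^0(\mathbb{S}^{n-1})$, hence in $L^2$. The spaces $\mathcal{H}_j|_{\mathbb{S}}$ are mutually $L^2$-orthogonal, because $\bar{\Delta}_0$ is self-adjoint and the values $k(k+n-2)$ are strictly increasing in $k$ for $n\geq2$. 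Together these give $L^2(\mathbb{S}^{n-1})=\bigoplus_k \mathcal{H}_k|_{\mathbb{S}}$. An eigenfunction with eigenvalue $\lambda$ is orthogonal to every summand with a different eigenvalue, so it lies in the $\mathcal{H}_k$ with $k(k+n-2)=\lambda$. Hence the spectrum is exactly $\{k(k+n-2)\}$ and the eigenspace for index $k$ is $\mathcal{H}_k|_{\mathbb{S}}$. Restriction is injective on $\mathcal{H}_k$ by homogeneity.

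The third step computes the multiplicity. The decomposition gives
\[
\dim\mathcal{H}_k=\dim\mathcal{P}_k-\dim\mathcal{P}_{k-2}=\binom{n+k-1}{k}-\binom{n+k-3}{k-2},
\]
and this simplifies to $\frac{n+2k-2}{n+k-2}\binom{n+k-2}{k}$. The case $n=1$ is degenerate: there $\mathbb{S}^0$ consists of two points and the formula has to be read with the usual conventions, so I would exclude it or treat it separately.

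The main obstacle is the decomposition $\mathcal{P}_k=\mathcal{H}_k\oplus|x|^2\mathcal{P}_{k-2}$. The key identity is the Fischer adjointness $\langle |x|^2 Q,P\rangle=\langle Q,\Delta_{\mathbb{R}^n} P\rangle$, up to sign. It shows that $P\perp |x|^2\mathcal{P}_{k-2}$ exactly when $\Delta P=0$, which yields both the directness and the dimension count. Since the statement is classical and the paper calls it well known, a reference to \cite{GallotMeyer} or \cite{Chavel} would also suffice.
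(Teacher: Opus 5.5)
Your proof is correct and is the classical spherical-harmonics argument: the decomposition $\mathcal{P}_k=\mathcal{H}_k\oplus|x|^2\mathcal{P}_{k-2}$ via the Fischer product, Stone--Weierstrass density, and the count $\dim\mathcal{P}_k-\dim\mathcal{P}_{k-2}$. This is the argument in \cite{BergerGauduchonMazet} and \cite{Chavel}, to which the paper simply defers without giving a proof of its own. Besides the degenerate case $n=1$ that you already flag, note that the closed form $\frac{n+2k-2}{n+k-2}\binom{n+k-2}{k}$ is $0/0$ at $(n,k)=(2,0)$, where your expression $\binom{n+k-1}{k}-\binom{n+k-3}{k-2}$ correctly gives $1$.
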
 A proof can for instance be found in \cite[Chapter 2, p:35]{Chavel} and \cite[Chapter 3, Proposition C.I.1, Corollary C.I.3]{BergerGauduchonMazet}. By Hodge duality, the spectrum of $\bar{\Delta}$ on $(n-1)$-forms is the same, with same multiplicity. For $n\geq2$ and $p\in\{1,\cdots,n-2\},$ the analogous Theorem has been shown by S. Gallot and D. Meyer and later by I. Iwasaki and K. Katase:
\begin{thm}\label{thm:vplaplacien}\cite[Theorem, p.283]{GallotMeyer}, \cite[Theorem 6]{IwasakiKiyoshi}
       For $n\geq3$ and $p\in\{1,\cdots,n-2\}$, the eigenvalues of the Laplacian $\bar{\Delta}$ on $\mathbb{S}^{n-1}$ are given by: \[\{\boldsymbol{\lambda^{\Ex}_{k,p}}\mid k\in\mathbb{N}\}\cup \{\boldsymbol{\lambda^{\Coex}_{k,p}}\mid k\in\mathbb{N}\} \] where
\begin{equation}\label{eq:vpexacte}
\boldsymbol{\lambda^{\Ex}_{k,p}} = (k + p)(k+n-p), \quad k = 0, 1, 2, \dots
\end{equation}correspond to the exact $p$-forms and
\begin{equation}
\boldsymbol{\lambda^{\Coex}_{k',p}} = (k' + p + 1)(k'+n-p-1), \quad k' = 0, 1, 2, \dots
\end{equation} correspond to the co-exact $p$-forms.
\end{thm}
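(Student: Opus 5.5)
Since this result is cited from Gallot--Meyer and Iwasaki--Katase, the plan below reconstructs the classical argument rather than citing it.

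The approach uses the cone construction: realize $p$-forms on $\mathbb{S}^{n-1}$ as restrictions of homogeneous polynomial forms on $\mathbb{R}^n$. Let $\mathcal{P}_{k,p}$ be the space of $p$-forms on $\mathbb{R}^n$ whose coefficients are homogeneous polynomials of degree $k$. Let $\mathcal{H}_{k,p}\subset\mathcal{P}_{k,p}$ be the subspace of harmonic, coclosed forms satisfying $X\lrcorner\omega=0$, where $X=r\partial_r$ is the position vector field. For such $\omega$, the pullback $\iota^*\omega$ to the sphere is coclosed. A direct computation in polar coordinates, $g=dr^2+r^2\bar g$, gives
\[
\bar\Delta\,\iota^*\omega=(k+p)(k+n-p-2)\,\iota^*\omega .
\]
This is the coexact family. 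Writing $k=k'+1$, it reads $(k'+p+1)(k'+n-p-1)$, and $k'\ge 0$ because $k=0$ together with $X\lrcorner\omega=0$ forces $\omega=0$ when $p\geq 1$.

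The exact family is obtained by applying the sphere differential $\bar d$ to coexact $(p-1)$-eigenforms, since $\bar d$ commutes with $\bar\Delta$. For $p-1\ge1$ this gives eigenvalues $(k'+p)(k'+n-p)$. For $p-1=0$ one applies $\bar d$ to spherical harmonics of degree $k'+1$, whose eigenvalue is $(k'+1)(k'+n-1)$. Both cases match $\boldsymbol{\lambda^{\Ex}_{k',p}}$. The map $\bar d$ is injective on coexact forms with nonzero eigenvalue, and the cohomology of $\mathbb{S}^{n-1}$ vanishes in degrees $1\le p\le n-2$. Hence the Hodge decomposition $\Omega^p(\mathbb{S}^{n-1})=\bar d\Omega^{p-1}\oplus\bar\delta\Omega^{p+1}$ holds with no harmonic part, and the spectrum is exactly the union of the two families.

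The key step, and the main obstacle, is \emph{completeness}: every coexact eigenform must arise from some $\mathcal{H}_{k,p}$. I would prove this by density. Polynomial forms restricted to the sphere are dense in $L^2$ by Stone--Weierstrass. Next, decompose $\mathcal{P}_{k,p}$ in the Fischer style as
\[
\mathcal{P}_{k,p}=\mathcal{H}_{k,p}\oplus d(\cdots)\oplus r^2(\cdots)\oplus dr\wedge(\cdots),
\]
using the $\mathfrak{sl}_2$-triple generated by $r^2$, $\Delta$ and the Euler operator, together with the operators $X\lrcorner$ and $dr^2\wedge$. Taking the coexact part of each summand after restriction shows that the restrictions of $\bigoplus_k\mathcal{H}_{k,p}$ are dense in the coexact forms. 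Distinct $k$ give distinct eigenvalues, so no other eigenvalues can occur. The delicate point is verifying that the Hodge projection does not mix degrees.

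For the radial computation I would use the standard warped-product formulas. For a form $\omega=r^{k+p}\bar\alpha$ with $\partial_r\lrcorner\bar\alpha=0$, one has
\[
\Delta\omega=r^{k+p-2}\bigl(\bar\Delta\bar\alpha-(k+p)(k+n-p-2)\bar\alpha\bigr)
\]
whenever $\bar\delta\bar\alpha=0$, up to terms involving $\bar\delta\bar\alpha$ that vanish. This yields the eigenvalue directly.
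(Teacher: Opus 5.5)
\textbf{Gap: you never show that each listed value is an eigenvalue.} The paper does not prove this statement; it imports it from Gallot--Meyer and Iwasaki--Katase. Your cone/polynomial-form argument is therefore an independent reconstruction, and its skeleton is sound. Your $\mathcal{H}_{k,p}$ is exactly the space $H''_{k,p}$ that the paper recalls in \secref{sec:Test} from Raulot--Savo. Your radial computation agrees with \lemref{lm:delta=deltat+deltar} applied to $r^{k+p}\xi$. The passage from coexact $(p-1)$-forms to exact $p$-forms is correct once you add the easy surjectivity: an exact $\lambda$-eigenform $\beta$ equals $\bar d(\lambda^{-1}\bar\delta\beta)$, and $\bar\delta\beta$ is a coexact $\lambda$-eigenform. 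The gap is that nothing shows $\mathcal{H}_{k,p}\neq 0$ for every $k\geq 1$. Density of $\bigoplus_k\iota^*\mathcal{H}_{k,p}$ only gives the inclusion of the spectrum in the list; it says nothing about individual summands. This is exactly where $p\leq n-2$ must enter, and your argument never uses it for this purpose. For $p=n-1$ the same computation would predict the values $(k+n-1)(k-1)$, $k\geq 2$, which are not eigenvalues at all. Indeed, coclosed $(n-1)$-forms on $\mathbb{S}^{n-1}$ are multiples of the volume form, so $\mathcal{H}_{k,n-1}=0$ for $k\geq 2$. The fix is an explicit element. Take a nonzero harmonic homogeneous polynomial $f$ of degree $k$ in $x_1,\dots,x_{n-p}$ only; this exists for every $k$ precisely because $n-p\geq 2$, e.g.\ $f=\operatorname{Re}(x_1+ix_2)^k$. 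Set $\beta=df\wedge dx_{n-p+1}\wedge\cdots\wedge dx_n$. This $\beta$ is closed, and coclosed because $f$ is harmonic and independent of $x_{n-p+1},\dots,x_n$. Then $X\lrcorner\beta\in\mathcal{H}_{k,p}$, since $\Delta(X\lrcorner\beta)=X\lrcorner\Delta\beta+2\delta\beta$ and $\delta(X\lrcorner\beta)=-X\lrcorner\delta\beta$. It is nonzero because $d(X\lrcorner\beta)=\mathcal{L}_X\beta=(k+p)\beta$. The same construction in degree $p-1$ gives existence for the exact family when $p\geq 2$.

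\textbf{Completeness: the obstacle is not where you place it.} You only need a sum, not a direct sum: $\mathcal{P}_{k,p}=\mathcal{H}_{k,p}+d\mathcal{P}_{k+1,p-1}+X^\flat\wedge\mathcal{P}_{k-1,p-1}+r^2\mathcal{P}_{k-2,p}$. Induction on $k$ then shows the coexact part of $\iota^*\mathcal{P}_{k,p}$ lies in $\bigoplus_{j\leq k}\iota^*\mathcal{H}_{j,p}$. This holds because $\iota^*d\phi=\bar d\iota^*\phi$ has no coexact part, $\iota^*(X^\flat\wedge\chi)=0$, and $\iota^*(r^2\psi)=\iota^*\psi$. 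So no property of the Hodge projection preserving degrees is needed. The sum itself is elementary:
\begin{itemize}
\item The Fischer decomposition $\mathcal{P}_{k,p}=\mathcal{H}^{\mathrm{harm}}_{k,p}\oplus r^2\mathcal{P}_{k-2,p}$ reduces you to harmonic forms.
\item A harmonic coclosed $\omega$ satisfies $(k+p)\omega=d(X\lrcorner\omega)+X\lrcorner d\omega$, with $X\lrcorner d\omega\in\mathcal{H}_{k,p}$.
\item A harmonic $\omega$ is made coclosed by subtracting harmonic forms $aX^\flat\wedge\beta+b\,r^2d\beta$, with $\beta$ the $H'$- and $H''$-components of the harmonic coclosed form $\delta\omega$.
\end{itemize}
Alternatively, you can bypass density with a regularity argument in the spirit of \lemref{lm:P0Q0} and \lemref{lem:PetQpolynomes}. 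Let $\xi$ be a coexact $\lambda$-eigenform and $a>0$ the positive root of $a(a+n-2p-2)=\lambda$. Then $r^a\xi$ is harmonic and coclosed on $\mathbb{R}^n\setminus\{0\}$, with Cartesian coefficients $O(r^{a-p})$ and $a-p>2-n$. So the singularity at $0$ is removable. A homogeneous harmonic function on $\mathbb{R}^n$ is a polynomial, hence $a-p=k\in\mathbb{N}$ and $\xi\in\iota^*\mathcal{H}_{k,p}$.
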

We write $\bar{d}$ and $\bar{\delta}$ for the differential and co-differential on $\mathbb{S}^{n-1}$. Let us show the following Lemma:
 \begin{lemma}
     For all $p\in\{0,\cdots,n-2\}$, $\boldsymbol{\lambda}\in\mathbb{R}$, let $V_{\boldsymbol{\lambda}}$ be the $\boldsymbol{\lambda}$-eigenspace of $\bar{\Delta}$ on $p$-forms. The map $$\bar{d}:V_{\boldsymbol{\lambda^{\Coex}_{k,p}}}\cap\im(\bar{\delta})\longrightarrow V_{\boldsymbol{\lambda^{\Ex}_{k,p+1}}}\cap\im(\bar{d})$$ is an isomorphism.
 \end{lemma}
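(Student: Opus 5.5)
The plan is to check, in order, that $\bar d$ is well defined into the target, that it is injective, and that it is surjective, using only that $\bar d$ commutes with $\bar\Delta$ and the Hodge decomposition on the closed manifold $\mathbb{S}^{n-1}$.

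First, well-definedness. Since $\bar{\Delta}=\bar d\bar\delta+\bar\delta\bar d$ commutes with $\bar d$, a form $\omega\in V_{\boldsymbol{\lambda}}$ satisfies $\bar\Delta(\bar d\omega)=\boldsymbol{\lambda}\,\bar d\omega$. Hence $\bar d$ maps $V_{\boldsymbol{\lambda}}$ into the $\boldsymbol{\lambda}$-eigenspace on $(p+1)$-forms, and its image obviously lies in $\im(\bar d)$. It remains to compare the eigenvalues:
\[
\boldsymbol{\lambda^{\Coex}_{k,p}}=(k+p+1)(k+n-p-1)=(k+(p+1))(k+n-(p+1))=\boldsymbol{\lambda^{\Ex}_{k,p+1}},
\]
using the formulas of \thref{thm:vplaplacien} for $p+1$. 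For $p=0$ we read $\boldsymbol{\lambda^{\Coex}_{k,0}}$ as $\lambda_{k+1,0}=(k+1)(k+n-1)$ from \thref{thm:vplaplacienFCT}, the nonconstant eigenvalues; co-exact functions are those orthogonal to constants. For $p+1=n-1$ the exact $(n-1)$-forms are $\bar d$ of co-exact $(n-2)$-forms, and the formula is consistent with the Hodge-duality remark following \thref{thm:vplaplacienFCT}. So the map lands in $V_{\boldsymbol{\lambda^{\Ex}_{k,p+1}}}\cap\im(\bar d)$.

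Second, injectivity. Take $\omega\in V_{\boldsymbol\lambda}\cap\im(\bar\delta)$ with $\bar d\omega=0$. Since $\bar\delta\omega=0$ as well, we get $\bar\Delta\omega=0$, so $\boldsymbol\lambda\,\omega=0$. Because $\boldsymbol{\lambda^{\Coex}_{k,p}}>0$ (both factors are positive as $p\le n-2$), this forces $\omega=0$. Equivalently, $\|\bar d\omega\|^2=\boldsymbol\lambda\|\omega\|^2$ on co-closed eigenforms, so the map is injective.

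Third, surjectivity. Let $\eta\in V_{\boldsymbol\lambda}\cap\im(\bar d)$ on $(p+1)$-forms, where $\boldsymbol\lambda=\boldsymbol{\lambda^{\Ex}_{k,p+1}}>0$. Set $\omega=\boldsymbol\lambda^{-1}\bar\delta\eta$. Then $\omega\in\im(\bar\delta)$, and $\omega\in V_{\boldsymbol\lambda}$ because $\bar\delta$ commutes with $\bar\Delta$. Moreover $\bar d\omega=\boldsymbol\lambda^{-1}\bar d\bar\delta\eta=\boldsymbol\lambda^{-1}\bar\Delta\eta=\eta$, since $\bar d\eta=0$. Hence $\bar d$ is an isomorphism, with inverse $\boldsymbol\lambda^{-1}\bar\delta$.

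The main obstacle is the edge cases: checking the eigenvalue identification when $p=0$ or $p+1=n-1$, where the earlier theorem is stated only for $1\le p\le n-2$, and making sure the eigenvalues are nonzero so that division by $\boldsymbol\lambda$ is legitimate. These reduce to the index shift and the duality remark above.
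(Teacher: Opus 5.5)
Your proof is correct and follows essentially the same argument as the paper: well-definedness comes from $\bar d$ commuting with $\bar\Delta$, and surjectivity from the explicit inverse $\boldsymbol{\lambda}^{-1}\bar\delta$. The only differences are cosmetic. You check $\boldsymbol{\lambda^{\Coex}_{k,p}}=\boldsymbol{\lambda^{\Ex}_{k,p+1}}$ directly from the formulas, whereas the paper only states it as a consequence after the lemma. For injectivity you use $\boldsymbol{\lambda}>0$ on a closed and co-closed eigenform, whereas the paper uses $(\omega,\omega)_{L^2}=(\bar\delta\alpha,\omega)_{L^2}=(\alpha,\bar d\omega)_{L^2}=0$.
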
 
 \begin{proof}
 Let $V_{\boldsymbol{\lambda^{\Coex}_{k,p}}}$ be the $\boldsymbol{\lambda^{\Coex}_{k,p}}$-eigenspace of $\bar{\Delta}$. Recall that for $1\leq p\leq n-2$, the harmonic space on the sphere is trivial. For a $\boldsymbol{\lambda^{\Coex}_{k,p}}$-eigenform $\omega$ that is co-closed (i.e., $\bar{\delta}\omega = 0$ and $\bar{\Delta}\omega = \boldsymbol{\lambda^{\Coex}_{k,p}}\omega$), we have $\bar{\Delta}\omega = \bar{d}\underbrace{\bar{\delta}\omega}_{0} + \bar{\delta} \bar{d}\omega$. It follows that $\boldsymbol{\lambda^{\Coex}_{k,p}}\bar{d}\omega = \bar{d}\bar{\delta}\omega + \bar{\delta} \underbrace{\bar{d}\bar{d}}_{0}\omega = \bar{\Delta} \bar{d}\omega$, hence, $\bar{d}\omega$ is a $\boldsymbol{\lambda^{\Coex}_{k,p}}$-eigenform. To show that $\bar{d}$ is injective, we let $\omega\in V_{\boldsymbol{\lambda^{\Coex}_{k,p}}}\cap\im(\bar{\delta})$ such that $\bar{d}\omega = 0$. We have $\w=\bar{\delta}\alpha$ for some $\alpha\in\Omega^{p-1}(\mathbb{S}^{n-1}),$ therefore $(\w,\w)_{L^2(\mathbb{S}^{n-1})}=(\bar{\delta}\alpha,\w)_{L^2(\mathbb{S}^{n-1})}=(\alpha,\bar{d}\w)_{L^2(\mathbb{S}^{n-1})}=0,$ which yields $\omega = 0$. To show the surjectivity of $\bar{d}$, we let $\omega\in V_{\boldsymbol{\lambda^{\Ex}_{k,p+1}}}\cap\im(\bar{d})$. It follows that $\bar{d}\bar{\delta}\omega + \underbrace{\bar{\delta} \bar{d}\omega}_{0} = \bar{\Delta}\omega = \boldsymbol{\lambda^{\Ex}_{k,p+1}}\omega$, which allows us to write $\omega = \bar{d}\left(\frac{1}{\boldsymbol{\lambda^{\Ex}_{k,p+1}}}\bar{\delta}\omega\right)$. Furthermore, we have $\bar{\delta}\left(\frac{1}{\boldsymbol{\lambda^{\Ex}_{k,p+1}}}\bar{\delta}\omega\right) = 0$ and $\bar{\Delta}\bar{\delta}\omega = \underbrace{\bar{d}\bar{\delta}\bar{\delta}\omega}_{0} + \bar{\delta} \bar{d}\bar{\delta}\omega = \bar{\delta}\boldsymbol{\lambda^{\Ex}_{k,p+1}}\omega = \boldsymbol{\lambda^{\Ex}_{k,p+1}}\bar{\delta}\omega$. Therefore the map $\bar{d}$ defined above is an isomorphism.\end{proof}
As a consequence, we have $\boldsymbol{\lambda^{\Coex}_{k,p}} = \boldsymbol{\lambda^{\Ex}_{k,p+1}}$. 
\begin{lemma}\cite[p.144]{IwasakiKiyoshi}\label{lm:spectredistinct}
    For $n\geq 3$, $p\in\{1,\cdots,n-2\}$, on $\Omega^p(\mathbb{S}^{n-1}),$ we have $\spec(\bar{\Delta}_{|\Ex})\cap\spec(\bar{\Delta}_{|\Coex})=\emptyset$, except for $n-1=2p.$
\end{lemma}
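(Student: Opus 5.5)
Since the spectra of $\bar{\Delta}$ on $p$-forms of $\mathbb{S}^{n-1}$ are given explicitly by \thref{thm:vplaplacien}, the claim is an elementary question about when two quadratic expressions in the integers $k,k'\geq 0$ coincide. We compare
\[
\boldsymbol{\lambda^{\Ex}_{k,p}}=(k+p)(k+n-p),\qquad \boldsymbol{\lambda^{\Coex}_{k',p}}=(k'+p+1)(k'+n-p-1),
\]
and determine all solutions of $\boldsymbol{\lambda^{\Ex}_{k,p}}=\boldsymbol{\lambda^{\Coex}_{k',p}}$.

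\textbf{Step 1: reduce to a difference of squares.} Set $m=n-1$, the dimension of the sphere, and complete the square in each expression:
\[
(k+p)(k+n-p)=\Bigl(k+\tfrac n2\Bigr)^2-\Bigl(\tfrac n2-p\Bigr)^2,
\]
\[
(k'+p+1)(k'+n-p-1)=\Bigl(k'+\tfrac n2\Bigr)^2-\Bigl(\tfrac n2-p-1\Bigr)^2 .
\]
Multiplying by $4$ and writing $a=2k+n$, $b=2k'+n$, equality becomes
\[
a^2-b^2=(n-2p)^2-(n-2p-2)^2=4(n-2p-1)=4(m-2p).
\]

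\textbf{Step 2: the case $m=2p$.} If $m=2p$, the equation forces $a=b$, that is $k=k'$. So the two spectra coincide entirely, which is exactly the stated exception (Hodge duality on the even-dimensional sphere).

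\textbf{Step 3: the case $m\neq 2p$.} Write $(a-b)(a+b)=4(m-2p)$. Since $a-b=2(k-k')$ and $a+b=2(k+k'+n)$, this reads
\[
(k-k')(k+k'+n)=n-1-2p .
\]
Because $1\le p\le n-2$, the right-hand side satisfies $0<|n-1-2p|\le n-3<n$, the left inequality holding precisely because $m\neq 2p$. The integer $k-k'$ must be nonzero, since otherwise the left side vanishes. Then
\[
|(k-k')(k+k'+n)|\ge k+k'+n\ge n,
\]
which contradicts $|n-1-2p|<n$. Hence there is no coincidence, and the spectra are disjoint.

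The only point needing care is the bound $|n-1-2p|<n$. It follows from $p\geq 1$ and $p\le n-2$, giving $n-1-2p\le n-3$ and $n-1-2p\ge 3-n$. I expect no real obstacle beyond keeping the indexing conventions of \thref{thm:vplaplacien} straight.
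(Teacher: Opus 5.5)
Your proof is correct, and it follows the same route as the paper. You equate the two explicit families from \thref{thm:vplaplacien} and reduce to the integer equation $k^2+kn=\tilde k^2+\tilde k n+n-1-2p$. Your difference-of-squares computation rewrites this as $(k-k')(k+k'+n)=n-1-2p$.

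The difference is in the last step, and your version is the more complete one. The paper settles the case $n-1\neq 2p$ by saying that the graphs of $y=x^2+nx$ and $y=x^2+nx+n-1-2p$ are disjoint. That only excludes solutions with $k=\tilde k$, not coincidences between different indices. For example, a shift of $n+1$ instead of $n-1-2p$ would be realized by $k=1$, $\tilde k=0$.

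What actually closes the argument is your bound $|n-1-2p|\le n-3<n\le |k-k'|(k+k'+n)$ for $k\neq k'$. This bound relies on $1\le p\le n-2$. So your factorization supplies the step the paper only gestures at.
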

 
\begin{proof}
     First, recall that on $\Omega^p(\mathbb{S}^{n-1})$, $\spec({\bar{\Delta}}_{|\Ex})=\{(k+p)(k+n-p),\ k\in \mathbb{N}\}$. Furthermore, we have \begin{align*}
\spec({(\bar{\Delta}_p)}_{|\Coex}) 
&= \spec({(\bar{\Delta}_{p+1})}_{|\Ex}) \\
&= \left\{ (\tilde{k} + p + 1)(n - (p + 1) + \tilde{k}) \;\middle|\; \tilde{k} \in \mathbb{N} \right\} \\
&= \left\{ (\tilde{k} + p + 1)(\tilde{k} + n - p-1) \;\middle|\; \tilde{k} \in \mathbb{N} \right\} \\
&= \spec({(\bar{\Delta}_{n-p})}_{|\Ex}).
\end{align*} Note that, for the latter equality, this is due to the isomorphism induced by the Hodge star operator. Equating the two expressions for the spectra, we find\begin{align*}
    (k+p)(k+n-p)&=(\tilde{k} + p + 1)(\tilde{k} + n - p-1) \\
    \iff k^2+kn&=\tilde{k}^2+\tilde{k}n+n-1-2p.
\end{align*}
There are two cases to consider: \begin{itemize}
    \item \underline{If $n-1=2p$ : } We obtain $k=\tilde{k}.$
    \item \underline{If $n-1\neq 2p$ : } This case cannot occur, since the graphs of the functions\\ $y = x^2 + nx $ 
and $y = x^2 + nx + n-1 - 2p$ are disjoint. 
\end{itemize}
  \end{proof}
Therefore, on $\Omega^p(\mathbb{S}^{n-1})$, for $n-1\neq 2p$ we have \begin{align*}
    \spec(\bar{\Delta})&=\spec(\bar{\Delta}_{|\Ex})\sqcup \spec(\bar{\Delta}_{|\Coex})\\
    &= \{(k+p)(k+n-p)\mid k\in \mathbb{N}\} \sqcup \{(k+p+1)(k+n-p-1)\mid k\in \mathbb{N}\}.
       \end{align*}

In the following, we recall the multiplicities of the Laplacian eigenvalues on $\mathbb{S}^{n-1}$.

\begin{thm}\label{thm:vplaplacienmulti}\cite[Theorem, p.283]{GallotMeyer}, \cite[Theorem 6]{IwasakiKiyoshi}
For $n\geq3$, \( n-1 \neq 2p \) and $p\in\{1,\cdots,n-2\},$ we have on the one hand
 \begin{equation}\label{eq:dimEx}
\dim(\ker(\bar{\Delta}_p-\boldsymbol{\lambda^{\Ex}_{k,p}}))=\frac{(n + k-3)! (n + 2k-2)}{(p - 1)!  k!  (n - p-3)!  (n + k - p - 2 )(k + p)}
\end{equation}

and on the other hand  \begin{equation}\label{eq:dimCoex}
\begin{split}
\dim(\ker(\bar{\Delta}_p-\boldsymbol{\lambda^{\Coex}_{k,p}}))
&= \dim(\ker(\bar{\Delta}_{p+1}-\boldsymbol{\lambda^{\Ex}_{k,p+1}})) \\
&= \frac{(n + k-3)! (n + 2k-2)}{p!  k! (n - p - 4)!  (n + k - p-3)(k + p + 1)}.
\end{split}
\end{equation}

Furthermore, for \( n-1 = 2p \), we get
\begin{equation*}
\dim(\ker(\bar{\Delta}_p-\boldsymbol{\lambda^{\Ex}_{k,p}}))=\dim(\ker(\bar{\Delta}_p-\boldsymbol{\lambda^{\Coex}_{k,p}}))=\frac{2(2p + k)! (2p + 2k + 1)}{p! \, (p - 1)! \, k! \, (k + p)(k + p + 1)}.
\end{equation*}
  \end{thm}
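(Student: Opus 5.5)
The statement is Theorem~\ref{thm:vplaplacienmulti}, which gives the multiplicities of the exact and co-exact eigenvalues of $\bar{\Delta}_p$ on $\mathbb{S}^{n-1}$. The plan is to reduce everything to dimensions of closed/exact eigenspaces, and then count those using the representation theory of $SO(n)$.

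First, by Lemma~\ref{lm:spectredistinct}, when $n-1\neq 2p$ an eigenvalue $\boldsymbol{\lambda^{\Ex}_{k,p}}$ is never a co-exact eigenvalue. Since $H^p(\mathbb{S}^{n-1})=0$ for $1\le p\le n-2$, Hodge decomposition gives
\[
\ker(\bar{\Delta}_p-\boldsymbol{\lambda^{\Ex}_{k,p}})=V_{\boldsymbol{\lambda^{\Ex}_{k,p}}}\cap\im(\bar d),
\]
and similarly
\[
\ker(\bar{\Delta}_p-\boldsymbol{\lambda^{\Coex}_{k,p}})=V_{\boldsymbol{\lambda^{\Coex}_{k,p}}}\cap\im(\bar\delta).
\]
By the preceding lemma, $\bar d$ maps the latter space isomorphically onto $V_{\boldsymbol{\lambda^{\Ex}_{k,p+1}}}\cap\im(\bar d)$. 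This proves the first equality in \eqref{eq:dimCoex}, so it suffices to establish \eqref{eq:dimEx} for every degree; the co-exact formula then follows by substituting $p\mapsto p+1$.

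To compute $E_{k,p}:=\dim(V_{\boldsymbol{\lambda^{\Ex}_{k,p}}}\cap \im\bar d)$, I would use the same lemma recursively. Exact $p$-eigenforms correspond, via $\bar d$, to co-exact $(p-1)$-eigenforms, and these are isomorphic (by Hodge star) to exact $(n-p)$-eigenforms. A cleaner route is to realise the eigenspaces as $SO(n)$-modules. Eigenforms of degree $p$ are restrictions to the sphere of homogeneous harmonic polynomial $p$-forms on $\mathbb{R}^n$ that are co-closed and satisfy $x\lrcorner\omega=0$. The exact and co-exact pieces are then the irreducible modules with highest weights $(k+1,1^{p-1},0,\dots)$ and $(k+1,1^{p},0,\dots)$ respectively; this identification is the content of \cite{IwasakiKiyoshi}. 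Their dimensions follow from the Weyl dimension formula, or equivalently from a telescoping count. One can write $\dim(\mathcal H_k\otimes\Lambda^p)$ (harmonic polynomials of degree $k$ tensored with $\Lambda^p$), subtract the pieces coming from multiplication by $dr$ and interior product with $x$, and use
\[
E_{k,p}+E_{k,p+1}=\dim\{\text{eigenforms of }\bar\Delta_p\text{ at level }k\}.
\]
This relation lets one solve recursively, starting from $E_{k,1}=\dim\ker(\bar\Delta_0-\lambda_{k+1,0})$, which is given by Theorem~\ref{thm:vplaplacienFCT}.

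The main obstacle is the combinatorial identity: checking that the closed form
\[
\frac{(n+k-3)!\,(n+2k-2)}{(p-1)!\,k!\,(n-p-3)!\,(n+k-p-2)(k+p)}
\]
satisfies the recursion and the initial condition. I would verify this by induction on $p$, reducing both sides to a common denominator. For $n-1=2p$, the exact and co-exact eigenvalues coincide. The two summands are interchanged by $\ast\bar d$ up to scaling, so they have equal dimension, and substituting $n=2p+1$ into \eqref{eq:dimEx} gives the stated symmetric formula.
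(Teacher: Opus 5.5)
Your reductions (Hodge decomposition, \lemref{lm:spectredistinct}, and $\bar d$ as an isomorphism from co-exact $p$-eigenforms onto exact $(p+1)$-eigenforms) are sound. But the step you defer as ``the main obstacle'' cannot be carried out: the closed forms \eqref{eq:dimEx} and \eqref{eq:dimCoex} are not the multiplicities on $\mathbb{S}^{n-1}$. The paper gives no argument here, only the citation, so nothing in it catches this.

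Your own base case already exposes the problem. At $p=1$ the printed formula equals $\frac{(n+2k-2)\,(n+k-4)!}{(k+1)!\,(n-4)!}$, whereas \thref{thm:vplaplacienFCT} gives $E_{k,1}=\dim\ker(\bar\Delta_0-\boldsymbol{\lambda_{k+1,0}})=\frac{(n+2k)\,(n+k-2)!}{(k+1)!\,(n-2)!}$. Concretely:
\begin{itemize}
\item For $n=4$ the formula returns $2$ for every $k$. The exact $1$-eigenforms on $\mathbb{S}^3$ at level $k$ are the $\bar d f$ with $f$ a spherical harmonic of degree $k+1$, a space of dimension $(k+2)^2$.
\item For $n=6$, $p=1$, $k=0$, \eqref{eq:dimCoex} gives $6$. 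The co-exact $1$-eigenforms with eigenvalue $8$ on $\mathbb{S}^5$ are the Killing forms, of dimension $\dim\mathfrak{so}(6)=15$.
\item For $p=n-2$ the factor $(n-p-3)!$ is undefined.
\end{itemize}
The printed expressions are the correct ones with $n$ replaced by $n-2$, i.e.\ the counts for $\mathbb{S}^{n-3}$. What your strategy actually produces (highest weight $(k+1,1^{p-1})$, or the recursion seeded by \thref{thm:vplaplacienFCT}) is
\[
\dim\bigl(V_{\boldsymbol{\lambda^{\Ex}_{k,p}}}\cap\operatorname{im}\bar d\bigr)=\frac{(n+k-1)!\,(n+2k)}{(p-1)!\,k!\,(n-p-1)!\,(n+k-p)\,(k+p)},
\]
with the co-exact count obtained by $p\mapsto p+1$. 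So your induction ``reducing both sides to a common denominator'' fails at its first step; a proof has to correct the statement, not verify it. Your recursion is also only sketched: the level-$k$ total on its right-hand side is never actually counted.

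There are two further gaps.
\begin{itemize}
\item \textbf{First equality in \eqref{eq:dimCoex}.} You identify $\ker(\bar\Delta_{p+1}-\boldsymbol{\lambda^{\Ex}_{k,p+1}})$ with its exact part. That is \lemref{lm:spectredistinct} applied in degree $p+1$, which needs $n-1\neq 2(p+1)$; this does not follow from $n-1\neq 2p$. For $n=2p+3$ (e.g.\ $n=5$, $p=1$), the Hodge star of $\mathbb{S}^{2p+2}$ exchanges exact and co-exact $(p+1)$-forms with the same eigenvalue. The full kernel is then twice the co-exact $p$-multiplicity, so the equality holds only with $V\cap\operatorname{im}\bar d$ on the right.
\item \textbf{The case $n-1=2p$.} The map exchanging exact and co-exact $p$-forms on $\mathbb{S}^{2p}$ is $\ast$, not $\ast\bar d$, which lands in $(p-1)$-forms. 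The displayed formula is the dimension of the whole kernel, i.e.\ twice the exact count. It comes from doubling the corrected formula at $n=2p+1$, not from substituting into \eqref{eq:dimEx} as printed.
\end{itemize}
Finally, if you count via the Weyl dimension formula, note that for $n=2p+2$ the complexified co-exact $p$-eigenspace is not irreducible. It is the sum of the two modules with highest weights $(k+1,1^{p-1},\pm1)$, so a single Weyl dimension gives only half the count.
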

  
\subsection{Spectrum of the NBS operator on the Euclidean ball}\label{sec:sectionNBSSpectre}
We start the computation of the spectrum of the NBS on the Euclidean ball. To do so, we let $(M^n,g)=(\mathbb{B}^n,g_0)$ be the Euclidean ball of dimension $n\geq 2$ and radius $1$ equipped with the standard metric. We study the spectrum of the NBS problem: \begin{equation}\label{eq:NBSspectre}
    (NBS)
    \left\{
    \hspace{-0.3em}
    \begin{array}{r@{\quad}c@{\quad}l@{\quad}l}
        \Delta^2 \w &=& 0 & \text{on } M \\[0.15em]
        \nu \lrcorner \w &=& 0 & \text{on } \partial M \\[0.15em]
        \nu \lrcorner d\w &=& 0 & \text{on } \partial M \\[0.15em]
        \nu \lrcorner \Delta\w &=& 0 & \text{on } \partial M \\[0.15em]
        \nu \lrcorner d\Delta\w + \ell \, \iota^* \w &=& 0 & \text{on } \partial M
    \end{array}
    \right.
\end{equation} \begin{remark}\label{rk:spectreNBSFct}
 The spectrum of NBS for functions $f$ on the interval $M=[0,1]$ reduces to $\{0, 24\}$, with each of these eigenvalues having multiplicity $1$. We get this by simply solving the system \begin{equation*}
    \left\{
    \hspace{-0.3em}
    \begin{array}{r@{\quad}c@{\quad}l@{\quad}l}
        f^{(4)} &=& 0 & \text{on } M \\[0.15em]
        f'(0) &=& 0 & \text{on } \partial M \\[0.15em]
        f'(1) &=& 0 & \text{on } \partial M \\[0.15em]
        -f^{(3)}(0) + \ell f(0) &=& 0 & \text{on } \partial M \\[0.15em]
        f^{(3)}(1) + \ell f(1) &=& 0 & \text{on } \partial M.
    \end{array}
    \right.
\end{equation*}
\end{remark}
The purpose of this section is to show \thref{thm:spectreNBS} and \thref{thm:spectreNBScasparticulier}.
\begin{proof}[Proof of \thref{thm:spectreNBS}]
Assume $n\geq3$ and $p\in\{1,\cdots,n-2\}.$ We start the proof by recalling Almansi decomposition \cite{Almansi}, \cite[Chapter V]{aronszajn}, which states that for any biharmonic form $\omega$ on the Euclidean ball, there exist harmonic forms $\w_0, {\w_1} \in \Omega^p(\mathbb{B}^n)$ with coordinate components $\w_0 = \sum_I \w_{0,I} dx_I$ and ${\w_1} = \sum_I \w_{1,I} dx_I$, where $I = (1, \dots, p)$, such that:
\begin{equation*}
\omega(x) = \w_0(x) + |x|^2 {\w_1}(x).
\end{equation*}
We define ${\w_2}\in \Omega^p(\mathbb{B}^n)$ by ${\w_2}:= -\frac{1}{2}(\w_0+{\w_1})$. Since ${\w_2}$ is harmonic, substituting this into the classical expansion yields the modified decomposition:
\begin{equation}\label{eq:w=W+H}
\omega(x) = -2{\w_2}(x) + (|x|^2 - 1) {\w_1}(x).
\end{equation}

  We consider $p$-forms ${\w}$ on $\mathbb{B}^{n}$ which can be written in polar coordinates for all $(r,\theta)\in]0,1[\times \mathbb{S}^{n-1}$ as \begin{equation}\label{eq:W=P+Q}
{\w}(r,\theta) = P(r) \, dr \wedge \eta(\theta) + Q(r) \xi(\theta)\end{equation} where $\eta\in \Omega^{p-1}(\mathbb{S}^{n-1})$ and $\xi \in \Omega^{p}(\mathbb{S}^{n-1})$ are independent of $r$, and $P, Q\in$ are smooth functions in $C^\infty([0,1],\mathbb{R}_+)$ depending only on $r$. 

\begin{lemma}\label{lm:P0Q0}
   Let $\w\in\Omega^p(\mathbb{B}^n)$ be a smooth form which can be written as in \eqref{eq:W=P+Q}. Then for $p\geq1,$ $Q(0)=0$ and for $p\geq2,$ $P(0)=0.$
\end{lemma}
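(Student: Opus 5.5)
The plan is to use smoothness of $\w$ at the origin, together with the fact that a nonzero form on $\mathbb{S}^{n-1}$ cannot arise from a constant-coefficient form near $0$ in the relevant way. I will compare the polar expression \eqref{eq:W=P+Q} with the Cartesian expansion of $\w$ near $x=0$. Write $x=r\theta$ with $\theta\in\mathbb{S}^{n-1}$. Then $dx_i=\theta_i\,dr+r\,d\theta_i$. Here $d\theta_i$ denotes the differential of the restricted coordinate function on the sphere, pulled back by the radial projection $\pi(x)=x/|x|$. Since $\w$ is smooth, $\w(x)=\w(0)+O(|x|)$, where $\w(0)=\sum_I c_I\,dx_I$ has constant coefficients.

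Expanding $dx_I=dx_{i_1}\wedge\cdots\wedge dx_{i_p}$ in polar terms gives
\[
dx_I=r^{p}\,\pi^*(d\theta_{i_1}\wedge\cdots\wedge d\theta_{i_p})+r^{p-1}\,dr\wedge\pi^*(\beta_I),
\]
where $\beta_I$ is a fixed $(p-1)$-form on the sphere. Consequently the tangential part of $\w$ at radius $r$ is of order $r^{p}$ plus higher-order terms. More precisely, I would bound the norms instead: the tangential component measured against the unit-sphere metric is $Q(r)\xi$. The pullback of $\w$ to the sphere of radius $r$, rescaled to the unit sphere via $\theta\mapsto r\theta$, has unit-sphere norm $\le r^{p}\,\sup|\w|$. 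This holds because pulling back a $p$-form under a dilation by $r$ multiplies its norm by $r^p$. Hence $|Q(r)|\,\|\xi\|\le C r^{p}$.

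If $\xi\neq0$, this forces $Q(0)=0$ for $p\ge1$. If $\xi=0$, the term $Q\xi$ vanishes and we may take $Q\equiv0$. Similarly $\nu\lrcorner\w=\partial_r\lrcorner\w=P(r)\eta$, since $\partial_r\lrcorner(dr\wedge\eta)=\eta$. Restricting this $(p-1)$-form to the sphere of radius $r$ and rescaling gives the bound $|P(r)|\,\|\eta\|\le C r^{p-1}$, using $|\partial_r|=1$. Therefore $P(0)=0$ when $p\ge2$, which explains the threshold $p-1\ge1$.

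The main obstacle is justifying the scaling claim cleanly. Let $\phi_r:\mathbb{S}^{n-1}\to\mathbb{B}^n$ be $\phi_r(\theta)=r\theta$. Then $d\phi_r$ has operator norm $r$, so $|\phi_r^*\alpha|\le r^{k}\,|\alpha|$ for any $k$-form $\alpha$, pointwise. Applying this to $\w$ gives $\phi_r^*\w=Q(r)\xi$. Applying it to $\partial_r\lrcorner\w$, which is smooth away from $0$ and bounded near $0$ because $|\partial_r\lrcorner\w|\le|\w|$, gives $\phi_r^*(\partial_r\lrcorner\w)=P(r)\eta$. Letting $r\to0$ and using continuity of $P$ and $Q$ on $[0,1]$ yields the claims. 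The only care needed is to note that $\eta$ and $\xi$ are fixed and nonzero; otherwise the corresponding statement is vacuous, with the convention that the coefficient is then taken to be zero.
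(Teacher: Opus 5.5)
Your proof is correct and is essentially the paper's argument. The paper pulls $\omega$ back along $x\mapsto(|x|,x/|x|)$ and evaluates it on vectors along a radial line, which exhibits the factors $|x|^{-p}Q(|x|)$ and $|x|^{-p+1}P(|x|)$; these must stay bounded at the origin. Your dilation estimate $|\phi_r^*\alpha|\le r^k|\alpha|$ expresses the same scaling without coordinates and gives $Q=O(r^p)$ and $P=O(r^{p-1})$. The only slip is that the paper's $\nu$ is inward, so $\nu\lrcorner\omega=-\partial_r\lrcorner\omega$ rather than $\partial_r\lrcorner\omega$; this is harmless since you only use norms.
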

\begin{proof}
Let $
\phi(x) = \left( |x|, \frac{x}{|x|} \right) \in \mathbb{R}^*_+ \times \mathbb{S}^{n-1}$ then $d_x\phi(X) = \left( \left\langle X, \frac{x}{|x|} \right\rangle, \frac{1}{|x|} \left( X - \left\langle X, \frac{x}{|x|} \right\rangle \frac{x}{|x|} \right) \right).
$
We consider $\w\in \Omega^p(\mathbb{B}^n)$ satisfying \eqref{eq:W=P+Q}, thus denoting by $X^T=X - \left\langle X, \frac{x}{|x|} \right\rangle \frac{x}{|x|}$, we have:
\begin{align*}
(\phi^*\omega)_x(X_1, \cdots, X_p) &= \omega_{\phi(x)}(d_x\phi(X_1), \cdots, d_x\phi(X_p)) \\
&= \omega_{\phi(x)} \left( \left\langle X_1, \frac{x}{|x|} \right\rangle \partial_r + \frac{1}{|x|} X_1^T, \cdots, \left\langle X_p, \frac{x}{|x|} \right\rangle \partial_r + \frac{1}{|x|} X_p^T \right) \\
&= \sum_{j=1}^p \left\langle X_j, \frac{x}{|x|} \right\rangle \omega_{\phi(x)} \left( \frac{1}{|x|} X_1^T, \cdots, \underbrace{\partial_r}_{j}, \cdots, \frac{1}{|x|} X_p^T \right) \\
&\quad + \frac{1}{|x|^p} \omega_{\phi(x)} \left( X_1^T, \cdots, X_p^T \right) \\
&= |x|^{-p+1} \sum_{j=1}^p (-1)^{j-1} \left\langle X_j, \frac{x}{|x|} \right\rangle \partial_r \lrcorner \omega_{\phi(x)} (X_1^T, \cdots, \widehat{X_j^T}, \cdots, X_p^T) \\
&\quad + |x|^{-p} \omega_{\phi(x)} (X_1^T, \cdots, X_p^T) \\
&= |x|^{-p+1} P(|x|) \sum_{j=1}^p (-1)^{j-1} \left\langle X_j, \frac{x}{|x|} \right\rangle \eta_{\frac{x}{|x|}} (X_1^T, \cdots, \widehat{X_j^T}, \cdots, X_p^T) \\
&\quad + |x|^{-p} Q(|x|) \xi_{\frac{x}{|x|}} (X_1^T, \cdots, X_p^T).
\end{align*}
In particular, fixing some $y\in\mathbb{S}^{n-1},$ and taking $x= ty, t \in \mathbb{R}$, we have two cases:
\begin{itemize}
    \item [$\bullet$] If $X_1 = y$ and $X_2,\cdots,X_p\perp y$: $$(\phi^*\omega)_{ty}(y, X_2, \cdots, X_p) = |t|^{-p+1} P(|t|){\operatorname{sgn(t)}}\eta_{\operatorname{sgn(t)}y})(X_2, \cdots, X_p).$$ Since there exist $y\in\mathbb{S}^{n-1}$ and $X_2,\cdots,X_k$ such that $\eta_{ y}(X_2, \cdots, X_p) \neq 0$ or $\eta_{-y}(X_2, \cdots, X_p) \neq 0$, the limit when $t$ goes to zero of $|t|^{-p+1} P(|t|) $ must be finite, consequently $P(t) = t^{p-1} \tilde{P}(t)$ for some $\tilde{P}(t)$. In particular for $p=1,$ this case is empty.
    \item [$\bullet$] If $ X_1, \cdots, X_p \perp y$: $(\phi^*\omega)_{ty}(y, X_2, \cdots, X_p) = |t|^{-p} Q(|t|) \xi_{\operatorname{sgn(t)}y}(X_1, \cdots, X_p)$. Analogously, we get $ Q(t) = t^p \tilde{Q}(t)$ for some $\tilde{Q}(t).$
\end{itemize}
\end{proof}
We now recall the following lemma, which provides an expression for the Hodge Laplacian on the unit ball in $\mathbb{R}^n$ as in \cite{RaulotSavo3}:
 \begin{lemma}\cite[Lemma 6]{RaulotSavo3}\label{lm:delta=deltat+deltar} Let $\bar{d}$ and $\bar{\delta}$ denote, respectively, the differential and the co-differential acting on $\mathbb{S}^{n-1}$. Let ${\w}$ be a $p$-form that can be written $\w = P\, dr \wedge \eta + Q \xi$ as in \eqref{eq:W=P+Q}. Then we have:
 $$\Delta {\w}={(\Delta\w)}_{|\tann}+dr\wedge {(\Delta\w)}_{|\radd}$$ where\[
\left\{
\begin{alignedat}{2}
    {(\Delta\w)}_{|\tann} &= \iota^*\Delta\w &&= \frac{Q_{\w}}{r^2}\bar{\Delta}\xi_{\w}-\left(Q''_{\w}+\frac{n-2p-1}{r}Q'_{\w}\right)\xi_{\w}-\frac{2P_{\w}}{r}\bar{d}\eta_{\w} \\[4mm]
    {(\Delta\w)}_{|\radd} &= \partial r\lrcorner\Delta\w &&= \frac{P_{\w}}{r^2}\bar{\Delta}\eta_{\w}-\left(P'_{\w}+\frac{n-2p+1}{r}P_{\w}\right)'\eta_{\w}-\frac{2Q_{\w}}{r^3}\bar{\delta}\xi_{\w}.
\end{alignedat}
\right.
\]
\end{lemma}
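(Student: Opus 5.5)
The plan is to compute $d$ and $\delta$ separately on the two pieces $P\,dr\wedge\eta$ and $Q\,\xi$ in polar coordinates, and then assemble $\Delta=d\delta+\delta d$. The tangential and radial parts are then read off by applying $\iota^*$ and $\partial_r\lrcorner$. Throughout, the Euclidean metric on $\mathbb{B}^n\setminus\{0\}$ is written as the warped product $g_0=dr^2+r^2\bar g$, where $\bar g$ is the round metric on $\mathbb{S}^{n-1}$. Since $\eta$ and $\xi$ do not depend on $r$, the exterior derivative splits as $d=dr\wedge\partial_r+\bar d$. This gives
\[
d\w=-P\,dr\wedge\bar d\eta+Q'\,dr\wedge\xi+Q\,\bar d\xi .
\]

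For the codifferential I will use the Hodge star of the warped product. For an $r$-independent $k$-form $\beta$ on the sphere one has $*_{g_0}\beta=\pm r^{n-1-2k}\,dr\wedge\bar*\beta$ and $*_{g_0}(dr\wedge\beta)=\pm r^{n-1-2k}\,\bar*\beta$. Writing $\delta=\pm *d*$ and differentiating the powers of $r$, I expect
\[
\delta(Q\,\xi)=\frac{Q}{r^2}\bar\delta\xi,\qquad
\delta(P\,dr\wedge\eta)=-\Big(P'+\frac{n-2p+1}{r}P\Big)\eta-\frac{P}{r^2}\,dr\wedge\bar\delta\eta,
\]
with the precise signs to be fixed by this computation. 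A useful cross-check is the case $p=1$ with $\eta=1$, $P(r)=r$: then $\w=r\,dr=\tfrac12 d(r^2)$, and $\delta\w=-n$ must hold. This pins down the coefficient $(n-2p+1)/r$. The same formula, applied to the pieces $Q'\,dr\wedge\xi$ and $-P\,dr\wedge\bar d\eta$ of $d\w$, has degree shifted from $p$ to $p+1$, so its coefficient becomes $(n-2p-1)/r$. That explains the $Q$-coefficient in the claimed tangential part.

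Next I apply $d$ to $\delta\w$ and $\delta$ to $d\w$ using the same rules, then collect terms. In the tangential component $\iota^*$:
\begin{itemize}
\item The sphere terms $r^{-2}Q(\bar d\bar\delta+\bar\delta\bar d)\xi$ combine into $\frac{Q}{r^2}\bar\Delta\xi$.
\item The radial derivatives of $Q$ give $-(Q''+\frac{n-2p-1}{r}Q')\xi$.
\item The two mixed contributions, $-\bar d$ applied to $(P'+\frac{n-2p+1}{r}P)\eta$ and the term coming from $\delta(-P\,dr\wedge\bar d\eta)$, should cancel except for $-\frac{2P}{r}\bar d\eta$. This leftover comes from the mismatch $(n-2p+1)-(n-2p-1)=2$.
\end{itemize}
Symmetrically, in the radial component the $\eta$-terms give $\frac{P}{r^2}\bar\Delta\eta-(P'+\frac{n-2p+1}{r}P)'\eta$. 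The $\xi$-terms leave $-\frac{2Q}{r^3}\bar\delta\xi$, arising from differentiating $r^{-2}Q$ against the $Q'/r^2$ term.

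The main obstacle is purely bookkeeping: the signs produced by commuting $dr\wedge$ past $\bar d$ and $\bar\delta$, and the exact powers of $r$ in the Hodge star. I will fix them once and for all by checking every formula on simple test forms. Examples are $Q\,\xi$ with $\xi$ closed on the sphere, and the exact form $d(f(r)\phi(\theta))$, for which $\Delta$ commutes with $d$ and the answer is known from the scalar Laplacian $f''+\frac{n-1}{r}f'-\frac{\bar\Delta\phi}{r^2}$. These consistency checks constrain all the coefficients appearing in the statement.
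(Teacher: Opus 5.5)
Your proposal is correct. The two intermediate formulas you predict hold exactly with the signs you wrote:
\[
\delta(Q\xi)=\frac{Q}{r^2}\bar{\delta}\xi,\qquad
\delta(P\,dr\wedge\eta)=-\Bigl(P'+\frac{n-2p+1}{r}P\Bigr)\eta-\frac{P}{r^2}\,dr\wedge\bar{\delta}\eta .
\]
Applying the degree-$(p+1)$ version of the second formula to $Q'\,dr\wedge\xi$ and $-P\,dr\wedge\bar{d}\eta$, and assembling $d\delta+\delta d$, gives precisely the stated tangential and radial parts. In particular, the cross terms $-\frac{2P}{r}\bar{d}\eta$ and $-\frac{2Q}{r^3}\bar{\delta}\xi$ arise from the two mechanisms you identify: the mismatch $(n-2p+1)-(n-2p-1)=2$, and $(Q/r^2)'-Q'/r^2=-2Q/r^3$.

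The paper gives no proof of this lemma; it only quotes it from Raulot--Savo. Your computation is therefore a self-contained substitute rather than a reproduction of an argument in the text.

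If you want to avoid the Hodge-star sign bookkeeping, there is a shorter route using tools the paper itself uses later: $\nabla_{\partial_r}\xi=-\frac{p}{r}\xi$, the Gauss-type formula for $\nabla_X\xi$, and $\nabla dr=\frac1r(g_0-dr\otimes dr)$. Then $\delta=-\sum_i e_i\lrcorner\nabla_{e_i}$ gives the contributions $-\frac{p-1}{r}\eta$ from $\partial_r$ and $\frac{n-p}{r}\eta$ from the tangential frame. This yields the coefficient $n-2p+1$ with no sign ambiguity. The normal part of $\nabla_{e_j}\eta$ drops out after summation because the shape operator of the spheres is a multiple of the identity.

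Two small points to state explicitly in a write-up:
\begin{itemize}
\item $\eta$ and $\xi$ are pullbacks under $x\mapsto x/|x|$, so the identity is established on $\mathbb{B}^n\setminus\{0\}$.
\item Your test-form checks ($r\,dr$ and $d(f(r)\phi)$) are useful sanity checks, but they do not replace the direct computation as a proof.
\end{itemize}
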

Let us recall the following integration by parts formula: \begin{align}
\int_M \langle \Delta^2 \omega , \omega' \rangle \, d\mu_g
&= \int_M \langle \Delta \omega , \Delta \omega' \rangle \, d\mu_g \notag \\
&\quad + \int_{\partial M} \Big[
    \langle \nu \lrcorner d \Delta \omega, \iota^* \omega' \rangle
    - \langle \iota^* \Delta \omega, \nu \lrcorner d \omega' \rangle \notag \\
&\qquad\qquad
    + \langle \nu \lrcorner \Delta \omega, \iota^* \delta \omega' \rangle
    - \langle \iota^* \delta \Delta \omega, \nu \lrcorner \omega' \rangle
\Big] \, d\mu_g.
\label{eq:IPP2}
\end{align} For more details the reader can refer to \cite{Article1}.
\begin{lemma}\label{lm:wnonnul}
   Let $\w\in\Omega(\mathbb{B}^n)$ be an eigenform of NBS associated to a non-zero eigenvalue. Assume that $\w=-2\w_2(x)+(\lvert x\rvert^2-1)\w_1(x)$ where for $i\in\{1,2\},$ the form $\w_i$ is harmonic. Then ${\w_1}$ and ${\w_2}$ are both non-zero.
\end{lemma}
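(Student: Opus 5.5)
We argue by contradiction in two separate cases, $\w_1=0$ and $\w_2=0$. In each case we show that $\w$ is either zero or lies in the kernel $H_{\Abs}(\mathbb{B}^n)$. The kernel is trivial on the ball for $p\geq1$, and is exactly the eigenvalue-$0$ space by \thref{thm:TheoremeGeneralNBSF3PourLaPreuve}. Either outcome contradicts $\ell\neq0$. The main tool is the integration by parts formula \eqref{eq:IPP2}, applied with $\w'=\w$.

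\textbf{Step 1: the NBS energy identity.} Take $\w'=\w$ in \eqref{eq:IPP2}. Since $\Delta^2\w=0$, $\nu\lrcorner\w=0$, $\nu\lrcorner d\w=0$ and $\nu\lrcorner\Delta\w=0$, three of the four boundary terms vanish. The term $\langle\nu\lrcorner d\Delta\w,\iota^*\w\rangle=-\ell|\iota^*\w|^2$ survives, which gives
\begin{equation*}
\int_{\mathbb{B}^n}|\Delta\w|^2\,d\mu=\ell\int_{\mathbb{S}^{n-1}}|\iota^*\w|^2\,d\mu .
\end{equation*}

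\textbf{Step 2: the case $\w_1=0$.} Then $\w=-2\w_2$ is harmonic, so the left side of the identity vanishes. Since $\ell\neq0$, we get $\iota^*\w=0$. Together with $\nu\lrcorner\w=0$, this gives $\w|_{\partial M}=0$.

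A harmonic form on the ball is componentwise harmonic in Euclidean coordinates. Its components vanish on the sphere, so by the scalar maximum principle $\w=0$. This contradicts the fact that $\w$ is an eigenform.

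\textbf{Step 3: the case $\w_2=0$.} Then $\w=(|x|^2-1)\w_1$, so $\w$ vanishes on $\partial M$. In particular $\iota^*\w=0$, and the identity gives $\Delta\w=0$. A harmonic form vanishing on the boundary is again zero by the componentwise maximum principle, a contradiction.

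One can also see this directly. Using $\Delta(|x|^2 f)=-2n f-4r\partial_r f+|x|^2\Delta f$ componentwise, with the Euclidean convention $\Delta=-\sum\partial_i^2$, we get
\begin{equation*}
\Delta\w=-2n\w_1-4\nabla_{r\partial_r}\w_1 .
\end{equation*}
If this vanishes, then $\w_1$ is homogeneous of degree $-n/2$, and smoothness at $0$ forces $\w_1=0$. The maximum principle argument is the shorter route.

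\textbf{Main difficulty.} The only delicate point is the sign and boundary bookkeeping in \eqref{eq:IPP2}. One has to check that every boundary term other than the $\ell$-term is killed by the four NBS boundary conditions. This uses $\nu\lrcorner\w=0$, $\nu\lrcorner d\w=0$ and $\nu\lrcorner\Delta\w=0$, and in particular that the term $\langle\iota^*\delta\Delta\w,\nu\lrcorner\w\rangle$ vanishes. One also has to confirm that $\ell>0$ is really used: a zero eigenvalue would permit nonzero harmonic absolute forms, though none exist on the ball for $p\geq1$.
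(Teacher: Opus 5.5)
Your proof is correct. In the case $\omega_2=0$ it follows the paper's argument. Integration by parts \eqref{eq:IPP2} forces $\Delta\omega=0$, and a harmonic form with vanishing boundary values is zero. The paper cites Ann\'e for this last step. Your componentwise maximum principle is a legitimate, more elementary substitute in flat Cartesian coordinates, and your homogeneity argument also works.

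In the case $\omega_1=0$ the routes differ. The paper observes that $\omega=-2\omega_2$ is harmonic and satisfies $\nu\lrcorner\omega=0$ and $\nu\lrcorner d\omega=0$. Hence it lies in $H_{\Abs}(\mathbb{B}^n)$, which is trivial for $p\geq 1$ by topology. You instead feed $\Delta\omega=0$ into the energy identity $\|\Delta\omega\|^2=\ell\|\iota^*\omega\|^2$ and use $\ell\neq 0$ to get $\iota^*\omega=0$. This reduces to the same Dirichlet-uniqueness step as the first case.

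Your version genuinely uses the hypothesis $\ell\neq 0$ and works verbatim for $p=0$, where $H_{\Abs}$ consists of constants. It also avoids the implicit Green-formula step showing that a harmonic form with absolute boundary conditions is closed and co-closed. The paper's version needs $p\geq 1$ but not $\ell\neq 0$.

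Your opening remark about possibly landing in $H_{\Abs}(\mathbb{B}^n)$ is superfluous, since both of your cases conclude $\omega=0$ directly.
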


\begin{proof}
    If ${\w_2}$ is zero, then $\omega_{|\partial M}=0$. By integration by parts \eqref{eq:IPP2}, we obtain $\Delta\omega=0$, thus, by \cite{ColetteAnne}, $\omega=0$ on $M$, which is a contradiction. Similarly, if ${\w_1}=0$, then $\omega$ is harmonic and satisfies the boundary conditions of NBS \eqref{eq:NBSspectre}, therefore, ${\w_2}\in H_{\Abs}(\mathbb{B}^n)=\{0\}$ which implies $\omega=0$ on $M.$ Therefore, we necessarily have ${\w_2}\neq0$ and ${\w_1}\neq0.$
\end{proof} 
Let us now fix an eigenform $\w\in\Omega(\mathbb{B}^n)$ for NBS associated to a non-zero eigenvalue and satisfying the hypothesis of the previous Lemma. Assume moreover that $\w_1$ and $\w_2$ can be written as on \eqref{eq:W=P+Q}. Let us fix $i\in\{1,2\}.$ By \lemref{lm:delta=deltat+deltar}, we have: \begin{align}
    \Delta \w_i = 0 
    &\iff (\Delta\w_i)_{|\tann} = 0 \quad\text{and}\quad (\Delta\w_i)_{|\radd} = 0 \nonumber \\[1ex]
    &\iff \left\{
    \begin{aligned}
        (E_{\tann})  &: \frac{Q_{\w_i}}{r^2}\bar{\Delta}\xi_{\w_i} = Q''_{\w_i}\xi_{\w_i} + \frac{n-2p-1}{r}Q'_{\w_i}\xi_{\w_i} + \frac{2P_{\w_i}}{r}\bar{d}\eta_{\w_i} \\[1ex]
        (E_{\radd}) &: \begin{aligned}[t]
            \frac{P_{\w_i}}{r^2}\bar{\Delta}\eta_{\w_i} = P''_{\w_i}\eta_{\w_i} &+ \frac{n-2p+1}{r}P'_{\w_i}\eta_{\w_i} \\
            &- \frac{n-2p+1}{r^2}P_{\w_i}\eta_{\w_i} + \frac{2Q_{\w_i}}{r^3}\bar{\delta}\xi_{\w_i}.
        \end{aligned}
    \end{aligned}
    \right. \label{eq:AWBW}
\end{align}
Applying $\bar{d}$ and $\bar{\delta}$ to $(E_{\tann})$ and $(E_{\radd})$, respectively, we obtain the following system:
\begin{equation}\label{eq:DAWDQW}
    \left\{
    \hspace{-0.3em}
    \begin{array}{l@{\quad}r@{\quad}c@{\quad}l}
        \bar{d}(E_{\tann}): & \dfrac{Q_{\w_i}}{r^2} \bar{d}\bar{\Delta}\xi_{\w_i} &=& Q''_{\w_i}\bar{d}\xi_{\w_i} + \dfrac{n-2p-1}{r} Q'_{\w_i}\bar{d}\xi_{\w_i} \\[2mm]
        \bar{\delta}(E_{\radd}): & \dfrac{P_{\w_i}}{r^2} \bar{\delta}\bar{\Delta}\eta_{\w_i} &=& P''_{\w_i}\bar{\delta}\eta_{\w_i} + \dfrac{n-2p+1}{r} P'_{\w_i}\bar{\delta}\eta_{\w_i} - \dfrac{n-2p+1}{r^2}\bar{\delta}\eta_{\w_i}P_{\w_i}
    \end{array}
    \right.
\end{equation}
Isolating $\bar{d}\xi_{\w_i}$ and $\bar{\delta}\eta_{\w_i}$ in $\bar{d}(E_{\tann})$ and $\bar{\delta}(E_{\radd})$, respectively, we obtain:
\begin{equation}\label{eq:DAWDQWFinale}
    \left\{
    \hspace{-0.3em}
    \begin{array}{l@{\quad}r@{\quad}c@{\quad}l}
        \bar{d}(E_{\tann}): & \bar{\Delta}\bar{d}\xi_{\w_i} &=& \dfrac{r^2Q''_{\w_i} + r(n-2p-1)Q'_{\w_i}}{Q_{\w_i}} \bar{d}\xi_{\w_i} \\[3mm]
        \bar{\delta}(E_{\radd}): & \bar{\Delta}\bar{\delta}\eta_{\w_i} &=& \dfrac{r^2P''_{\w_i} + r(n-2p+1)P'_{\w_i} - (n-2p+1)P_{\w_i}}{P_{\w_i}} \bar{\delta}\eta_{\w_i}
    \end{array}
    \right.
\end{equation}
Consequently, $\bar{d}\xi_{\w_i}$ and $\bar{\delta}\eta_{\w_i}$ are both eigenforms of the Laplacian on the sphere, unless they vanish. By \thref{thm:vplaplacien} there exist $k,k'\in\mathbb{N}$ such that writing\[
\begin{aligned}
    \boldsymbol{\lambda_{k,p+1}^{\Ex}} &= (k+p+1)(k+n-p-1) 
    &;\,\,  \,
    \boldsymbol{\lambda_{k',p-2}^{Coex}} &= (k'+p-1)(k'+n-p+1)
\end{aligned}
\]we have $\bar{\Delta}(\bar{d}\xi_{\w_i})=\boldsymbol{\lambda_{k,p+1}^{Ex}}\bar{d}\xi_{\w_i}$ if $\bar{d}\xi_{\w_i}\neq0$ and $\bar{\Delta}(\bar{\delta}\eta_{\w_i})= \boldsymbol{\lambda_{k',p-2}^{Coex}}\bar{\delta}\eta_{\w_i}$ if $\bar{\delta}\eta_{\w_i}\neq0.$ We therefore obtain the following system:\begin{equation}\label{eq:systemeQuiDonnePetQpolynomes}
    \left\{
    \hspace{-0.3em}
    \begin{array}{r@{\quad}c@{\quad}l}
        Q''_{\w_i} + \frac{n-2p-1}{r}Q'_{\w_i} - \frac{\boldsymbol{\lambda_{k,p+1}^{\Ex}}}{r^2}Q_{\w_i} &=& 0 \\[3mm]
        P''_{\w_i} + \frac{n-2p+1}{r}P'_{\w_i} - \frac{n-2p+1 + \boldsymbol{\lambda_{k',p-2}^{\Coex}}}{r^2}P_{\w_i} &=& 0
    \end{array}
    \right.
\end{equation}

\begin{lemma}\label{lem:PetQpolynomes}
The solution of \eqref{eq:systemeQuiDonnePetQpolynomes} such that $Q_{\w_i}(r)$ vanishes and $P_{\w_i}(r)$ is well defined at $r=0$ are $Q_{\w_i}(r) = c_1 r^{k+p+1}$ and $P_{\w_i}(r) = c_2 r^{k'+p}$. Moreover, \eqref{eq:AWBW} implies $k=k'.$
\end{lemma}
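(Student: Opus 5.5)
The plan is to treat both equations of \eqref{eq:systemeQuiDonnePetQpolynomes} as Euler (equidimensional) ODEs. I would look for solutions of the form $r^\alpha$, keep only the root compatible with the regularity at $r=0$ given by \lemref{lm:P0Q0}, and then compare powers of $r$ in \eqref{eq:AWBW} to force $k=k'$.

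For $Q_{\w_i}$, substituting $r^\alpha$ gives the indicial equation
\[
\alpha^2+(n-2p-2)\alpha-(k+p+1)(k+n-p-1)=0 .
\]
The root $\alpha=k+p+1$ works, since $(k+p+1)\big((k+p+1)+(n-2p-2)\big)=(k+p+1)(k+n-p-1)$. The sum of the roots is $-(n-2p-2)$, so the other root is $-(k+n-p-1)$. This root is negative because $p\le n-2$.

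The two roots differ, since $2k+n>0$. So the general solution is $c_1r^{k+p+1}+\tilde c_1 r^{-(k+n-p-1)}$. The requirement $Q_{\w_i}(0)=0$ from \lemref{lm:P0Q0} (valid as $p\ge1$) forces $\tilde c_1=0$.

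For $P_{\w_i}$, the indicial equation is
\[
\alpha^2+(n-2p)\alpha-\big(n-2p+1+(k'+p-1)(k'+n-p+1)\big)=0 .
\]
Expanding $(k'+p-1)(k'+n-p+1)=(k'+p)(k'+n-p)+(k'+p)-(k'+n-p)-1$ shows that the constant term equals $-(k'+p)(k'+n-p)$. The roots are therefore $k'+p$ and $-(k'+n-p)$. The latter is negative, so finiteness of $P_{\w_i}$ at $r=0$ leaves $P_{\w_i}=c_2r^{k'+p}$.

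I need to handle the degenerate cases separately. If $\bar d\xi_{\w_i}=0$ or $\bar\delta\eta_{\w_i}=0$, the corresponding equation of \eqref{eq:DAWDQWFinale} gives no information. In that case I would go back to $(E_{\tann})$ and $(E_{\radd})$ directly. There $\xi_{\w_i}$ is then co-exact, so it is itself a $\bar\Delta$-eigenform, and the same Euler analysis applies with the appropriate eigenvalue. Writing the eigenvalue in the indexed form of \thref{thm:vplaplacien} gives the same exponents. I expect this bookkeeping to be the main obstacle: one must make sure the eigenvalue is labelled consistently for exact versus co-exact pieces, and use the isomorphism lemma ($\bar d$ maps co-exact eigenforms to exact ones with $\boldsymbol{\lambda^{\Coex}_{k,p}}=\boldsymbol{\lambda^{\Ex}_{k,p+1}}$) to match the indices.

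Finally, to get $k=k'$, I would substitute $Q_{\w_i}=c_1r^{k+p+1}$ and $P_{\w_i}=c_2r^{k'+p}$ into $(E_{\tann})$. The terms involving $Q_{\w_i}$ scale like $r^{k+p-1}$, while the coupling term $\frac{2P_{\w_i}}{r}\bar d\eta_{\w_i}$ scales like $r^{k'+p-1}$. Similarly, in $(E_{\radd})$ the $P_{\w_i}$-terms scale like $r^{k'+p-2}$ and the coupling term $\frac{2Q_{\w_i}}{r^3}\bar\delta\xi_{\w_i}$ like $r^{k+p-2}$.

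Since the identity holds for all $r\in\,]0,1[$, either the exponents agree, giving $k=k'$, or both groups vanish separately. In the latter case the coupling terms vanish ($c_2\bar d\eta_{\w_i}=0$ and $c_1\bar\delta\xi_{\w_i}=0$), and $P_{\w_i}$ and $Q_{\w_i}$ decouple. Then one of the two components can be reindexed, or it is trivial, so we may set $k=k'$ without loss of generality. Checking that this decoupled case is genuinely harmless is the second delicate point of the proof.
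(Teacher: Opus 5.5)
Your treatment of the two Euler equations is the paper's argument. You use the same indicial equations, get the same roots $k+p+1,\ -(k+n-p-1)$ and $k'+p,\ -(k'+n-p)$, and reject the negative roots the same way, via \lemref{lm:P0Q0} and boundedness at $r=0$. Your rewriting of the constant term of the $P$-equation as $-(k'+p)(k'+n-p)$ is a tidier route to the roots than the paper's discriminant computation. The problems are in the other two parts.

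\textbf{The step $k=k'$.} Comparing powers of $r$ in $(E_{\tann})$ and $(E_{\radd})$ is exactly what the paper's one-line ``plugging these values into $(E_{\tann})$'' amounts to. You correctly see that it only gives a dichotomy: either $k=k'$, or $c_2\bar d\eta_{\w_i}=0$ and $c_1\bar\delta\xi_{\w_i}=0$. Your way out of the second branch (``reindex, so we may set $k=k'$ without loss of generality'') is not an argument. No argument can close that branch, because there $k$ and $k'$ are genuinely unrelated.

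Here is a counterexample. Let $p\ge2$, and take $\eta\in\Omega^{p-1}(\mathbb{S}^{n-1})$ an exact eigenform with eigenvalue $\boldsymbol{\lambda^{\Ex}_{k',p-1}}=\boldsymbol{\lambda^{\Coex}_{k',p-2}}$ (e.g.\ $\eta=\bar d f$ with $f$ a spherical harmonic). Take $\xi\in\Omega^{p}(\mathbb{S}^{n-1})$ a co-exact eigenform with eigenvalue $\boldsymbol{\lambda^{\Coex}_{k,p}}$. Then $\bar d\xi\neq0$ and $\bar\delta\eta\neq0$, while both coupling terms vanish. Hence $r^{k'+p}\,dr\wedge\eta+r^{k+p+1}\xi$ satisfies \eqref{eq:AWBW} for every $k\neq k'$: each summand is separately harmonic, by your own exponent computation.

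So what can actually be proved is this: $k=k'$ as soon as $c_2\bar d\eta_{\w_i}\neq0$ or $c_1\bar\delta\xi_{\w_i}\neq0$. Otherwise $\w_i$ splits into two independent harmonic pieces. The paper's proof silently skips this branch as well. The correct fix is to restrict the ``Moreover'' claim, not to invoke a WLOG.

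\textbf{The degenerate cases.} Drop this paragraph. The indices $k$ and $k'$, and hence \eqref{eq:systemeQuiDonnePetQpolynomes}, only exist when $\bar d\xi_{\w_i}\neq0$ and $\bar\delta\eta_{\w_i}\neq0$, so the lemma makes no claim there. What you write there is also incorrect:
\begin{itemize}
\item $\bar d\xi_{\w_i}=0$ makes $\xi_{\w_i}$ closed, hence exact (not co-exact), since $H^p(\mathbb{S}^{n-1})=0$ for $1\le p\le n-2$.
\item $(E_{\tann})$ still contains the coupling term $\frac{2P_{\w_i}}{r}\bar d\eta_{\w_i}$. So $\xi_{\w_i}$ need not be an eigenform, and $Q_{\w_i}$ need not be a single power.
\end{itemize}
Indeed, in the paper's exact case one obtains $Q_{\w_2}(r)=\frac{r^{k+p}}{k+p}+\frac{r^{k+p+2}}{k+n-p}$, not a single power.
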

\begin{proof}
 Recall that a differential equation of the form\begin{equation}\label{eq:F} r^2F''(r)+arF'(r)+bF(r)=0, \ \text{on}\ [0,1]
    \end{equation} has $F(r)=r^\alpha$ (where $\alpha\geq1$) as a solution if and only if \begin{equation*}
    \left\{
    \hspace{-0.3em}
    \begin{array}{r@{\quad}c@{\quad}l@{\quad}l}
        b &=& 0 & \text{if } \alpha = 0 \\[0.15em]
        a + b &=& 0 & \text{if } \alpha = 1 \\[0.15em]
        \alpha^2 + (a - 1)\alpha + b &=& 0 & \text{if } \alpha \notin \{0, 1\}.
    \end{array}
    \right.
\end{equation*}In the last case, $\Delta=(a-1)^2-4b$ and $\alpha=\frac{-(a-1)\pm \sqrt{(a-1)^2-4b}}{2}$ provided that $\Delta>0$.

\begin{itemize}
    \item \underline{\bf{Computation of $Q$}:} In this case, $a_Q=n-2p-1$ and $b_Q=-\boldsymbol{\lambda_{k,p+1}^{Ex}}=-(k+p+1)(k+n-p-1).$ Hence, the discriminant is given by: \begin{align*}
        \Delta_Q&=(n-2p-2)^2+4(k+p+1)(k+n-p-1)\\
        &=n^2+4p^2+4-4n-4np+8p\\
        &+4(k+kp+k^2+n+np+nk-1-p-k-p-p^2-kp)\\
        &=n^2+4k^2+4nk=(n+2k)^2>0.
    \end{align*}
    Therefore we get the following roots: \[\alpha_{1,Q}=\frac{-n+2p+2+n+2k}{2}=k+p+1 \] \[\text{and } \alpha_{2,Q}=\frac{-n+2p+2-n-2k}{2}=-n-k+p+1.\] Since $k\geq0$ and $p\in\{1,\cdots,n-1\},$ $\alpha_{2,Q}<0$ is thus rejected because $Q(0)=0$ by \lemref{lm:P0Q0}. Therefore, $Q_{\w_2}(r)=r^{k+p+1}.$
    \item \underline{\bf{Computation of $P$}:} In this case, $a_P=n-2p+1$ and $b_P=-(n-2p+1 +\boldsymbol{\lambda_{k',p-2}^{Coex}})=-n+2p-1-(k'+p-1)(k'+n-p+1)$. Hence, the discriminant is given by: \begin{align*}
   \Delta_P&=(n-2p)^2+4n-8p+4+4(k'+p-1)(k'+n-p+1)\\
   &=n^2-4np+4p^2+4n-8p+4\\
   &\quad+4(k'n-k'p+k'{^2}+k'+np-p^2+k'p+p-n+p-k'-1)\\
   &=n^2+4k'n+k^2=(n+2k')^2>0. \end{align*}  Therefore we get the following roots: \[\alpha_{1,P}=\frac{-n+2p+n+2k'}{2}=p+k'\ ; \ \alpha_{2,P}=\frac{-n+2p-n-2k'}{2}=-n-k'+p.\] The root $\alpha_{2,P}<0$ and is therefore rejected. Thus $P_{\w_2}(r)=r^{k'+p}.$
\end{itemize} Plugging these values into $(E_{\tan})$ of \eqref{eq:AWBW} yields $k = k'$.
\end{proof}
Using \eqref{eq:w=W+H}, we formulate the boundary conditions of the NBS \eqref{eq:NBSspectre} in terms of $P_{\w_i}, Q_{\w_i}, \xi_{\w_i}$, and $\eta_{\w_i}$ for $i\in\{1,2\}$.
\begin{prop}\label{prop:conditions}
Following the same notations as above, we obtain the following expressions for the boundary conditions of the NBS \eqref{eq:NBSspectre}:\[
\left\{
\begin{aligned}
    \nu\lrcorner \w &= 0 \\[0.6em]
    \nu\lrcorner d\w &= 0 \\[0.6em]
    \nu\lrcorner\Delta \w &= 0 \\[0.6em]
    \nu \lrcorner d\Delta \w+\ell\iota^* \w &= 0
\end{aligned}
\right.
\iff
\left\{
\begin{aligned}
    P_{\w_2}(1)&=0 \text{ or } \eta_{\w_2}=0 \text{ on } \mathbb{S}^{n-1} \\[0.6em]
    Q'_{\w_2}(1)\xi_{\w_2}&=Q_{\w_1}(1)\xi_{\w_1} \\[0.6em]
    \eta_{\w_1}&=0 \text{ or } (2n-4p+4)P_{\w_1}(1)+4P'_{\w_1}(1)=0 \\[0.6em]
    \xi_{\w_1} \bigl[(4&-4p+2n)Q'_{\w_1}(1)+4Q''_{\w_1}(1) \bigr] \\
    &-2\ell Q_{\w_2}(1)\xi_{\w_2}=0.
\end{aligned}
\right.
\]
\end{prop}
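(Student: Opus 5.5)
The plan is to compute each of the four boundary quantities in \eqref{eq:NBSspectre} directly from the decomposition \eqref{eq:w=W+H}, $\w=-2\w_2+(r^2-1)\w_1$, with $\w_i=P_{\w_i}\,dr\wedge\eta_{\w_i}+Q_{\w_i}\xi_{\w_i}$ as in \eqref{eq:W=P+Q}. On $\mathbb{S}^{n-1}=\{r=1\}$ the inward normal is $\nu=-\partial_r$. Each computation splits into a radial part (containing $dr$) and a tangential part. We then restrict to $r=1$, where the factor $r^2-1$ vanishes but its radial derivative $2r$ equals $2$. The global sign of $\nu$ is harmless for the three homogeneous conditions; in the last one we track it, since it fixes the sign in front of $\ell$. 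We use the conditions in order, so that each earlier condition kills a parasitic term in the next.

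\textbf{First condition.} Since $r^2-1=0$ at $r=1$, we get $\nu\lrcorner\w=2P_{\w_2}(1)\eta_{\w_2}$. This vanishes if and only if $P_{\w_2}(1)=0$ or $\eta_{\w_2}=0$.

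\textbf{Second condition.} We write $d\w=-2d\w_2+2r\,dr\wedge\w_1+(r^2-1)d\w_1$ and use
\[
d(P\,dr\wedge\eta+Q\xi)=dr\wedge(Q'\xi-P\bar d\eta)+Q\bar d\xi .
\]
Moreover $\partial_r\lrcorner(dr\wedge\w_1)$ restricts on the boundary to $Q_{\w_1}\xi_{\w_1}$. Hence at $r=1$,
\[
\partial_r\lrcorner d\w=-2\bigl(Q'_{\w_2}(1)\xi_{\w_2}-P_{\w_2}(1)\bar d\eta_{\w_2}\bigr)+2Q_{\w_1}(1)\xi_{\w_1}.
\]
The term $P_{\w_2}(1)\bar d\eta_{\w_2}$ vanishes by the first condition, which leaves $Q'_{\w_2}(1)\xi_{\w_2}=Q_{\w_1}(1)\xi_{\w_1}$.

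\textbf{Third condition.} Since $\w_1$ and $\w_2$ are harmonic, we have
\[
\Delta\w=\Delta\bigl((r^2-1)\w_1\bigr)=(\Delta r^2)\w_1-2\nabla_{\nabla r^2}\w_1=-2n\w_1-4r\nabla_{\partial_r}\w_1
\]
with the geometer's sign convention. To compute $\nabla_{\partial_r}$ we use $\mathcal{L}_{\partial_r}\xi=\mathcal{L}_{\partial_r}\eta=0$ together with $\nabla\partial_r=\frac1r(\mathrm{Id}-dr\otimes\partial_r)$. This gives
\[
\nabla_{\partial_r}(Q\xi)=\Bigl(Q'-\tfrac{p}{r}Q\Bigr)\xi,\qquad \nabla_{\partial_r}(P\,dr\wedge\eta)=\Bigl(P'-\tfrac{p-1}{r}P\Bigr)dr\wedge\eta .
\]
Consequently $\Delta\w=R\,dr\wedge\eta_{\w_1}+T\,\xi_{\w_1}$, where
\[
R=-(2n-4p+4)P_{\w_1}-4rP'_{\w_1},\qquad T=-(2n-4p)Q_{\w_1}-4rQ'_{\w_1}.
\]
As a cross-check, this should agree with \lemref{lm:delta=deltat+deltar} applied to $(r^2-1)\w_1$. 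Evaluating at $r=1$, $\nu\lrcorner\Delta\w=0$ becomes $\eta_{\w_1}=0$ or $(2n-4p+4)P_{\w_1}(1)+4P'_{\w_1}(1)=0$.

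\textbf{Fourth condition.} Using the same formula for $d$, we get $\partial_r\lrcorner d\Delta\w=T'\xi_{\w_1}-R\,\bar d\eta_{\w_1}$. The term $R(1)\bar d\eta_{\w_1}$ vanishes by the third condition. Differentiating $T$ and setting $r=1$ gives
\[
T'(1)=-\bigl[(2n-4p+4)Q'_{\w_1}(1)+4Q''_{\w_1}(1)\bigr],
\]
so that
\[
\nu\lrcorner d\Delta\w=\bigl[(2n-4p+4)Q'_{\w_1}(1)+4Q''_{\w_1}(1)\bigr]\xi_{\w_1}.
\]
Combined with $\iota^*\w=-2Q_{\w_2}(1)\xi_{\w_2}$, this yields the last line of the proposition.

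\textbf{Converse and main obstacle.} Each step is an equivalence once the earlier conditions are imposed, which gives both directions of the proposition. The main obstacle is the third step: correctly computing $\nabla_{\partial_r}$ on forms written with $r$-independent $\eta,\xi$, where the $-\frac{p}{r}$ and $-\frac{p-1}{r}$ corrections produce the $-4p$ and $+4$ in the coefficients. A close second is keeping the sign conventions for $\Delta$ and $\nu$ consistent throughout. If needed, I would verify the coefficients by applying \lemref{lm:delta=deltat+deltar} to $(r^2-1)\w_1$ and using \eqref{eq:systemeQuiDonnePetQpolynomes} to eliminate the second derivatives.
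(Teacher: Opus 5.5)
Your proposal is correct and follows essentially the same route as the paper. Both compute the four boundary quantities directly from $\omega=-2\omega_2+(r^2-1)\omega_1$ at $r=1$ with $\nu=-\partial_r$, using $\nabla_{\partial_r}\xi=-\frac{p}{r}\xi$ and $\Delta\omega=-2n\,\omega_1-4\nabla_x\omega_1$, and both use the first and third conditions to kill the $P_{\omega_2}(1)\bar{d}\eta_{\omega_2}$ and $R(1)\bar{d}\eta_{\omega_1}$ terms; the only cosmetic difference is that you write $\Delta\omega=R\,dr\wedge\eta_{\omega_1}+T\,\xi_{\omega_1}$ globally and read the last condition off $T'(1)$, whereas the paper computes $d(\nabla_x\omega_1)$ separately, and the resulting coefficients agree.
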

\begin{remark}\label{rk:condition+}
    It follows from the proof this proposition is valid for $p=n-1.$
\end{remark}
Before proving \propref{prop:conditions}, we first establish the following two lemmas, which will be used in the computation:
\begin{lemma}
  Let $\xi\in\Omega^p(\mathbb{B}^n\setminus\{0\})$ be a $p$-form on $\mathbb{B}^n\setminus\{0\}$, that in polar coordinates can be identified to a $p$-form on the sphere $\mathbb{S}^{n-1}$. Then on $\mathbb{B}^n\setminus\{0\}$, we have \begin{equation}\label{eq:nabla=pr}
       \nabla^{\mathbb{B}^n}_{\partial_r}\xi=-\frac{p}{r}\xi.
   \end{equation}  
\end{lemma}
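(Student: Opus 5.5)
The plan is to compare the covariant derivative with the Lie derivative along the radial field $\partial_r=\frac{x}{|x|}$. This works because the form $\xi$ is "constant in $r$" exactly in the Lie-derivative sense.

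Precisely, let $\pi:\mathbb{B}^n\setminus\{0\}\to\mathbb{S}^{n-1}$, $\pi(x)=x/|x|$. By hypothesis the form on $\mathbb{B}^n\setminus\{0\}$ is $\pi^*\xi$, which we still denote $\xi$. The flow of $\partial_r$ is $\varphi_t(x)=x+t\frac{x}{|x|}$, and it satisfies $\pi\circ\varphi_t=\pi$. Hence $\varphi_t^*\xi=\xi$, and therefore $\mathcal{L}_{\partial_r}\xi=0$. The same argument gives $\partial_r\lrcorner\xi=0$, since $d\pi(\partial_r)=0$.

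Next we use the standard identity valid for a torsion-free connection: for any $p$-form $\xi$ and vector fields $X,Y_1,\dots,Y_p$,
\[
(\mathcal{L}_X\xi)(Y_1,\dots,Y_p)=(\nabla_X\xi)(Y_1,\dots,Y_p)+\sum_{j=1}^p\xi(Y_1,\dots,\nabla_{Y_j}X,\dots,Y_p).
\]
This follows by expanding $\mathcal{L}_X$ with $[X,Y_j]=\nabla_XY_j-\nabla_{Y_j}X$. Applied with $X=\partial_r$, it gives
\[
(\nabla_{\partial_r}\xi)(Y_1,\dots,Y_p)=-\sum_{j=1}^p\xi(Y_1,\dots,\nabla_{Y_j}\partial_r,\dots,Y_p).
\]

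It remains to compute $\nabla_Y\partial_r$ in flat space. Differentiating $x/|x|$ in the direction $Y$ gives
\[
\nabla_Y\partial_r=\frac{1}{r}\Bigl(Y-\bigl\langle Y,\tfrac{x}{|x|}\bigr\rangle\tfrac{x}{|x|}\Bigr)=\frac1r Y^T,
\]
which is the Hessian of $r$; this is the same decomposition $X^T$ as in the proof of \lemref{lm:P0Q0}. Since $\partial_r\lrcorner\xi=0$, we may replace $Y_j^T$ by $Y_j$ inside $\xi$. Each of the $p$ terms therefore contributes $\frac1r\,\xi(Y_1,\dots,Y_p)$, and we obtain $\nabla_{\partial_r}\xi=-\frac{p}{r}\xi$, which is \eqref{eq:nabla=pr}.

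The only point requiring care is the sign convention in the Lie/covariant derivative identity and the identification "independent of $r$" $\Leftrightarrow$ $\mathcal{L}_{\partial_r}\xi=0$. As a sanity check, one can verify the formula in Cartesian coordinates for $p=1$ with $\xi=\pi^*\bar{d}\theta$. Here the coefficients of $\xi$ are homogeneous of degree $-1$, so Euler's relation gives $\nabla_{\partial_r}\xi=-\frac1r\xi$. In general, the Cartesian coefficients of $\pi^*\xi$ are homogeneous of degree $-p$, and Euler's relation yields the result directly; this is an alternative one-line proof I would mention.
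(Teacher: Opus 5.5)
Your proof is correct. Underneath it uses the same mechanism as the paper, but packaged differently.

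The paper writes $\xi=v_1^\flat\wedge\cdots\wedge v_p^\flat$. For $Y$ tangent to the spheres it computes $(\nabla_{\partial_r}v_i^\flat)(Y)=\partial_r(v_i^\flat(Y))-v_i^\flat(\nabla_{\partial_r}Y)=-\frac1r v_i^\flat(Y)$, then concludes by the Leibniz rule. The two facts used there, $\partial_r(v_i^\flat(Y))=0$ and $\nabla_{\partial_r}Y=\frac1r Y$, tacitly require $Y$ to be the radially invariant extension of a field on $\mathbb{S}^{n-1}$. That is exactly the case $[\partial_r,Y]=0$, where $\nabla_{\partial_r}Y=\nabla_Y\partial_r$. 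The paper also leaves the evaluation on $\partial_r$ implicit.

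Your identity relating $\mathcal{L}_{\partial_r}$ and $\nabla_{\partial_r}$ combines both facts into $\mathcal{L}_{\partial_r}\xi=0$ plus the Hessian computation $\nabla_Y\partial_r=\frac1r Y^T$. This buys several things:
\begin{itemize}
\item it holds for arbitrary test fields;
\item it needs no decomposition into 1-forms;
\item it handles the radial slot through $\partial_r\lrcorner\xi=0$;
\item it makes precise that ``identified with a form on the sphere'' means the pullback $\pi^*\xi$.
\end{itemize}
The paper's version is more hands-on; yours is more invariant.

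Your Euler-homogeneity remark is the genuinely different and shortest route. In Cartesian coordinates $\nabla$ just differentiates coefficients, and the coefficients of $\pi^*\xi$ are homogeneous of degree $-p$, so $\nabla_{\partial_r}\xi=-\frac{p}{r}\xi$ follows at once.
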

\begin{proof}
    Let $\xi=v_1^\flat\wedge\ldots\wedge v_p^\flat,$ with $v_i\in T\mathbb{S}^{n-1}$. For $Y\in T\mathbb{S}^{n-1}$ we have \begin{align*}
(\nabla_{\partial_r}^{\mathbb{B}^n}v_i^\flat)(Y)
&= \underbrace{\partial_r\big(v_i^\flat(Y)\big)}_{0}
 - v_i^\flat\Big(\underbrace{\nabla_{\partial_r}^{\mathbb{B}^n} Y}_{\frac{1}{r}Y}\Big) \\
&= -\frac{1}{r} v_i^\flat(Y).
\end{align*} Therefore, \begin{align*}
(\nabla^{\mathbb{B}^n}_{\partial_r}\xi)_{(r,x)}
&= \sum_{i=1}^p 
v_1^\flat \wedge \cdots \wedge 
\nabla_{\partial_r}^{\mathbb{B}^n} v_i^\flat \wedge \cdots \wedge v_p^\flat \\
&= -\sum_{i=1}^p \frac{1}{r}
\bigl( v_1^\flat \wedge \cdots \wedge v_p^\flat \bigr) \\
&= -\frac{p}{r}\,\xi.
\end{align*}
\end{proof}
Recall that we denote by $\nabla$ the Euclidean connection on $\mathbb{R}^n$ and by $\bar{\nabla}$ the Riemannian connection on $\mathbb{S}^{n-1}$.
\begin{lemma}
Let $\xi\in\Omega^p(\mathbb{B}^n\setminus\{0\})$ be a $p$-form that in polar coordinates can be identified to a $p$-form on the sphere $\mathbb{S}^{n-1}.$ Then for any vector field $X$ on $\mathbb{B}^n\setminus\{0\},$ which is tangent to $\mathbb{S}^{n-1}=\partial\mathbb{B}^n$, we have on $\mathbb{S}^{n-1}:$
\begin{equation}\label{eq:nablax}
\nabla_X\xi=\nu^\flat\wedge(SX\lrcorner\xi)+\bar{\nabla}_X\iota^*\xi.\end{equation} 
\begin{proof} Let $X_1,\cdots,X_p$ be vector fields on $\mathbb{S}^{n-1}$. We compute
\begin{align*}
    (\nu\lrcorner\nabla_X\xi)(X_2,\cdots,X_p)&=(\nabla_X\xi)(\nu,X_2,\cdots,X_p)\\
    &=X\underbrace{\left(\xi(\nu,X_2,\cdots,X_p)\right)}_{0}-\xi(\nabla_X\nu,X_2,\cdots,X_p)\\
    &\quad-\sum_{j=2}^p\underbrace{\xi(\nu,\cdots,\nabla_XX_j,\cdots,X_p)}_{0}\\
    &=(SX\lrcorner\xi)(X_2,\cdots,X_p).
\end{align*}Since $\partial_r\lrcorner\xi=0$, we have:
\begin{align*}
(\nabla_X \xi)(X_1, \cdots, X_p) &= X(\xi(X_1, \cdots, X_p)) - \sum_{j=1}^p \xi(X_1, \cdots, \nabla_X X_j, \cdots, X_p) \\
&= X(\xi(X_1, \cdots, X_p)) \\
&\quad- \sum_{j=1}^p \xi(X_1, \cdots, \bar{\nabla}_X X_j + \langle SX, X_j \rangle \nu, \cdots, X_p\\&= X(\xi(X_1, \cdots, X_p)) - \sum_{j=1}^p \xi(X_1, \cdots, \bar{\nabla}_X X_j, \cdots, X_p)\\
&\quad - \sum_{j=1}^p \langle SX, X_j \rangle \xi(X_1, \cdots, \nu, \cdots, X_p)\\
&= (\bar{\nabla}_X \xi)(X_1, \cdots, X_p).
\end{align*}
Therefore, \begin{align*}
    \nabla_X\xi&=\nu^\flat\wedge(\nu\lrcorner\nabla_X\xi)+\iota^*(\nabla_X\xi)\\
    &=\nu^\flat\wedge(SX\lrcorner\xi)+\bar{\nabla}_X\xi.
\end{align*}
\end{proof}
\end{lemma}
\begin{proof}[Proof of \propref{prop:conditions}]
We now begin the computation of the boundary conditions.\begin{itemize}[label=\textbullet]
    \item $\underline{\bm{\nu \lrcorner \w=0 :}}$ 
    \begin{align*}
        \nu \lrcorner \w = 0 
        &\iff -2 \nu \lrcorner {\w_2}+ (\underbrace{\lvert x \rvert^2}_{1} - 1) \nu \lrcorner {\w_1} = 0 \\
        &\iff \nu \lrcorner {\w_2}= 0.
    \end{align*}
Then, using \eqref{eq:W=P+Q}, we get
\begin{align}
    \nu\lrcorner {\w_2}=0 
    &\iff Q_{\w_2}\underbrace{\nu\lrcorner\xi_{\w_2}}_{0}+ P_{\w_2}\nu\lrcorner(dr\wedge\eta_{\w_2})=0  \nonumber\\  \nonumber
    &\iff P_{\w_2}\underbrace{dr(\nu)}_{-1}\eta_{\w_2}=0\\ \nonumber
    &\iff P_{\w_2}\eta_{\w_2}=0 \ \text{on}\ \partial M\\ 
    &\iff P_{\w_2}(1)=0 \ \text{or}\ \eta_{\w_2}=0\ \text{on}\ \mathbb{S}^{n-1}. \label{eq:condition1}
\end{align}
\item $\underline{\bm{\nu \lrcorner d\w =0:}}$ 
    \begin{align*}
        \nu\lrcorner d\w=0 
        &\iff -2\nu\lrcorner d {\w_2}+ \nu\lrcorner d (\lvert x\rvert ^2-1){\w_1}=0\\
        &\iff -2\nu\lrcorner d {\w_2}+ 2\nu\lrcorner (x^\flat\wedge {\w_1}) + (\underbrace{\lvert x \rvert^2}_{1} - 1) \nu \lrcorner d {\w_1} = 0\\
        &\iff -2\nu\lrcorner d {\w_2}+ 2\underbrace{x^\flat(\nu)}_{-1}({\w_1})-2x^\flat\wedge (\nu\lrcorner {\w_1})=0\\
        &\iff -2\nu\lrcorner d {\w_2}-2{\w_1}-2\underbrace{x^\flat}_{-\nu^\flat}\wedge(\nu\lrcorner {\w_1})=0\\
        &\iff \nu\lrcorner d {\w_2}+ \iota^* {\w_1}=0.
    \end{align*}
Using \eqref{eq:W=P+Q} again, we get: \begin{align*}
    d {\w_2}&=d Q_{\w_2}\wedge\xi_{\w_2}+Q_{\w_2}d\xi_{\w_2}+P'_{\w_2}(r) \underbrace{dr\wedge dr}_{0}\wedge \eta_{\w_2}\\
    &+P_{\w_2}\underbrace{d(dr)}_{0}\wedge\eta_{\w_2}-P_{\w_2}dr\wedge d\eta_{\w_2}\\
    &=d Q_{\w_2}\wedge\xi_{\w_2}+Q_{\w_2}d\xi_{\w_2}-P_{\w_2}dr\wedge d\eta_{\w_2}.
\end{align*} 
We now compute $d\xi_{\w_2}$: \begin{align*}
    d\xi_{\w_2}&=\sum_{j=1}^n e_j^\flat\wedge\nabla_{e_j}\xi_{\w_2}\\
    &=\sum_{j=1}^{n-1} e_j^\flat\wedge\nabla_{e_j}\xi_{\w_2}+ dr\wedge{\nabla_{\partial_r}\xi_{\w_2}}\\
    &\hspace{-0.8em}\overset{\eqref{eq:nabla=pr}}{=}\sum_{j=1}^{n-1} e_j^\flat\wedge\nabla_{e_j}\xi_{\w_2}-\frac{p}{r} dr\wedge\xi_{\w_2}\\
    & \hspace{-0.8em}\overset{\eqref{eq:nablax}}{=}\sum_{j=1}^{n-1}\left(e_j^\flat\wedge(\nu^\flat \wedge(Se_j\lrcorner\xi_{\w_2}))+e_j^\flat\wedge\bar{\nabla}_{e_j}\iota^*\xi_{\w_2}-e_j^\flat\wedge(Se_j)^\flat\wedge(\underbrace{\nu\lrcorner\xi_{\w_2}}_{0})\right)\\
    &\qquad-\frac{p}{r} dr\wedge\xi_{\w_2}\\
    &=-\nu^\flat\wedge\underbrace{(\sum_{j=1}^{n-1}e_j^\flat\wedge(Se_j\lrcorner\xi_{\w_2}))}_{S^{[p]}\xi_{\w_2}}+\bar{d}\iota^*\xi_{\w_2}-\frac{p}{r} dr\wedge\xi_{\w_2}\\
    &=-\frac{p}{r}\nu^\flat\wedge\xi_{\w_2}+\bar{d}\iota^*\xi_{\w_2}-\frac{p}{r} dr\wedge\xi_{\w_2}\\
    &=-\frac{p}{r}(\nu^\flat+\underbrace{dr}_{-\nu^\flat})\wedge\xi_{\w_2}+\bar{d}\iota^*\xi_{\w_2}\\
    &=\bar{d}\iota^*\xi_{\w_2}. 
\end{align*}
Similarly, we obtain ${d}\eta_{\w_2}=\bar{d}\iota^*\eta_{\w_2}.$  Hence, we have\begin{align*}
    d{\w_2}=dr\wedge (Q'_{\w_2}(r)\xi_{\w_2}-P_{\w_2}\bar{d}\iota^*\eta_{\w_2})+Q_{\w_2}\bar{d}\iota^*\xi_{\w_2}.
\end{align*}
Consequently, \begin{equation*}
    \nu\lrcorner d{\w_2}=-\iota^*{\w_1} \iff Q'_{\w_2}(1)\xi_{\w_2}-P_{\w_2}(1)\bar{d}\iota^*\eta_{\w_2}=Q_{\w_1}(1)\xi_{\w_1}.
\end{equation*}
Finally, using \eqref{eq:condition1}, we get the following condition\begin{equation}
    \nu\lrcorner d{\w_2}=-\iota^*{\w_1} \iff Q'_{\w_2}(1)\xi_{\w_2}=Q_{\w_1}(1)\xi_{\w_1}.\label{eq:condition2}
\end{equation}
\item $\underline{\bm{\nu \lrcorner \Delta\w =0:}}$ 
 We start by computing $\Delta\w.$ \begin{align}\label{eq:danslapreuvedesconditions}
        \Delta\w&=-2\underbrace{\Delta {\w_2}}_{0}+\Delta\left((\lvert x \rvert^2 - 1){\w_1}\right)\nonumber\\ \nonumber
        &=\Delta(\lvert x\rvert^2-1)\cdot {\w_1}-2\nabla_{\nabla(\lvert x\rvert^2-1)}{\w_1}+(\lvert x\rvert^2-1)\underbrace{\Delta {\w_1}}_{0}\nonumber\\
        &=-2n\cdot {\w_1}-4\nabla_x {\w_1}.
    \end{align}
    Hence we get, \begin{align*}
        \nu\lrcorner\Delta\w=0 &\iff -2n\nu\lrcorner {\w_1}+4\nu\lrcorner\nabla_\nu {\w_1}=0. \\
   \end{align*}
 We evaluate each term separately. We begin with $\nabla_\nu {\w_1}$. \begin{align}\label{eq:calculConditionsdebord}
       \nabla_\nu {\w_1}=-\nabla_{\partial_r} {\w_1}&=-\nabla_{\partial_r}(P_{\w_1}dr\wedge \eta_{\w_1}+Q_{\w_1}\xi_{\w_1})_{|r=1}\nonumber\\\nonumber
       &=-P'_{\w_1}(1)dr\wedge\eta_{\w_1}-Q'_{\w_1}(1)\xi_{\w_1}-Q_{\w_1}(1)\nabla_{\partial_r}\xi_{\w_1}\\ \nonumber
       &\quad-P_{\w_1}(1)dr\wedge\nabla_{\partial_r}\eta_{\w_1}\nonumber\\ \nonumber 
       &\overset{\eqref{eq:nabla=pr}}{=}-P'_{\w_1}(1)dr\wedge\eta_{\w_1}-Q'_{\w_1}(1)\xi_{\w_1}+pQ_{\w_1}(1)\xi_{\w_1}\\ \nonumber
       &\quad+P_{\w_1}(1)dr\wedge(p-1)\eta_{\w_1} \nonumber\\\nonumber
       &=\xi_{\w_1}(-Q'_{\w_1}(1)+pQ_{\w_1}(1))\\
       &\quad+dr\wedge\eta_{\w_1}(-P'_{\w_1}(1)+P_{\w_1}(1)(p-1)).
   \end{align} 
   This implies that $\nu\lrcorner\nabla_\nu {\w_1}=\eta_{\w_1}(P'_{\w_1}(1)-P_{\w_1}(1)(p-1)).$
   We also determine \[\nu\lrcorner {\w_1}=-\partial_r\lrcorner(Q_{\w_1}(r)\xi_{\w_1}+P_{\w_1}(r) dr\wedge\eta_{\w_1})=-P_{\w_1}(1)\eta_{\w_1}.\]
  Therefore, we conclude the following \begin{align} 
       \nu\lrcorner\Delta\w=0 &\iff 2nP_{\w_1}(1)\eta_{\w_1}+4\eta_{\w_1}(P'_{\w_1}(1)-P_{\w_1}(1)(p-1))=0\nonumber \\ \nonumber
       &\iff \eta_{\w_1}\big((2n-4p+4)P_{\w_1}(1)+4P'_{\w_1}(1)\big)=0\nonumber\\ 
       &\iff \eta_{\w_1}=0\ \text{or}\ (2n-4p+4)P_{\w_1}(1)+4P'_{\w_1}(1)=0.\label{eq:condition3}
   \end{align}
 \item $\underline{\bm{\nu \lrcorner d\Delta\w+\ell\iota^*\w =0:}}$
     By \eqref{eq:danslapreuvedesconditions}, we have \begin{align*}
          \nu \lrcorner d\Delta\w+\ell\iota^*\w=0&\iff -2n\nu\lrcorner d {\w_1}-4\nu\lrcorner d(\nabla_x {\w_1})\\
          &\qquad\quad-2\ell\iota^*{\w_2}+\ell\iota^*(\underbrace{\lvert x\rvert^2}_{1}-1){\w_1}=0\\
          &\iff 2n\partial_r\lrcorner d {\w_1}+4\partial_r\lrcorner d(\nabla_x {\w_1})-2\ell\iota^*{\w_2}=0.
      \end{align*}
      We compute each term individually: \begin{align*}
    d {\w_1}&=Q'_{\w_1} dr\wedge\xi_{\w_1} +Q_{\w_1} \bar{d}\iota^*\xi_{\w_1}+P'_{\w_1}(r) \underbrace{dr\wedge dr}_{0}\wedge \eta_{\w_1}\\
    &\quad+P_{\w_1}\underbrace{d(dr)}_{0}\wedge\eta_{\w_1}-P_{\w_1}dr\wedge d\eta_{\w_1}\\
    &= Q'_{\w_1}dr\wedge\xi_{\w_1} +Q_{\w_1} \bar{d}\iota^*\xi_{\w_1}-P_{\w_1}dr\wedge\bar{d}\iota^*\eta_{\w_1}\\
    &=dr\wedge\left(Q'_{\w_1}\xi_{\w_1}-P_{\w_1}\bar{d}\iota^*\eta_{\w_1}\right)+Q_{\w_1}\bar{d}\iota^*\xi_{\w_1}.
\end{align*}
Hence, we get \begin{align*}
    \nu\lrcorner d{\w_1}=-\partial_r\lrcorner d{\w_1}=-Q'_{\w_1}(1)\xi_{\w_1} +P_{\w_1}(1)\bar{d}\iota^*\eta_{\w_1}.
\end{align*}
Next, we compute $\nu\lrcorner d(\nabla_x {\w_1})$. 
\begin{align*}
\nabla_x{\w_1} &= r\cdot\nabla_{\partial_r}{\w_1} \\
&\hspace{-3mm}\overset{\eqref{eq:calculConditionsdebord}}{=} -r\cdot\Bigg[ \xi_{\w_1}\left(-Q'_{\w_1}(r)+\frac{p}{r}Q_{\w_1}(r)\right) \\
&\phantom{={}} \qquad + dr\wedge\eta_{\w_1}\left(-P'_{\w_1}(r)+P_{\w_1}(r)\frac{p-1}{r}\right) \Bigg] \\
&= \xi_{\w_1}(rQ'_{\w_1}(r)-pQ_{\w_1}(r)) \\
&\quad - dr\wedge\eta_{\w_1}(-rP'_{\w_1}(r)+P_{\w_1}(r)(p-1)).
\end{align*}
We still need to find $d(\nabla_x {\w_1}).$ 
\begin{align*}
    d(\nabla_x {\w_1})&=\left((rQ'_{\w_1}(r))'-pQ'_{\w_1}(r)\right)dr\wedge\xi_{\w_1}+(rQ'_{\w_1}(r)-pQ_{\w_1}(r))d\xi_{\w_1}\\
    &+dr\wedge d\eta_{\w_1}(-rP'_{\w_1}(r)+P_{\w_1}(r)(p-1)).
\end{align*}
Thus, we get $$\begin{aligned} \partial_r\lrcorner d(\nabla_x {\w_1}) &= \left(-pQ'_{\w_1}(1)+Q'_{\w_1}(1)+Q''_{\w_1}(1)\right)\xi_{\w_1} \\ &\quad + \left(-P'_{\w_1}(1)+P_{\w_1}(1)(p-1)\right)\bar{d}\iota^*\eta_{\w_1}. \end{aligned}$$
Therefore, we conclude that: \begin{align*}
    \nu \lrcorner d\Delta\w+\ell\iota^*\w=0&\iff 2nQ'_{\w_1}(1)\xi_{\w_1}-2nP_{\w_1}(1)\bar{d}\iota^*\eta_{\w_1}\\
    &\qquad \quad+4\xi_{\w_1}\left((1-p)Q'_{\w_1}(1)+Q''_{\w_1}(1)\right)\nonumber\\\nonumber
   & \qquad \quad+4\left(-P'_{\w_1}(1)+P_{\w_1}(1)(p-1)\right)\bar{d}\iota^*\eta_{\w_1}\\
   &\qquad \quad-2\ell Q_{\w_2}(1)\iota^*\xi_{\w_2}=0\nonumber\\ \nonumber
    &\iff \xi_{\w_1}\left((4-4p+2n)Q'_{\w_1}(1)+4Q''_{\w_1}(1)\right)\\
    &\qquad \quad +\bar{d}\iota^*\eta_{\w_1}\left((-2n+4p-4)P_{\w_1}(1)-4P'_{\w_1}(1)\right)\\
    & \qquad \quad-2\ell Q_{\w_2}(1)\iota^*\xi_{\w_2}=0.
\end{align*}
Finally, using \eqref{eq:condition3}, we get the following boundary condition: \begin{equation}\label{eq:condition4}
\begin{split}
     \nu \lrcorner d\Delta\w+\ell \iota^*\w=0 &\iff \xi_{\w_1}\left((4-4p+2n)Q'_{\w_1}(1)+4Q''_{\w_1}(1)\right) \\
     &\quad \qquad -2\ell Q_{\w_2}(1)\xi_{\w_2}=0.
\end{split}
\end{equation}
\end{itemize} This concludes the proof of \propref{prop:conditions}. \end{proof}

\begin{lemma}\label{lem:xinonul}
The $p$-form $\xi_{\w_2}$ is non-zero.
\end{lemma}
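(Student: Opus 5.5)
We argue by contradiction and assume $\xi_{\w_2}=0$. Then $\w_2=P_{\w_2}\,dr\wedge\eta_{\w_2}$, and \lemref{lm:wnonnul} forces $P_{\w_2}\eta_{\w_2}\neq 0$. Since $P_{\w_2}(r)=c_2r^{k+p}$ by \lemref{lem:PetQpolynomes}, we get $c_2\neq0$ and $P_{\w_2}(1)\neq0$. The first condition of \propref{prop:conditions} then requires $\eta_{\w_2}=0$ on $\mathbb{S}^{n-1}$. This gives $\w_2=0$, contradicting \lemref{lm:wnonnul}.

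To make this airtight, we should check that the first boundary condition really forces $\eta_{\w_2}=0$ once $P_{\w_2}(1)\neq0$. This follows from the computation \eqref{eq:condition1}: $\nu\lrcorner\w_2=P_{\w_2}(1)\eta_{\w_2}$ up to sign, and it must vanish identically on the sphere. The one subtle point is that the polynomial form of $P_{\w_2}$ was derived under the assumption $\bar\delta\eta_{\w_2}\neq0$. If instead $\bar\delta\eta_{\w_2}=0$, the ODE for $P$ is not obtained from \eqref{eq:DAWDQWFinale}.

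We do not actually need the explicit form of $P$. The boundary condition $P_{\w_2}(1)\eta_{\w_2}=0$ alone suffices: either $\eta_{\w_2}=0$, or $P_{\w_2}(1)=0$.

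In the second case we use harmonicity. With $\xi_{\w_2}=0$, the equation $(E_{\tann})$ of \eqref{eq:AWBW} reduces to $P_{\w_2}\bar d\eta_{\w_2}=0$, so $\eta_{\w_2}$ is closed. The equation $(E_{\radd})$ becomes a scalar eigen-equation: $\bar\Delta\eta_{\w_2}=c(r)\eta_{\w_2}$ on the set where $P\neq0$. Hence $\eta_{\w_2}$ is a closed eigenform. On the sphere with $1\le p-1\le n-2$ it is then exact, with eigenvalue $\boldsymbol{\lambda^{\Ex}_{k,p-1}}$. In the case $p=1$ it is a function, and that case is handled directly.

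The radial ODE for $P$ is again of Euler type \eqref{eq:F}. Regularity at $0$ (\lemref{lm:P0Q0}) selects the positive root, so $P_{\w_2}=c\,r^{\alpha}$ with $\alpha>0$. Therefore $P_{\w_2}(1)=c$, and $P_{\w_2}(1)=0$ forces $P_{\w_2}\equiv0$, i.e.\ $\w_2=0$. This again contradicts \lemref{lm:wnonnul}.

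The main obstacle is this last case, namely justifying that $P_{\w_2}$ is a pure power of $r$ when $\bar\delta\eta_{\w_2}=0$. We plan to handle it by computing the discriminant as in the proof of \lemref{lem:PetQpolynomes}. With the exact eigenvalue $(k+p-1)(k+n-p+1)$ and $b=-(n-2p+1+\lambda)$, the discriminant is positive and the negative root is rejected, exactly as in that proof. Should this route fail, the alternative is an integration-by-parts argument, sketched here without full verification. If $\w_2=P\,dr\wedge\eta$ is harmonic with $\nu\lrcorner\w_2=0$ and $\iota^*\w_2=0$, then by \eqref{eq:IPP2} (or the Green formula for $\Delta$) we would get $d\w_2=\delta\w_2=0$. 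The form $\w_2$ would then lie in $H_{\Abs}(\mathbb{B}^n)=\{0\}$, which again gives a contradiction.
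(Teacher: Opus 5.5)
Your proposal is correct and follows the paper's own proof. From $\xi_{\omega_2}=0$, $(E_{\tann})$ gives $\bar d\eta_{\omega_2}=0$, and $(E_{\radd})$ makes $\eta_{\omega_2}$ a closed eigenform (exact for $p\ge 2$, constant for $p=1$). The Euler equation together with the regularity of Lemma~\ref{lm:P0Q0} then forces $P_{\omega_2}=c\,r^{k+p}$, which is incompatible with $P_{\omega_2}(1)\eta_{\omega_2}=0$ unless $\omega_2=0$; your split into the cases $\eta_{\omega_2}=0$ and $P_{\omega_2}(1)=0$ is only a reorganisation of that argument. Your fallback also works and is shorter: since $\iota^*\omega_2=0$ and $\nu\lrcorner\omega_2=0$, the harmonic form $\omega_2$ vanishes identically on $\partial\mathbb{B}^n$, so its Euclidean coefficients are harmonic functions with zero boundary values and $\omega_2=0$ without any separation of variables.
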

\begin{proof}
We suppose by contradiction that $\xi_{\w_2}=0$, then $\eta_{\w_2}\neq 0$ and $P_{\w_2}\neq 0$ since $\w_2\neq0$ by \lemref{lm:wnonnul}. By \eqref{eq:AWBW} we have that $\bar{d}\eta_{\w_2}=0$ and $\bar{\Delta}\eta_{\w_2}=\lambda\eta_{\w_2}$ for some $\lambda\in\mathbb{R}$. In addition, $P''_{\w_2}+\frac{n-2p+1}{r}P'_{\w_2}-\frac{n-2p+1+\lambda}{r^2}P_{\w_2}=0.$ Therefore, by \thref{thm:vplaplacien}, there exist $k,p\in\mathbb{N}$ such that $\lambda=\boldsymbol{\lambda_{k,p-1}^{\Ex}}=\boldsymbol{\lambda_{k,p-2}^{\Coex}}$ i.e., $\boldsymbol{\lambda_{k,p-1}^{\Ex}}=(k+p-1)(k+n-p+1)$. By \lemref{lem:PetQpolynomes}, we get $P_{\w_2}(r)=r^{k+p}.$ This contradicts \eqref{eq:condition1}, and therefore, $\xi_{\w_2}\neq 0$.
\end{proof}
In the final step of the proof of \thref{thm:spectreNBS}, we consider the following Proposition: 
\begin{prop}\label{prop:propspectre}
Let $\w\in\Omega^p(\mathbb{B}^n)$ be such that $\w(x)=-2\w_2(x)+(\lvert x\rvert^2-1)\w_1(x)$ where $\omega_1$ and $\omega_2$ are harmonic and given as in \eqref{eq:W=P+Q}. Then $\w$ is an eigenform for the NBS \eqref{eq:NBSF3preuve} if and only if $\xi_{\w_2}$ is an exact or co-exact eigenform of the Laplacian on $\mathbb{S}^{n-1}$. The associated eigenvalues are given by
\begin{equation*}
    \ell^{\Coex}_{k,p} = (k+p+1)^2(n+2(k+1))
\end{equation*}
if $\xi_{\w_2}$ is a co-exact $p$-form associated to $\boldsymbol{\lambda^{\Coex}_{k,p}} = (k + p + 1)(n - p + k-1)$, and
\begin{equation*}
    \ell^{\Ex}_{k,p} = \frac{(2k+n+2)^2(k+p)^2(2k+n+4)}{4k^2+4kn+4k+n^2+4n-4p}
\end{equation*}
if $\xi_{\w_2}$ is an exact $p$-form associated to $\boldsymbol{\lambda^{\Ex}_{k,p}} = (k + p)(n - p + k )$. The respective multiplicities are:
\[\dim(\ker(\bar{\Delta}_p - \boldsymbol{\lambda^{\Coex}_{k,p}}))\qquad
\text{and}
    \qquad \dim(\ker(\bar{\Delta}_p - \boldsymbol{\lambda^{\Ex}_{k,p}})).\]
\end{prop}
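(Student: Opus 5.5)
The plan is to prove both directions at once. I will turn the boundary conditions of \propref{prop:conditions} into scalar equations, separately for a co-exact and for an exact spherical eigenform $\xi_{\w_2}$. Since $p\neq 0$ and $1\le p\le n-2$, \thref{thm:vplaplacien} and \lemref{lm:spectredistinct} give a splitting of $\Omega^p(\mathbb{S}^{n-1})$ into exact and co-exact eigenspaces, with no harmonic part. A general $\xi_{\w_2}$ is a sum of such pieces, and the conditions of \propref{prop:conditions} are linear in the spherical data. So it suffices to treat one eigenspace at a time, and to note that different eigenspaces cannot mix in an eigenform.

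\emph{Reduction.} By \lemref{lem:xinonul}, $\xi_{\w_2}\neq0$. The second condition of \propref{prop:conditions}, $Q'_{\w_2}(1)\xi_{\w_2}=Q_{\w_1}(1)\xi_{\w_1}$, forces $\xi_{\w_1}$ to be proportional to $\xi_{\w_2}$; if $Q_{\w_1}(1)=0$ one argues instead via the fourth condition. Hence $\xi_{\w_1}$ lies in the same eigenspace. By \lemref{lem:PetQpolynomes} the radial profiles are monomials $r^{k+p+1}$ or $r^{k+p}$. Applying $\bar d$ to $(E_{\tann})$ and $\bar\delta$ to $(E_{\radd})$ ties $\eta_{\w_i}$ to $\bar\delta\xi_{\w_i}$.

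\emph{Co-exact case.} Let $\bar\delta\xi_{\w_2}=0$ with $\bar\Delta\xi_{\w_2}=\boldsymbol{\lambda^{\Coex}_{k,p}}\xi_{\w_2}=\boldsymbol{\lambda^{\Ex}_{k,p+1}}\xi_{\w_2}$. Then $(E_{\radd})$ allows $\eta_{\w_i}=0$, and \lemref{lem:PetQpolynomes} gives
\[
\w_2=r^{k+p+1}\xi,\qquad \w_1=b\,r^{k+p+1}\xi .
\]
The first and third conditions are then automatic. The second condition gives $b=k+p+1$. The fourth condition reads
\[
b\bigl[(4-4p+2n)(k+p+1)+4(k+p+1)(k+p)\bigr]=2\ell,
\]
that is, $\ell=(k+p+1)^2(n+2k+2)$, which is the value $\ell^{\Coex}_{k,p}$ in the statement. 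Conversely, these formulas define an eigenform for every co-exact eigenform $\xi$.

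\emph{Exact case.} Let $\xi_{\w_2}=\bar d\beta$ with eigenvalue $\boldsymbol{\lambda^{\Ex}_{k,p}}$. Now $\bar d\xi=0$, and $(E_{\tann})$ can only hold with $\eta_{\w_i}=c_i\bar\delta\xi$ and $P_{\w_i}\neq0$, so each $\w_i$ has two coupled components. I expect the harmonic $p$-forms with this angular type to form a two-parameter family. One family is $d\bigl(r^{k+p}\ldots\bigr)$-type, with radial degrees $(Q,P)=(r^{k+p},r^{k+p-1})$. The other comes from the second allowed homogeneity in the Almansi/Kelvin construction, with degrees $(r^{k+p+2},r^{k+p+1})$. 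I would use $(E_{\tann})$ and $(E_{\radd})$ to fix the ratio of the $P$ and $Q$ coefficients in each family.

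This yields four unknown coefficients (two for $\w_2$, two for $\w_1$) together with $\ell$. The four conditions of \propref{prop:conditions} become a homogeneous linear system in them: $P_{\w_2}(1)=0$, since $\eta_{\w_2}=0$ is excluded here; the second condition; the Neumann-type condition on $P_{\w_1}$; and the $\ell$-condition. This is the system \eqref{eq:systemenonmatrice}. Imposing a vanishing determinant should give
\[
\ell=\frac{(2k+n+2)^2(k+p)^2(2k+n+4)}{4k^2+4kn+4k+n^2+4n-4p}.
\]
This step is the main obstacle. The difficulty is in identifying the right two-parameter family of harmonic forms, which the paper's single-monomial lemma undercounts, and in carrying out the elimination without sign errors. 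I would first check the result against $p=0$ or $p=n-1$ (\rkref{rk:condition+}) to calibrate.

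\emph{Multiplicities.} In each case, the map $\xi\mapsto\w$ from the relevant eigenspace of $\bar\Delta_p$ to $\ker(\mathrm{NBS}-\ell)$ is linear and injective, because $\iota^*\w=-2\iota^*\w_2$ is a nonzero multiple of $\xi$. Conversely, every eigenform is obtained this way, by the reduction step together with completeness of the spherical eigenforms in $L^2$. This gives the stated dimensions. Equal $\ell$-values coming from different $k$ or from different types would only enlarge a kernel; the statement's multiplicities are per label $(k,\text{type})$, and \lemref{lm:spectredistinct} separates the exact and co-exact eigenspaces.
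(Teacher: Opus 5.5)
\paragraph{Verdict.} Your route is the paper's own. You split according to whether $\xi_{\omega_2}$ is co-exact or exact. In the co-exact case you use monomial profiles and the second and fourth conditions of \propref{prop:conditions}. In the exact case you combine the two admissible exponents $k+p$ and $k+p+2$ under the four boundary conditions. Your co-exact computation of $\ell^{\Coex}_{k,p}$ is correct.

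\paragraph{Main gap: the exact case.} This is half of the statement, and it is not carried out. You only \emph{expect} the two-parameter family, and you assert that a determinant \emph{should} produce $\ell^{\Ex}_{k,p}$. (Also, \eqref{eq:systemenonmatrice} in the paper is the harmonicity system, not the boundary system.)

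It does close along your lines. Put $\lambda=\boldsymbol{\lambda^{\Ex}_{k,p}}$ and $\eta_0=\lambda^{-1}\bar\delta\xi$, so that $\bar d\eta_0=\xi$ and $\bar\delta\eta_0=0$. For $P\,dr\wedge\eta_0+Q\,\xi$, the equations $(E_{\tann})$ and $(E_{\radd})$ read
\[
Q''+\tfrac{n-2p-1}{r}Q'-\tfrac{\lambda}{r^2}Q+\tfrac{2}{r}P=0,\qquad
P''+\tfrac{n-2p+1}{r}P'-\tfrac{\lambda+n-2p+1}{r^2}P+\tfrac{2\lambda}{r^3}Q=0 .
\]
Take $(Q,P)=(r^\alpha,c\,r^{\alpha-1})$ and set $u=\alpha(\alpha+n-2p-2)$.
\begin{itemize}
\item The first equation gives $c=(\lambda-u)/2$.
\item The second then becomes $(\lambda-u)^2+2(n-2p)(\lambda-u)-4\lambda=0$, so $\lambda-u\in\{2(k+p),-2(k+n-p)\}$.
\item This yields four distinct exponents $k+p$, $k+p+2$, $p-k-n$, $p-k-n+2$. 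The last two give solutions singular at $0$.
\end{itemize}
Hence the admissible solutions are exactly
\[
Q=Ar^{k+p}+Br^{k+p+2},\qquad P=(k+p)Ar^{k+p-1}-(k+n-p)Br^{k+p+1}.
\]
Let $N=n+2k$ and let $(A_i,B_i)$ be the coefficients of $\omega_i$. The four conditions become
\begin{align*}
(k+p)A_2&=(k+n-p)B_2, & (k+p)A_2+(k+p+2)B_2&=A_1+B_1,\\
N(k+p)A_1&=(N+4)(k+n-p)B_1, & \ell\,(A_2+B_2)&=N(k+p)A_1+(N+4)(k+p+2)B_1.
\end{align*}
So $(A_2,B_2)=t\,(k+n-p,\,k+p)$ and $(A_1,B_1)=s\,\bigl((N+4)(k+n-p),\,N(k+p)\bigr)$. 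The second condition gives $s\,\bigl(N^2+4(k+n-p)\bigr)=t\,(k+p)(N+2)$. The fourth gives $\ell N t=sN(k+p)(N+2)(N+4)$. Therefore
\[
\ell=\frac{(k+p)^2(N+2)^2(N+4)}{N^2+4(k+n-p)},
\]
which is $\ell^{\Ex}_{k,p}$.

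\paragraph{Smaller gap: the co-exact case.} You say $(E_{\radd})$ \emph{allows} $\eta_{\omega_i}=0$, but for the ``only if'' direction it must be forced. With $Q_{\omega_i}\propto r^{k+p+1}$, $(E_{\tann})$ gives $P_{\omega_i}\bar d\eta_{\omega_i}=0$. So any surviving $dr\wedge\eta_{\omega_i}$ part has $\eta_{\omega_i}$ closed. By $(E_{\radd})$ it is then a $\bar\Delta$-eigenform, with profile $c\,r^{j+p}$ for some $j\ge0$. Two conditions of \propref{prop:conditions} kill it:
\begin{itemize}
\item in $\omega_2$, the first condition, since $r^{j+p}$ does not vanish at $r=1$;
\item in $\omega_1$, the third condition, which gives $(2n+4j+4)c=0$.
\end{itemize}
This is essentially how the paper disposes of these components.

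\paragraph{Mixing of eigenspaces.} ``Different eigenspaces cannot mix'' is not a consequence of linearity. Within the product ansatz it follows from each $\omega_i$ having a single radial profile. For general eigenforms, mixing can occur whenever two $\ell$-values coincide. That is why your per-label reading of the multiplicities is the right one.
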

\begin{proof}
Let $\w \in \Omega^p(\mathbb{B}^n)$ satisfy the hypotheses of the proposition. We consider the following cases: 
\begin{itemize}
    \item ${\underline{\text{If}\ P_{\w_2}\bar{\delta}\eta_{\w_2}\neq0\text{:}}}$
    
  Based on \lemref{lem:PetQpolynomes}, equation $\bar{\delta}(E_{\radd})$ gives us $P_{\w_2}(r)=r^{k'+p}$. However, the condition \eqref{eq:condition1} implies that either $P_{\w_2}(1)=0$ or $\eta_{\w_2}=0$. Since $P_{\w_2}(1)=1$, it necessarily follows that $\eta_{\w_2}=0$, which is a contradiction.

 \item ${\underline{\text{If}\ P_{\w_2}\bar{\delta}\eta_{\w_2}=0, \ \bar{d}\xi_{\w_2}\neq0\text{:}}}$
 
        Equation $\bar{\delta}(E_{\radd})$ in \eqref{eq:DAWDQW} becomes trivial. In addition, based on\\ \lemref{lem:PetQpolynomes}, equation $\bar{d}(E_{\tann})$ gives $Q_{\w_2}(r)=r^{k+p+1}$. Consequently, $(E_{\tann})$ becomes:\[r^{k+p-1}\bar{\Delta}\xi_{\w_2}=\Big((k+p+1)(k+p)+(n-2p-1)(k+p+1)\Big)r^{k+p-1}\xi_{\w_2}+\frac{2P_{\w_2}}{r}\bar{d}\eta_{\w_2}.\] Therefore, \[\underbrace{\bar{\Delta}\xi_{\w_2}-(k+p+1)(n+k-p-1)\xi_{\w_2}}_{\text{independent of $r$}}=2r^{-k-p}P_{\w_2}\bar{d}\eta_{\w_2}.\] The right-hand side is independent of $r$ if and only if $\bar{d}\eta_{\w_2}=0$ or $P_{\w_2}(r)=r^{k+p}$ up to rescaling $\eta_{\w_2}$, which implies by \eqref{eq:condition1} that $\eta_{\w_2}=0$. Since $P_{\w_2}\bar{d}\eta_{\w_2}=0$, then for $p\geq1$, we have $P_{\w_2}\eta_{\w_2}=0$ and $\xi_{\w_2}$ is an eigenform of the spherical Laplacian, co-closed since $\bar{d}\xi_{\w_2}=0$. By condition \eqref{eq:condition2}, we have \begin{equation}\label{eq:q'xi=qxi}
             \underbrace{Q'_{\w_2}(1)}_{k+p+1}\xi_{\w_2}=Q_{\w_1}(1)\xi_{\w_1}. \end{equation} We can normalize $\xi_{\w_1}$ such that $\xi_{\w_1}=(k+p+1)\xi_{\w_2},$ we get $Q_{\w_1}(1)=1$, thus we write $Q_{\w_1}(r)$ in polynomial form $Q_{\w_1}(r)=r^{k+p+1}$. Replacing in condition \eqref{eq:condition4}, we get \[((4-4p+2n)(k+p+1)+4(k+p)(k+p+1))\xi_{\w_1}-2\ell\frac{\xi_{\w_1}}{k+p+1}=0.\]
     Since $\xi_{\w_1}\neq0$ \lemref{lem:xinonul}, we finally get \begin{equation}\label{eq:valeurPorpreL1}
         \ell^{\Coex}_{k,p}=(k+p+1)^2(n+2(k+1)).
     \end{equation}Note that $\ell^{\Coex}_{k,p}>0$. This expression coincides with the one found in \cite{WangXia} for $p = 0$. The multiplicity of this eigenvalue is equal to the multiplicity of $\xi_{\w_2}$ as a co-exact eigenform of the Laplacian on the sphere. Thus, it corresponds to the multiplicity of the eigenvalue $\boldsymbol{\lambda^{\Coex}_{k,p}}$, which is, by \thref{thm:vplaplacienmulti}, given by $$\dim(\ker(\bar{\Delta}_p-\boldsymbol{\lambda^{\Coex}_{k,p}}))=\frac{(n + k-3)! (n + 2k-2)}{p!  k! (n - p - 4)!  (n + k - p-3)(k + p + 1)}.$$ 
    From condition \eqref{eq:condition3}, we have $\eta_{\w_1}=0$ or $(2n-4p+4)P_{\w_1}(1)+4P'_{\w_1}(1)=0$. When $\eta_{\w_1}\neq0$, since $\bar{\delta}\xi_{\w_2}=0$, we get that $\bar{\delta}\xi_{\w_1}=0$ by \eqref{eq:q'xi=qxi}, and by \eqref{eq:AWBW} and \lemref{lem:PetQpolynomes} that $P_{\w_1}(r)=c r^{k+p}$. Replacing in \eqref{eq:condition3}, we get $(2n+4k+4)c=0$, then $c=0$, therefore, $P_{\w_1}=0$. Therefore, we get eventually \begin{equation}\label{eq:wrtheta}
        \w(r,\theta)=\w_{k,p}^{\Coex}(r,\theta)=r^{k+p+1}\xi_{\w_2}(\theta)(-2+(r^2-1)(k+p+1))
    \end{equation}
where $\xi_{\w_2}$ is a co-exact eigenform of the spherical Laplacian with eigenvalue $\boldsymbol{\lambda^{\Coex}_{k,p}} = (k + p + 1)(n - p + k)$.\\ We let the reader check that conversely, any $\w_{k,p}^{\Coex}$ of the form \eqref{eq:wrtheta} is an eigenform of NBS whose eigenvalue is given by \eqref{eq:valeurPorpreL1}.
    \item ${\underline{\text{If}\ P_{\w_2}\bar{\delta}\eta_{\w_2}=0, \ \bar{d}\xi_{\w_2}=0\text{:}}}$
      \begin{itemize}
        \item ${\underline{\text{If}\ P_{\w_2}\neq0\text{:}}}$
      \begin{itemize}
       \item[$\star$] ${\underline{\text{If}\ \bar{\delta}\eta_{\w_2}=0,\ Q_{\w_2}=0\text{:}}}$

Equation $(E_{\tann})$ \eqref{eq:AWBW} gives us that $P_{\w_2}\bar{d}\eta_{\w_2}=0.$ However, since $P_{\w_2}\neq0,$ (otherwise $\w_2=0$, see \lemref{lm:wnonnul}) then $\bar{d}\eta_{\w_2}=0$. In addition, having $\bar{\delta}\eta_{\w_2}=0$, we get $\eta_{\w_2}=0$ for $p\geq1.$ Thus ${\w_2}=0,$ which yields a contradiction.
       
          \item[$\star$] ${\underline{\text{If}\ \bar{\delta}\eta_{\w_2}=0,\ Q_{\w_2}\neq0\text{:}}}$
          
Applying $\bar{d}$ to equation $(E_{\radd})$ of \eqref{eq:AWBW}, we obtain:
\begin{equation}\label{eq:eqcas4}
    \frac{P_{\w_2}}{r^2}\bar{\Delta}\bar{d}\eta_{\w_2}=(P''_{\w_2}-\frac{n-2p+1}{r^2}P_{\w_2}+\frac{n-2p+1}{r}P'_{\w_2})\bar{d}\eta_{\w_2}+\frac{2Q_{\w_2}}{r^3}\bar{\Delta}\xi_{\w_2}.
\end{equation}
Substituting equation $(E_{\tann})$ of \eqref{eq:AWBW} into \eqref{eq:eqcas4}, we get:
\begin{equation*}
\begin{aligned}
\frac{P_{\w_2}}{r^2}\bar{\Delta}\bar{d}\eta_{\w_2}= &\left(P''_{\w_2}-\frac{n-2p+1}{r^2}P_{\w_2}+\frac{n-2p+1}{r}P'_{\w_2}\right)\bar{d}\eta_{\w_2}\\
&+ \frac{2}{r}\left(Q''_{\w_2}+\frac{n-2p-1}{r}Q'_{\w_2}\right)\xi_{\w_2}+ \frac{4P_{\w_2}}{r^2}\bar{d}\eta_{\w_2}.
\end{aligned}
\end{equation*}
Isolating $\bar{\Delta}\bar{d}\eta_{\w_2}$, we finally obtain: \begin{equation}\label{eq:derniereeqcas4}
\begin{aligned}
\bar{\Delta}\bar{d}\eta_{\w_2}= &\frac{1}{P_{\w_2}}\left(r^2P''_{\w_2}-(n-2p-3)P_{\w_2}+r(n-2p+1)P'_{\w_2}\right)\bar{d}\eta_{\w_2}\\
&+ \frac{2}{P_{\w_2}}\left(rQ''_{\w_2}+(n-2p-1)Q'_{\w_2}\right)\xi_{\w_2}.
\end{aligned}
\end{equation}
We write $(E_{\tann})$ of \eqref{eq:AWBW} and \eqref{eq:derniereeqcas4} in the following form: 
\begin{equation}\label{eq:systemematrice}
\resizebox{\linewidth}{!}{%
$\begin{pmatrix}
\bar{\Delta}\xi_{\w_2}\\[4mm]
\bar{\Delta}\bar{d}\eta_{\w_2}
\end{pmatrix}
=
\begin{pmatrix}
\dfrac{r}{Q_{\w_2}}\!\left(r Q''_{\w_2}+ (n-2p-1)Q'_{\w_2}\right)
&
\dfrac{2r}{Q_{\w_2}} P_{\w_2}
\\[1.5em]
\dfrac{2}{P_{\w_2}}\!\left(r Q''_{\w_2}+ (n-2p-1)Q'_{\w_2}\right)
&
\dfrac{1}{P_{\w_2}}\!\left(r^2 P''_{\w_2}+ r(n-2p+1)P'_{\w_2}- (n-2p-3)P_{\w_2}\right)
\end{pmatrix}
\cdot
\begin{pmatrix}
\xi_{\w_2}\\[6mm]
\bar{d}\eta_{\w_2}
\end{pmatrix}$%
}
\end{equation}\vspace*{\baselineskip}
\begin{itemize}[label=$\blacktriangleright$]
    \item ${\underline{\text{If}\ \xi_{\w_2}\ \text{and}\ \bar{d}\eta_{\w_2}\ \text{are linearly independent as forms on}\  \mathbb{S}^{n-1}:}}$
    
The equations of the system are of the form: 
\begin{align*}
\bar{\Delta}\xi_{\w_2}= a(r)\xi_{\w_2}+ b(r)\bar{d}\eta_{\w_2}\quad \text{and} \quad \bar{\Delta}\bar{d}\eta_{\w_2}= c(r)\xi_{\w_2}+ d(r)\bar{d}\eta_{\w_2}.
\end{align*} Since $\xi_{\w_2}$ and $\eta_{\w_2}$ are independent of $r$, differentiating the equations of that system yields
\begin{align*}
a'(r)\xi_{\w_2}+ b'(r)\bar{d}\eta_{\w_2}= 0 \quad \text{and} \quad c'(r)\xi_{\w_2}+ d'(r)\bar{d}\eta_{\w_2}= 0.
\end{align*} Since $\xi_{\w_2}$ and $\bar{d}\eta_{\w_2}$ are linearly independent, it follows that $a'(r),$ $b'(r),$ $c'(r)$, and $d'(r)$ are zero. Consequently, $a(r), b(r), c(r)$, and $d(r)$ are constants. Moreover, they are positive since $Q_{\w_2}$ and $P_{\w_2}$ are powers of $r$ according to the equations of the system \eqref{eq:systemematrice} above. However, by condition \eqref{eq:condition1}, we must have either $P_{\w_2}(1)=0$ or $\eta_{\w_2}=0$. The first case is impossible, while the second case yields a contradiction since $\bar{d}\eta_{\w_2}= \xi_{\w_2}\neq 0$.
    \item ${\underline{\text{If}\ \xi_{\w_2}=t\bar{d}\eta_{\w_2}\text{:}}}$

    We can assume $t=1$. The system  \eqref{eq:systemematrice} becomes: \begin{equation}\label{eq:systemenonmatrice}
\begin{cases}
\bar{\Delta}\xi_{\w_2}
=
\dfrac{r}{Q_{\w_2}}\Bigl(r Q''_{\w_2}+ (n-2p-1) Q'_{\w_2}+2 P_{\w_2}\Bigr) \xi_{\w_2}\\[4mm]
\begin{aligned}
\bar{\Delta}\xi_{\w_2}
=
\Biggl( &\dfrac{2}{P_{\w_2}}\Bigl(r Q''_{\w_2}+ (n-2p-1) Q'_{\w_2}\Bigr) \\
&\hspace{-0.8cm} + \dfrac{1}{P_{\w_2}} \Bigl(r^2 P''_{\w_2} + r (n-2p+1) P'_{\w_2}- (n-2p-3) P_{\w_2}\Bigr)\Biggr)\, \xi_{\w_2}.
\end{aligned}
\end{cases}
\end{equation}Therefore, $\xi_{\w_2}=\bar{d}\eta_{\w_2}$ is an exact eigenform of the spherical Laplacian associated with eigenvalue $\boldsymbol{\lambda^{\Ex}_{k,p}} = (k + p)(n - p + k )$ for some $p\in\mathbb{N}$ by \thref{thm:vplaplacien}. We seek solutions of the form $Q_{\w_2}(r)=ar^{\alpha}$ and $P_{\w_2}(r)=br^\beta$. Substituting these into the first equation of \eqref{eq:systemenonmatrice}, we obtain:
$$\bar{\Delta}\xi_{\w_2}=\underbrace{\Big(\alpha(\alpha-1)+\alpha(n-2p-1) +\frac{2br^\beta}{ar^{\alpha-1}}\Big)}_{\boldsymbol{\lambda_{k,p}^{\Ex}}}\xi_{\w_2}.$$
Therefore, we obtain the following relation between $a$ and $b$: \begin{equation}\label{eq:b/a}
  b=a\cdot\frac{\boldsymbol{\lambda_{k,p}^{\Ex}}-\alpha(\alpha-1)-\alpha(n-2p-1)}{2}.
\end{equation}Substituting the expressions for $P_{\w_2}$ and $Q_{\w_2}$ into the second equation of \eqref{eq:systemenonmatrice}, we have: \begin{equation*}
\begin{aligned}
b\cdot\boldsymbol{\lambda_{k,p}^{\Ex}} &= \Biggl(2a\big(\alpha(\alpha-1)+\alpha(n-2p-1)\big) \\
&\hspace{1cm} + \beta b(\beta-1)-b(n-2p-3)+\beta b(n-2p+1)\Biggr).
\end{aligned}
\end{equation*}Using the relation between $a$ and $b$ \eqref{eq:b/a}, we get: 
\begin{align*}
&\frac{
4\alpha\bigl((\alpha-1)+(n-2p-1)\bigr)
}{
\boldsymbol{\lambda_{k,p}^{\Ex}}-\alpha(\alpha-1)-\alpha(n-2p-1)
}
+(\alpha-1)(\alpha-2)\\
&-(n-2p-3)
+(\alpha-1)(n-2p+1)
= \boldsymbol{\lambda_{k,p}^{\Ex}}
\\[1em]
\Longleftrightarrow\quad
&
4\alpha\bigl((\alpha-1)+(n-2p-1)\bigr)
\\
&\quad
+\Bigl(
\boldsymbol{\lambda_{k,p}^{\Ex}}-\alpha(\alpha-1)-\alpha(n-2p-1)
\Bigr)\\
&\quad  \times \Bigl(
(\alpha-1)(\alpha-2)
-(n-2p-3)\\
&\qquad\qquad\qquad\qquad+(\alpha-1)(n-2p+1)
\Bigr)
\\
&=
\boldsymbol{\lambda_{k,p}^{\Ex}}\Bigl(
\boldsymbol{\lambda_{k,p}^{\Ex}}-\alpha(\alpha-1)-\alpha(n-2p-1)
\Bigr)\end{align*}
\begin{align*}
\Longleftrightarrow\quad
&
4\alpha(\alpha+n-2p-2)
\\
&\quad
+\Bigl(
\boldsymbol{\lambda_{k,p}^{\Ex}}-\alpha(\alpha-1)-\alpha(n-2p-1)
\Bigr)\\
&\quad \times
\Bigl(
(\alpha-1)(\alpha-2)
-(n-2p-3)\\
&\quad
+(\alpha-1)(n-2p+1)
-\boldsymbol{\lambda_{k,p}^{\Ex}}
\Bigr)
\\ &\quad =0\\[1em]
\Longleftrightarrow\quad
&
4\alpha(\alpha+n-2p-2)
\\
&\quad
+\Bigl(
\boldsymbol{\lambda_{k,p}^{\Ex}}-\alpha(\alpha+n-2p-2)
\Bigr)\\
&\quad \times
\Bigl(
(\alpha-1)(\alpha-2+n-2p+1)
\\
&\qquad\qquad\qquad\qquad-(n-2p-3)
-\boldsymbol{\lambda_{k,p}^{\Ex}}
\Bigr)
=0
\end{align*}
\begin{align*}
\Longleftrightarrow\quad
&
4\alpha(\alpha+n-2p-2)
\\
&\quad
+\Bigl(
\boldsymbol{\lambda_{k,p}^{\Ex}}-\alpha(\alpha+n-2p-2)
\Bigr)\\
&\quad \times
\Bigl(
(\alpha-1)(\alpha+n-2p-1)
\\
&\qquad\qquad\qquad\qquad-(n-2p-3)
-\boldsymbol{\lambda_{k,p}^{\Ex}}
\Bigr)
=0.
\end{align*}
We obtain the following fourth-degree equation in $\alpha$ to solve:  \[
\begin{aligned}
4\alpha&(\alpha+n-2p-2) + \Bigl(\boldsymbol{\lambda_{k,p}^{\Ex}} -\alpha(\alpha+n-2p-2)\Bigr) \\
&\times \Bigl((\alpha-1)(\alpha+n-2p-1)-(n-2p-3)-\boldsymbol{\lambda_{k,p}^{\Ex}}\Bigr) = 0.
\end{aligned}
\]  
We obtain the following solutions:\begin{align*}
\alpha_1 &= k+p          & &; & \alpha_2 &= k+p+2 \\
\alpha_3 &= -k-n+p        & &; & \alpha_4 &= -k-n+p+2.
\end{align*}We reject the roots $\alpha_3$ and $\alpha_4$ since they are negative. Thus, we have:
\begin{align*}
\text{for } \alpha_1 = k+p: \qquad &
    \left\{
    \hspace{-0.3em}
    \begin{array}{r@{\quad}c@{\quad}l}
        Q_{\w_2}(r) &=& ar^{k+p} \\
        P_{\w_2}(r) &=& br^{k+p-1}
    \end{array}
    \right.
\\[1ex]
\text{and for } \alpha_2 = k+p+2: \qquad &
\left\{
    \hspace{-0.3em}
    \begin{array}{r@{\quad}c@{\quad}l}
        Q_{\w_2}(r) &=& ar^{k+p+2} \\
        P_{\w_2}(r) &=& br^{k+p+1}.
    \end{array}
    \right.
\end{align*}  Condition \eqref{eq:condition1} gives $P_{\w_2}(1)=0$ or $\eta_{\w_2}=0$. Having $P_{\w_2}(1)=0$ is impossible since it implies that $b=0$, which yields by \eqref{eq:b/a} that $a=0$ because $\lambda_{k,p}^{\Ex} - \alpha(\alpha-1) - \alpha(n-2p-1) \neq 0$. Actually, this expression is equal to $k+p$ for $\alpha_1 = k+p$ and to $-(k+n-p)$ for $\alpha_2 = k+p+2$. Having $\eta_{\w_2}=0$ gives us that $\xi_{\w_2}=0$ which is impossible by \lemref{lem:xinonul}. We then take the linear combination \begin{equation}\label{eq:star4}
    P_{\w_2}(r)=r^{k+p-1}-r^{k+p+1}
\end{equation} which satisfies condition \eqref{eq:condition1}. Then, for $b=1$, equation \eqref{eq:b/a} gives the coefficient of $Q_{\w_2}(r)$:
\[a=\frac{2}{\boldsymbol{\lambda_{k,p}^{\Ex}}-\alpha(\alpha-1)-\alpha(n-2p-1)}.\] Therefore, we write $Q_{\w_2}(r)$ as follows: \[
\begin{aligned}
Q_{\w_2}(r) = &\frac{2}{\boldsymbol{\lambda_{k,p}^{\Ex}}-\alpha_1(\alpha_1-1)-\alpha_1(n-2p-1)}\,r^{\alpha_1} \\
&- \frac{2}{\boldsymbol{\lambda_{k,p}^{\Ex}}-\alpha_2(\alpha_2-1)-\alpha_2(n-2p-1)}\,r^{\alpha_2}.
\end{aligned}
\] 
Simplifying the above expression for $Q_{\w_2}(r)$, we finally get: \begin{align}\label{eq:star1}
Q_{\w_2}(r)
&=
\frac{2}{\boldsymbol{\lambda_{k,p}^{\Ex}}-(k+p)(k+p-1)-(k+p)(n-2p-1)}\, r^{k+p}
\nonumber \\ \nonumber
&\quad
-\frac{2}{\boldsymbol{\lambda_{k,p}^{\Ex}}-(k+p+2)(k+p+1)-(k+p+2)(n-2p-1)}\, r^{k+p+2} \nonumber\\
&=\frac{r^{k+p}}{k+p}+\frac{r^{k+p+2}}{k+n-p}.
\end{align}
 Based on condition \eqref{eq:condition2}, for $Q_{\w_1}(1)=1,$ we have: \begin{equation}\label{eq:xiH=xiW}
        \xi_{\w_1}=Q'_{\w_2}(1)\xi_{\w_2}=(1+\frac{k+p+2}{k+n-p})\underbrace{\xi_{\w_2}}_{\bar{d}\eta_{\w_2}}. \end{equation}
        
        Replacing \eqref{eq:xiH=xiW} in $(E_{\tann})$ in \eqref{eq:AWBW}, we obtain:\[
\begin{aligned}
\frac{Q_{\w_1}(r)}{r^2}\boldsymbol{\lambda_{k,p}^{\Ex}}
\Big(1+\frac{k+p+2}{k+n-p}\Big)\bar d\eta_{\w_2}
&=
Q_{\w_1}''(r)
\Big(1+\frac{k+p+2}{k+n-p}\Big)\bar d\eta_{\w_2}\\
&\quad+
\frac{n-2p-1}{r}Q_{\w_1}'(r)\\
&\quad\times
\Big(1+\frac{k+p+2}{k+n-p}\Big)\bar d\eta_{\w_2}\\
&\quad+
\frac{2P_{\w_1}(r)}{r}\bar d\eta_{\w_1}.
\end{aligned}
\] After simplification, we get: \[
\begin{aligned}
\Big(
 -Q''_{\w_1}(r)
 -\frac{n-2p-1}{r}Q'_{\w_1}(r)
  +\frac{Q_{\w_1}(r)}{r^2}\boldsymbol{\lambda_{k,p}^{\Ex}}
\Big)\\
\times\Big(
  1+\frac{k+p+2}{k+n-p}
\Big)
\bar{d}\eta_{\w_2}\\=
\frac{2P_{\w_1}}{r}\bar{d}\eta_{\w_1}.
\end{aligned}
\] Therefore, 
\[\Big(
  Q''_{\w_1}(r)
  +\frac{n-2p-1}{r}Q'_{\w_1}(r)
  -\frac{Q_{\w_1}(r)}{r^2}\boldsymbol{\lambda_{k,p}^{\Ex}}
\Big)\xi_{\w_1}
+\frac{2P_{\w_1}(r)}{r}\bar{d}\eta_{\w_1}=0.\]
Applying $\bar{d}$ and $\bar{d}\bar{\delta}$ to $(E_{\radd})$ in \eqref{eq:AWBW} and \eqref{eq:xiH=xiW}, respectively, we get on the one hand: \begin{equation}\label{eq:dBH}
\begin{aligned}
\bar{d}(E_{\radd}) : \frac{P_{\w_1}(r)}{r^2}\bar{\Delta}\bar{d}\eta_{\w_1} = \,&P''_{\w_1}(r)\bar{d}\eta_{\w_1} - \frac{n-2p+1}{r^2}P_{\w_1}(r)\bar{d}\eta_{\w_1} \\
&+ \frac{n-2p+1}{r}P'_{\w_1}(r)\bar{d}\eta_{\w_1}\\
&+ \frac{2Q_{\w_1}(r)}{r^2}\bar{d}\bar{\delta}\xi_{\w_1}
\end{aligned}
\end{equation}  and on the other hand:\begin{align}\label{eq:ddeltaxi=xi}
   \bar{d}\bar{\delta}\xi_{\w_1}&=\Big(1+\frac{k+p+2}{k+n-p}\Big)\bar{d}\bar{\delta}\xi_{\w_2}\nonumber\\ \nonumber
   &=\Big(1+\frac{k+p+2}{k+n-p}\Big)\Big(\bar{\Delta}\xi_{\w_2}-\underbrace{\bar{\delta}\bar{d}\xi_{\w_2}}_{0}\Big)\\\nonumber
   &=\Big(1+\frac{k+p+2}{k+n-p}\Big)\bar{\Delta}\xi_{\w_2}\\\nonumber
   &=\Big(1+\frac{k+p+2}{k+n-p}\Big)\boldsymbol{\lambda_{k,p}^{\Ex}}\xi_{\w_2}\\
   &=\boldsymbol{\lambda_{k,p}^{\Ex}}\xi_{\w_1}.
\end{align} Substituting \eqref{eq:ddeltaxi=xi} into \eqref{eq:dBH}, we thus obtain: \[
\begin{aligned}
\xi_{\w_1} + \frac{r^3}{2Q_{\w_1}(r)\boldsymbol{\lambda_{k,p}^{\Ex}}} \Biggl( &P''_{\w_1}(r) + \frac{n-2p+1}{r}P'_{\w_1}(r) \\
&- \frac{\boldsymbol{\lambda_{k,p}^{\Ex}}+n-2p+1}{r^2}P_{\w_1}(r) \Biggr) \bar{d}\eta_{\w_1} = 0.
\end{aligned}
\] 
Therefore, we have $\xi_{\w_1}=c\ \bar{d}\eta_{\w_1}$ with $c$ a non-zero constant which we can normalize to be $c=1$. Plugging this into \eqref{eq:AWBW}, we obtain: \[
\begin{cases}
Q''_{\w_1}(r)
+\dfrac{n-2p-1}{r}Q'_{\w_1}(r)
-\dfrac{\boldsymbol{\lambda_{k,p}^{\Ex}}}{r^2}Q_{\w_1}(r)
+\dfrac{2}{r}P_{\w_1}(r)
=0
\\[1em]
\begin{aligned}
P''_{\w_1}(r)
&+\dfrac{n-2p+1}{r}P'_{\w_1}(r) \\
&-\biggl(
\dfrac{\boldsymbol{\lambda_{k,p}^{\Ex}}+n-2p+1}{r^2}
\biggr)P_{\w_1}(r)
+\dfrac{2\boldsymbol{\lambda_{k,p}^{\Ex}}}{r^3}Q_{\w_1}(r)
=0.
\end{aligned}
\end{cases}
\]  This is an Euler system, we look for solutions of the form:
\[
\begin{aligned}
P_{\w_1}(r) &= ar^m & \text{and} \qquad Q_{\w_1}(r) &= br^{m+1}.
\end{aligned}
\] 
The solutions we get are: \begin{align}
    P_{\w_1}(r) &= a
    \left(
    r^{k+p-1}
    -\frac{2k+n}{2k+n+4}\, r^{k+p+1}
    \right) 
    \label{eq:star2} \\[0.5em]  
    Q_{\w_1}(r) &= a
    \left(
    \frac{r^{k+p}}{k+p}
    +\frac{2k+n}{2k+n+4}
    \cdot
    \frac{r^{k+p+2}}{k+n-p}
    \right).
    \label{eq:star3}
\end{align}However, $Q_{\w_1}(1)=1,$ therefore, we have the following expression for $a$:\begin{equation}\label{eq:A/L}
a=\frac{(k+p)(2k+n+4)(k+n-p)}{(2k+n+4)(k+n-p)+(2k+n)(k+p)}.\end{equation} Using \eqref{eq:xiH=xiW} and \eqref{eq:A/L}, we obtain the following based on \eqref{eq:condition4}: \begin{align*}
&\frac{(k+p)(2k+n+4)(2k+n+2)}{(2k+n+4)(k+n-p)+(2k+n)(k+p)}\\
&\times \Bigg[
(4-4p+2n)\left(1+\frac{(2k+n)(k+p+2)}{(2k+n+4)(k+n-p)}\right) \\
&+4\left((k+p-1)+\frac{(2k+n)(k+p+2)(k+p+1)}{(2k+n+4)(k+n-p)}\right)
\Bigg] \\
&-2\ell^{\Ex}\left(\frac{1}{k+p}+\frac{1}{k+n-p}\right)=0.
\end{align*}After simplifying, we finally get:\begin{equation}\label{eq:valeurPorpreL2}
    \ell^{\Ex}_{k,p}=\frac{(2k+n+2)^2(k+p)^2(2k+n+4)}{4k^2+4kn+4k+n^2+4n-4p}.
\end{equation}
The multiplicity of this eigenvalue is that of $\xi_{\w_2}$ as an exact eigenform of the Laplacian on the sphere. Therefore, it corresponds to the multiplicity of the eigenvalue $\boldsymbol{\lambda_{k,p}^{\Ex}}=(k+p)(k+n-p)$ (see, \thref{thm:vplaplacienmulti}), which is: {\small$$ \dim(\ker(\bar{\Delta}_p-\boldsymbol{\lambda^{\Ex}_{k,p}}))=\frac{(n + k-3)! (n + 2k-2)}{(p - 1)!  k!  (n - p-3)!  (n + k - p - 2 )(k + p)}.$$} 
Recall that by \eqref{eq:xiH=xiW}, $\xi_{\w_1}$ is an eigenform of ${(\bar{\Delta}_p)}_{|\Ex}$ associated with $\boldsymbol{\lambda_{k,p}^{\Ex}}=(k+p)(k+n-p)$. Applying $\bar{\delta}$ to $(E_{\radd})$ in \eqref{eq:AWBW}, in the case where $P_{\w_1}\bar{\delta}\eta_{\w_1} \neq 0$, $P_{\w_1}(r)$ is written as a positive power of $r$. This contradicts condition \eqref{eq:condition3}, since if $\eta_{\omega_1} = 0$, we get $P_{\omega_1}\bar{\delta}\eta_{\omega_1} = 0$, in addition the equation for $P_{\omega_1}$ is not satisfied, as we obtain $2n + 4k + 4 \neq 0$. Therefore, we must have $P_{\w_1}\bar{\delta}\eta_{\w_1} = 0$, and since $P_{\w_1} \neq 0$, it follows that $\bar{\delta}\eta_{\w_1} = 0$. Since $\bar{d}\eta_{\w_1}$ is an eigenform and $\bar{\delta}\eta_{\w_1} = 0$, we have $\bar{\Delta}(\bar{d}\eta_{\w_1}) = \boldsymbol{\lambda^{\Ex}_{k,p}}\bar{d}\eta_{\w_1}$, which implies $\bar{d}(\bar{\Delta}\eta_{\w_1} - \boldsymbol{\lambda^{\Ex}_{k,p}}\eta_{\w_1}) = 0$. Thus, since $\bar{\Delta}\eta_{\w_1} - \boldsymbol{\lambda^{\Ex}_{k,p}}\eta_{\w_1}$ is closed, it can be written as $\bar{\Delta}\eta_{\w_1} - \boldsymbol{\lambda^{\Ex}_{k,p}}\eta_{\w_1} = \bar{d}\theta$. Applying $\bar{\delta}$ yields $\bar{d}\theta = 0$, then $\eta_{\w_1}$ is an eigenform for ${(\bar{\Delta}_p)}_{|\Ex}$. Similarly, we can show that $\eta_{\w_2}$ is also an eigenform for ${(\bar{\Delta}_p)}_{|\Ex}$. Since $\xi_{\w_i}=\bar{d}\eta_{\w_i}$ for $i\in\{1,2\}$, we get by \eqref{eq:xiH=xiW} that $\eta_{\w_1}=(1+\frac{k+p+2}{k+n-p})\eta_{\w_2}.$ Eventually, we have \begin{equation}\label{eq:wrtheta1}
\begin{split}
    \w(r,\theta) &= \w_{k,p}^{\Ex}(r,\theta) \\
    &= dr \wedge \eta_{\w_2}(\theta) ( -2P_{\w_2}(r) + (r^2-1)(1+\frac{k+p+2}{k+n-p})P_{\w_1}(r) ) \\
    &\quad + \xi_{\w_2}(\theta) \left( -2Q_{\w_2}(r) + (r^2-1)Q_{\w_1}(r) \right)
\end{split}
\end{equation} where $\xi_{\w_2}$ is an exact eigenform of the spherical Laplacian associated to $\boldsymbol{\lambda^{\Ex}_{k,p}} = (k + p )(n - p + k),$ and $\eta_{\w_2}$ is the unique co-exact eigenform of the Laplacian such that $\bar{d}\eta_{\w_2}=\xi_{\w_2}$, $P_{\w_2}(r)$ is given by \eqref{eq:star4}, $Q_{\w_2}(r)$ by \eqref{eq:star1}, $P_{\w_1}(r)$ by \eqref{eq:star2} and $Q_{\w_1}(r)$ by \eqref{eq:star3}.\\ Conversely, the reader can check that \eqref{eq:wrtheta1} defines an eigenform for NBS \eqref{eq:NBSF3preuve} associated to the eigenform given in \eqref{eq:valeurPorpreL2}. 
   \end{itemize}
\end{itemize}
   \item ${\underline{\text{If}\ P_{\w_2}=0\text{:}}}$
   
   Equations $(E_{\tann})$ and $(E_{\radd})$ in \eqref{eq:AWBW} give respectively, $Q_{\w_2}(r)=r^{k+p+1}$ and $Q_{\w_2}(r)\bar{\delta}\xi_{\w_2}=0.$ Since $Q_{\w_2}(r)\neq0$ for $r\neq0,$ we must have $\bar{\delta}\xi_{\w_2}=0$. Furthermore, because $\bar{d}\xi_{\w_2}=0,$ it follows that $\xi_{\w_2}=0$ for $p\geq1$. Thus, $\w_2=0$, which leads to a contradiction.
\end{itemize}
\end{itemize} This concludes the proof \propref{prop:propspectre}.    
\end{proof}

To finish the proof of \thref{thm:spectreNBS}, we must show the following Proposition:
\begin{prop}\label{prop:spectrentier}
 The eigenvalues given in \eqref{eq:valeurPorpreL1} and \eqref{eq:valeurPorpreL2} together with $\{0\}$ when $H_{\Abs}(M)\neq0$ constitute the entire non-zero spectrum of the NBS operator on the ball.
\end{prop}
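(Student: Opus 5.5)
Our plan is a completeness argument: every NBS eigenform with nonzero eigenvalue is a finite sum of forms of the separated type \eqref{eq:W=P+Q} already classified in \propref{prop:propspectre}. Fix $\ell\neq 0$ and an eigenform $\w$. By the Almansi decomposition we write $\w=-2\w_2+(\lvert x\rvert^2-1)\w_1$ with $\w_1,\w_2$ harmonic on $\mathbb{B}^n$.

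\textbf{Step 1: decompose the harmonic components.} We use the $L^2$ Hodge decomposition of $\Omega^\bullet(\mathbb{S}^{n-1})$ into eigenspaces of $\bar{\Delta}$, split further into exact and co-exact parts; harmonic parts are trivial for $1\le p\le n-2$ by \thref{thm:vplaplacien}. At each radius we expand
\[
\iota_r^*\w_i=\sum_{\lambda}\xi^{\lambda}_{i}(r),\qquad \partial_r\lrcorner\w_i=\sum_{\lambda}\eta^{\lambda}_{i}(r),
\]
where $\xi^\lambda_i(r)$ and $\eta^\lambda_i(r)$ are the components in the $\lambda$-eigenspaces of $\bar\Delta_p$ and $\bar\Delta_{p-1}$.

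\textbf{Step 2: reduce to finite-dimensional blocks.} By \lemref{lm:delta=deltat+deltar}, $\Delta$ commutes with $\bar d$, $\bar\delta$ and $\bar\Delta$ acting on the angular parts. Hence $\Delta$ preserves each block spanned by
\[
\{\text{co-exact }\xi\in V_{\boldsymbol{\lambda^{\Coex}_{k,p}}}\}\quad\text{and}\quad\{\text{exact }\xi=\bar d\eta,\ \eta\ \text{co-exact in } V_{\boldsymbol{\lambda^{\Ex}_{k,p}}}\}.
\]
The isomorphism lemma for $\bar d$ identifies $\eta$ with $\bar d\eta$, so these blocks are indeed closed under $\Delta$. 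The four boundary conditions of \propref{prop:conditions} are linear and also respect this splitting, and $\ell\,\iota^*$ preserves it. Projecting $\w$ onto a single block therefore gives again an NBS eigenform for the same $\ell$.

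The distinctness of exact and co-exact spectra (\lemref{lm:spectredistinct}) ensures the blocks do not mix, except when $n-1=2p$. In that case the two families are still distinguished by $\bar d$-exactness, so the projections remain well defined.

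\textbf{Step 3: identify each block with a classified form.} Within one block, pick a basis of the angular eigenspace. The projection is then a finite sum of forms $P(r)\,dr\wedge\eta+Q(r)\xi$ with fixed $\eta,\xi$, which is exactly the ansatz \eqref{eq:W=P+Q}. \lemref{lm:wnonnul} and \propref{prop:propspectre} apply to each such summand. They force $\ell\in\{\ell^{\Coex}_{k,p},\ell^{\Ex}_{k,p}\}$ for the $k$ indexing that block, unless the projection vanishes. Since $\w\neq0$, some projection is nonzero, so $\ell$ belongs to the listed set.

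The zero eigenvalue is accounted for by \thref{thm:TheoremeGeneralNBSF3PourLaPreuve}: its kernel is $H_{\Abs}$, which is trivial on the ball.

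\textbf{Step 4: multiplicities.} The multiplicity of $\ell$ is the sum of the dimensions of the contributing blocks. One should check that the map $k\mapsto\ell_{k,p}$ is injective within each family; it is, since both expressions are increasing in $k$. Accidental coincidences $\ell^{\Ex}_{k,p}=\ell^{\Coex}_{k',p}$ would add multiplicities; the statement implicitly ignores this.

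\textbf{Main obstacle.} The delicate step is Step 2–3: justifying that the radial coefficient functions in each eigen-block are genuinely of the form $P(r)\eta$ with $\eta$ independent of $r$. This requires the block to be coupled only through the $2\times2$ system in $(\xi,\bar d\eta)$ from \eqref{eq:systemematrice}. It also requires the regularity at $r=0$ from \lemref{lm:P0Q0}, so that only the nonnegative-power Euler solutions survive. Convergence of the expansion is standard by smoothness of $\w$ up to the boundary.
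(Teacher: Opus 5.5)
Your argument is correct in outline, but it takes a different route from the paper.

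\textbf{How the two proofs differ.} The paper never decomposes a general eigenform. It uses three ingredients:
\begin{enumerate}
\item injectivity of $\iota^*$ on $\check{Z}$ (Lemma~\ref{lm:iotainj}, obtained from \eqref{eq:IPP2});
\item the traces $\iota^*\omega=-2\xi_{\omega_2}$ of the explicit eigenforms \eqref{eq:wrtheta}, \eqref{eq:wrtheta1}, which run through a complete system of eigenforms of $\bar\Delta_p$ in $L^2(\mathbb{S}^{n-1})$;
\item the symmetry of \eqref{eq:IPP2}, which gives $(\ell-\ell_i)(\iota^*\omega,\iota^*\omega_i)_{L^2(\partial M)}=0$.
\end{enumerate}
An eigenform with $\ell$ outside the list therefore has trace orthogonal to a complete family, so it vanishes. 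That argument needs only the existence direction of Proposition~\ref{prop:propspectre}, namely that the constructed forms are eigenforms, which can be checked by direct computation. It needs no projection or regularity argument, and it does not depend on the case analysis being exhaustive.

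Your separation-of-variables argument needs more. It uses the classification direction: a separated eigenform must have $\ell$ in the list, which rests on the exhaustiveness of that case analysis and on uniqueness of the regular Euler solutions. It also needs that block projections are smooth NBS eigenforms. The cleanest justification is that your blocks are $SO(n)$-isotypic components of $\Omega^p(\mathbb{B}^n)$. The projections are then group averages $\int_{SO(n)}\overline{\chi(g)}\,g^*\omega\,dg$, which are smooth up to the origin and commute with $\Delta$, $d$, $\nu\lrcorner$ and $\iota^*$. In exchange, you get the splitting of each eigenspace into blocks directly, so multiplicities come out without the density argument.

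\textbf{Two details to fix.}
\begin{itemize}
\item Your list of blocks omits the radial-only ones: $\xi=0$ with $\eta$ exact, and, for $p=1$, $\eta$ constant (harmonic $0$-forms on $\mathbb{S}^{n-1}$ are not trivial). These blocks contain no eigenforms because their trace vanishes; use Lemma~\ref{lm:iotainj} or Lemma~\ref{lem:xinonul}.
\item To apply Proposition~\ref{prop:propspectre} summand by summand, choose the basis compatibly, with $\xi_j=\bar d\eta_j$ in the exact block. Then observe that the reduced radial system and the boundary conditions are diagonal in $j$, so each summand is itself an NBS eigenform for the same $\ell$.
\end{itemize}
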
 Before the proof, let us first define the space $\check{Z}$: \begin{align*}
    \check{Z} := \bigl\{ \w \in \Omega^p(M) \bigm|\ &\Delta^2\w = 0 \text{ on } M, \ \nu\lrcorner\w=0,\ \nu\lrcorner d\w=0,\ \nu\lrcorner \Delta\w=0\ \text{on}\ \partial M \\
    &\qquad\qquad\qquad\qquad\qquad\qquad\qquad\qquad\qquad\text{and } \w \perp_{L^2(\partial M)} H_{\Abs}(M) \bigr\}.
\end{align*}

\begin{proof}[Proof of \propref{prop:spectrentier}.]
It follows from the proof of \thref{thm:TheoremeGeneralNBSF3PourLaPreuve} (see, \cite{Article1}, Section 3) that any eigenform of NBS is either in $H_{\Abs}(\mathbb{B}^n)$ (if it is in the kernel of NBS) or in $\check{Z}$.
\begin{lemma}\label{lm:iotainj}
The map \begin{align*}
\iota^* : V &\longrightarrow H^{\frac{3}{2}}(\partial M) \\
\omega &\longmapsto \iota^*\omega
\end{align*} is injective.
\end{lemma}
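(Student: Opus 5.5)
The plan is to show that the kernel of $\iota^*$ restricted to $V$ is trivial, where I take $V$ to be the space $\check{Z}$ just defined (or any subspace of it on which the NBS boundary conditions other than the eigenvalue one hold). Let $\w\in V$ with $\iota^*\w=0$. Together with the condition $\nu\lrcorner\w=0$ built into $\check{Z}$, this gives $\w_{|\partial M}=0$ entirely. The strategy is the same as in the first half of the proof of \lemref{lm:wnonnul}:
\begin{itemize}
\item first kill $\Delta\w$ by an energy identity;
\item then conclude from a unique continuation argument for harmonic forms vanishing on the boundary.
\end{itemize}

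\textbf{Step 1: $\Delta\w=0$.} By elliptic regularity for the boundary problem defining $\check{Z}$, $\w$ is smooth up to the boundary (or at least regular enough), so the integration by parts formula \eqref{eq:IPP2} may be applied with $\w'=\w$. The left-hand side vanishes since $\Delta^2\w=0$. Each boundary term contains a factor that vanishes on $\partial M$:
\begin{itemize}
\item $\langle \nu\lrcorner d\Delta\w,\iota^*\w\rangle$ vanishes because $\iota^*\w=0$;
\item $\langle\iota^*\Delta\w,\nu\lrcorner d\w\rangle$ vanishes because $\nu\lrcorner d\w=0$;
\item $\langle\nu\lrcorner\Delta\w,\iota^*\delta\w\rangle$ vanishes because $\nu\lrcorner\Delta\w=0$;
\item $\langle\iota^*\delta\Delta\w,\nu\lrcorner\w\rangle$ vanishes because $\nu\lrcorner\w=0$.
\end{itemize}
Hence $\int_M|\Delta\w|^2\,d\mu_g=0$, that is, $\Delta\w=0$ on $M$.

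\textbf{Step 2: $\w=0$.} Now $\w$ is a harmonic $p$-form with $\w_{|\partial M}=0$ (both tangential and normal parts) and, in addition, $\nu\lrcorner d\w=0$ on $\partial M$. The cleanest conclusion is to invoke the unique continuation result of \cite{ColetteAnne}, exactly as in \lemref{lm:wnonnul}, which gives $\w=0$ on $M$.

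If one prefers a self-contained argument, the route would be as follows. Vanishing of $\w$ on $\partial M$ gives vanishing of all tangential derivatives there. From $\nu\lrcorner d\w=0$ one recovers the tangential part of $\nabla_\nu\w$. The normal part of $\nabla_\nu\w$ then comes from $\delta\w$; on the boundary $\delta\w$ is controlled through $\Delta\w=0$ and the identity $\int_M(|d\w|^2+|\delta\w|^2)=$ boundary terms, all of which vanish here. This yields $d\w=\delta\w=0$, hence full Cauchy data zero. Unique continuation for the elliptic system $\Delta\w=0$ then forces $\w\equiv0$.

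The orthogonality condition to $H_{\Abs}(M)$ plays no role in the injectivity itself; it is only part of the definition of $\check{Z}$.

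The main obstacle is Step 2, namely making sure the boundary data suffice for unique continuation: one needs both $\w$ and $\nabla_\nu\w$ to vanish on $\partial M$. I expect to settle it by citing \cite{ColetteAnne}, as the paper already does. The only other point needing care is justifying \eqref{eq:IPP2} for elements of $V$, which the regularity theory for NBS from \cite{Article1} provides.
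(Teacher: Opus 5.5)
Your proposal is correct and matches the paper's proof. In both, $\iota^*\omega=0$ together with $\nu\lrcorner\omega=0$ gives $\omega_{|\partial M}=0$; applying \eqref{eq:IPP2} with $\omega'=\omega$ makes every boundary term vanish, so $\Delta\omega=0$; and \cite{ColetteAnne} then yields $\omega=0$ (your self-contained alternative for this last step, via Green's formula, vanishing Cauchy data and unique continuation, is also sound).
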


\begin{proof}
  Let $\omega \in \check{Z}$ such that $\iota^* \omega = 0$. We have then $\omega_{|\partial M} = 0$. Moreover, by integration by parts \eqref{eq:IPP2}, we have that $\Delta \omega = 0$. Therefore by \cite{ColetteAnne} $\omega = 0$.
\end{proof}
Note that for any eigenform of the NBS given by $\eqref{eq:wrtheta}$ or $\eqref{eq:wrtheta1}$, we have $\iota^* \omega = -2 \xi_{\omega_2}$. Ranging among all possible such $\xi_{\w_2}$'s, we get that $\iota^*\w$ can be any eigenform of the Laplacian on $\mathbb{S}^{n-1}$. Recall that these eigenforms generate a dense vector space in $L^2(\mathbb{S}^{n-1})$. Therefore, if $\omega$ is an $\ell$-eigenform for NBS, its trace can be decomposed as:
\[
\iota^*\omega = \sum_{i=1}^\infty a_i \xi_i, \quad a_i \in \mathbb{R}, \quad a_i = (\iota^*\omega, \xi_i)_{L^2(\partial M)}.
\]

We know that for all $i\geq1$, $\xi_i = \iota^*\omega_i$, where $\omega_i$ is an $\ell_i$-eigenform of NBS.

Assume by contradiction that $\ell \neq \ell_i$ for all $i$. Then, using an integration by parts \eqref{eq:IPP2}, we have:
\[
 (\Delta\omega, \Delta\omega_i)_{L^2(M)} = \ell(\iota^*\omega, \iota^*\omega_i)_{L^2(\partial M)}=\ell a_i.
\] Moreover, it follows from the spectral study of NBS (see, \cite{Article1}, Section 3) that if $\ell\neq\ell_i,$ then $(\Delta\omega, \Delta\omega_i)_{L^2(M)}=0.$
This implies that $a_i = 0$, hence $\iota^*\omega = 0$, which gives $\omega = 0$ since $\iota^*$ is injective by \lemref{lm:iotainj}. Therefore, there cannot be any other eigenform.
\end{proof}
This completes the proof of \thref{thm:spectreNBS}.
\end{proof}

\begin{proof}[Proof of \thref{thm:spectreNBScasparticulier}.]
Assume $n \ge 2$ and $p = 0$ i.e., we consider the scalar NBS on $\mathbb{B}^n$. The spectrum of the Laplacian on scalar functions on $\mathbb{S}^{n-1}$ is given in \thref{thm:vplaplacienFCT}.

\begin{itemize}
    \item For $k = 0$, $\boldsymbol{\lambda_{0,0}} = 0$ and the constant map $f(x) \equiv 1$ is obviously a solution to NBS with eigenvalue $\ell_{0,0} = 0$. Note that $\iota^* f = f_{|\mathbb{S}^{n-1}} \equiv 1$.
    \item For $k \ge 1$, let $\xi_k$ be an eigenfunction of the Laplacian on $\mathbb{S}^{n-1}$ associated to $\boldsymbol{\lambda_k}$. Then the proof of \eqref{eq:wrtheta} shows that
    \[
    f_k(r, \theta) = r^{k+1} \xi_k(\theta) (-2 + (r^2-1)(k+1)) 
    \]
    is an eigenfunction for NBS associated to the eigenvalue $\ell_{k,0} = k^2 (n+2k)$. 
\end{itemize}

Since for each eigenfunction $\xi_k$ of the Laplacian on $\mathbb{S}^{n-1}$ we have constructed an associated eigenfunction $f_k$ for NBS such that ${(f_k)}_{|\mathbb{S}^{n-1}} = \xi_k$, repeating the density argument of \propref{prop:spectrentier} yields \thref{thm:spectreNBScasparticulier} for $p = 0$.\medskip 

Assume now $n \ge 2$ and $p = n-1$. The spectrum of $\bar{\Delta}$ on $\mathbb{S}^{n-1}$ is given by \thref{thm:vplaplacienFCT} and associated to:
\begin{itemize}
    \item exact $(n-1)$-forms for $k \ge 1$
    \item the volume form on $\mathbb{S}^{n-1}$ for $k = 0$.
\end{itemize}

For $k \ge 1$, for each Laplace eigenform $\xi_k \in \Omega^{n-1}(\mathbb{S}^{n-1})$, formula \eqref{eq:wrtheta1} provides an associated eigenform $\omega_k$ for the NBS such that $\iota^* \omega_k = \xi_k$ associated to the eigenvalue $\ell^{\Ex}_{k-1,n-1}$.\\ For $k = 0$, the reader can check using \propref{prop:conditions} and \rkref{rk:condition+} that taking $\xi_{\omega_2} = \xi_{\omega_1} = \mathrm{dVol}_{\mathbb{S}^{n-1}}$, $Q_{\omega_2} = r^n$, $Q_{\omega_1} = n r^n$, $P_{\omega_1} = P_{\omega_2} = 0$, $\eta_{\omega_1} = 0$ and $\eta_{\omega_2} = 0$, the formula \eqref{eq:w=W+H} and \eqref{eq:W=P+Q} provides an eigenform $\omega_0$ for NBS such that $\iota^* \omega_0 = \mathrm{dVol}_{\mathbb{S}^{n-1}}$, associated to the eigenvalue $\ell_0 = n^3(n+2)$. Again, since for each eigenform $\xi_k$ of $\bar{\Delta}$ on $\mathbb{S}^{n-1}$ we have an associated eigenform $\omega_k$ for NBS such that $\iota^* \omega_k = \xi_k$, the density argument of \propref{prop:spectrentier} shows that we have all the spectrum of NBS on $p$-forms.
\end{proof}
\section{Spectra on the ball and inequality testing}\label{sec:Test}
\subsection{Spectra of the Steklov and DBS operators on the Euclidean ball} As soon as we have a boundary, the spectrum is not completely known on the Euclidean ball. We recall in this section the Steklov and DBS spectra on differential forms. For Steklov eigenvalues on the ball, we follow the presentation of \cite{RaulotSavo3}. Let $P_{k,p}$ be the vector space of polynomial $p$-forms of homogeneous degree $k \ge 0$ on $\mathbb{R}^{n}$ and set:
\begin{equation*}
\begin{cases}
    H_{k,p} = \{\omega \in P_{k,p} : \Delta\omega = 0, \delta\omega = 0\}\\
    H'_{k,p} = \{\omega \in H_{k,p} : d\omega = 0\} \\
    H''_{k,p} = \{\omega \in H_{k,p} : r \frac{\partial}{\partial r} \lrcorner\omega = 0\}.
\end{cases}
\end{equation*} It turns out that $H_{k,p} = H'_{k,p} \oplus H''_{k,p}$ and $d : H''_{k,p} \rightarrow H'_{k-1,p+1}$ is a linear isomorphism for all $k \ge 1$. We set\begin{equation*}
    M_{k,p} = \dim(H''_{k,p})
\end{equation*}This number can be computed by representation theory, see \cite[Theorem 6.8]{AkiraYoshiharu}.

\begin{thm}\cite[Theorem 1]{RaulotSavo3}
The spectrum of the Dirichlet-to-Neumann operator on functions on the unit ball in $\mathbb{R}^n$ consists of the eigenvalues $\sigma^{\Coex}_{k,0} = k$, where $k = 0,1,2,\dots$, each with multiplicity $M_{k,0} = \dim(H_{k,0})$ and associated eigenspace $\iota^*(H_{k,0})$.
\end{thm}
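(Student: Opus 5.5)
The plan is to write down the Steklov eigenfunctions explicitly from harmonic homogeneous polynomials, and then use density of spherical harmonics to show there are no others. For $p=0$ the boundary condition $\nu\lrcorner df=-\sigma\,\iota^*f$ in \eqref{eq:steklovFormes}, with $\nu=-\partial_r$ the inward normal, reads $\partial_r f=\sigma f$ on $\mathbb{S}^{n-1}$. The interior equation is $\Delta f=0$, and the tangential condition $\nu\lrcorner f=0$ is empty for functions.

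\emph{Step 1 (construction).} Take $h\in H_{k,0}$, a harmonic homogeneous polynomial of degree $k$. Euler's identity gives $r\,\partial_r h=k\,h$, so $\partial_r h=k\,h$ on $r=1$. Hence $h$ solves the Steklov problem with $\sigma=k$, and $\iota^*h$ is a DtN eigenfunction for the eigenvalue $k$.

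Equivalently, in the spirit of \lemref{lem:PetQpolynomes}, one can separate variables $f=Q(r)\xi(\theta)$. Here $\bar\Delta\xi=\boldsymbol{\lambda_{k,0}}\xi=k(k+n-2)\xi$ by \thref{thm:vplaplacienFCT}. The Euler equation
\[
Q''+\frac{n-1}{r}Q'-\frac{k(k+n-2)}{r^2}Q=0
\]
has roots $k$ and $-(k+n-2)$. Regularity at $0$ forces $Q=r^k$, and therefore $\sigma=Q'(1)/Q(1)=k$.

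\emph{Step 2 (injectivity of the trace on $H_{k,0}$).} If $h\in H_{k,0}$ vanishes on $\mathbb{S}^{n-1}$, then $h$ is a harmonic function on the ball with zero boundary values. By the maximum principle $h\equiv0$ (alternatively, homogeneity gives $h(x)=|x|^k h(x/|x|)=0$ directly). Hence $\iota^*$ is injective on $H_{k,0}$, and $\dim\iota^*(H_{k,0})=\dim H_{k,0}$.

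\emph{Step 3 (completeness and multiplicities).} This is the main point. The traces $\iota^*(H_{k,0})$ are exactly the eigenspaces of $\bar\Delta$ on $\mathbb{S}^{n-1}$ for $k(k+n-2)$. Indeed, by Step 1 each $\iota^*h$ is such an eigenfunction. Conversely, an eigenfunction $\xi$ with eigenvalue $k(k+n-2)$ extends to $r^k\xi(\theta)$, which is harmonic (by the separation computation above) and is a homogeneous polynomial. Since these eigenspaces span a dense subspace of $L^2(\mathbb{S}^{n-1})$, and the DtN operator is self-adjoint on $L^2(\mathbb{S}^{n-1})$, the argument runs as follows. Let $u$ be any DtN eigenfunction with eigenvalue $\sigma$. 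For each $k$ with $k\neq\sigma$, self-adjointness gives $u\perp\iota^*(H_{k,0})$. Density then forces $u\in\iota^*(H_{\sigma,0})$, so $\sigma$ is a nonnegative integer. Since the values $k$ are pairwise distinct, the eigenspace for $\sigma=k$ is exactly $\iota^*(H_{k,0})$, and its multiplicity is $M_{k,0}=\dim H_{k,0}$ by Step 2.

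The only delicate point is the identification of $\iota^*(H_{k,0})$ with the full $\bar\Delta$-eigenspace, which is needed for the density argument. I would handle it via the explicit extension $r^k\xi$ and the classical fact that such an extension is polynomial, as in \cite{Chavel}.
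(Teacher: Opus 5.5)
Your proof is correct. The paper gives no proof of this statement: it is quoted from \cite[Theorem 1]{RaulotSavo3}, so there is no in-paper argument to compare with line by line. What you wrote is the standard classical proof, and it follows the same template the paper uses for NBS.
\begin{itemize}
\item You separate variables and reduce to an Euler equation with a single admissible root, compare \lemref{lem:PetQpolynomes}. This produces eigenfunctions whose traces exhaust the spherical harmonics.
\item You then run an orthogonality-plus-density argument, as in \propref{prop:spectrentier}.
\end{itemize}
In the scalar case this is cleaner. The DtN operator is visibly symmetric, since $\int_{\mathbb{S}^{n-1}}(\partial_r f)\,g=\int_{\mathbb{B}^n}\langle\nabla f,\nabla g\rangle$ for harmonic $f,g$. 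The spaces $\iota^*(H_{k,0})$ are mutually orthogonal, so density gives a Hilbert-sum decomposition of $L^2(\mathbb{S}^{n-1})$ on whose $k$-th summand the DtN operator acts as multiplication by $k$. That yields the eigenvalues, the eigenspaces and the multiplicities at once, with injectivity of $\iota^*$ on $H_{k,0}$ from your Step 2.

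You can actually bypass the step you flag as delicate. Density of $\bigoplus_k\iota^*(H_{k,0})$ in $L^2(\mathbb{S}^{n-1})$ follows from two facts:
\begin{itemize}
\item Stone--Weierstrass, which makes polynomials dense in $C(\mathbb{S}^{n-1})$;
\item the decomposition $P_{k,0}=H_{k,0}\oplus|x|^2P_{k-2,0}$, together with $|x|^2=1$ on the sphere.
\end{itemize}
With this, you never need to identify $\iota^*(H_{k,0})$ with the full $\bar\Delta$-eigenspace, nor invoke \thref{thm:vplaplacienFCT}.

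If you keep your route, the polynomiality of $r^k\xi$ is quick. The function is continuous at $0$, hence harmonic on all of $\mathbb{R}^n$ by removability, hence smooth. A $C^k$ function $f$ that is homogeneous of degree $k$ satisfies $f(x)=\frac{1}{k!}D^kf(0)(x,\dots,x)$, as one sees by Taylor-expanding $t\mapsto f(tx)=t^kf(x)$ at $t=0$.

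A minor wording point: Step 1 shows $\iota^*h$ is a Steklov eigenfunction. The fact that it is a $\bar\Delta$-eigenfunction with eigenvalue $k(k+n-2)$ comes from the polar form of the Laplacian applied to $r^k\xi$, not from Step 1 itself.
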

\begin{thm}\cite[Theorem 2]{RaulotSavo3}\label{thm:spectreSteklov}
    Let $1 \le p \le n - 1$. The spectrum of the Dirichlet-to-Neumann operator on $p$-forms on the unit ball of $\mathbb{R}^n$ is given by the following two families of eigenvalues $\{\sigma^{\Ex}_{k,p}\}, \{\sigma^{\Coex}_{k,p}\}$ indexed by a positive integer $k = 1,2,\dots$:
\begin{equation*}
    \left\{
    \hspace{-0.3em}
    \begin{array}{r@{\quad}c@{\quad}l@{\quad}l}
        \sigma^{\Ex}_{k,p} &=& (k + p - 1) \dfrac{n + 2k }{n + 2k - 2} & \text{with multiplicity } M_{k,p-1} \\[0.5em]
        \sigma^{\Coex}_{k,p} &=& k + p & \text{with multiplicity } M_{k,p}
    \end{array}
    \right.
\end{equation*}
The eigenspace associated to $\sigma^{\Ex}_{k,p}$ is $\iota^*(H'_{k-1,p})$ and consists of exact forms. The eigenspace associated to $\sigma^{\Coex}_{k,p}$ is $\iota^*(H''_{k,p})$ and consists of co-exact forms. \end{thm}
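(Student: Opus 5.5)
The plan is to build, for each of the two families, explicit harmonic extensions out of homogeneous polynomial forms. We then read off the Steklov quotient from homogeneity, and finally use the same density argument as in the proof of \propref{prop:spectrentier} to show nothing else occurs.

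\emph{Co-exact family.} Take $\omega\in H''_{k,p}$ and let $X=r\partial_r$. Then $\Delta\omega=0$ on $\mathbb{B}^n$, and $X\lrcorner\omega=0$ gives $\nu\lrcorner\omega=0$ on $\mathbb{S}^{n-1}$. The coefficients of $\omega$ are homogeneous of degree $k$ and $\mathcal{L}_X dx_I=p\,dx_I$, so $\mathcal{L}_X\omega=(k+p)\omega$. Cartan's formula then gives
\[
X\lrcorner d\omega=\mathcal{L}_X\omega-d(X\lrcorner\omega)=(k+p)\omega .
\]
At $r=1$ we have $\nu=-\partial_r$, hence $\nu\lrcorner d\omega=-(k+p)\iota^*\omega$, i.e.\ $\sigma^{\Coex}_{k,p}=k+p$. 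Since $\delta\omega=0$, the trace $\iota^*\omega$ is co-exact on the sphere. Injectivity of $\iota^*$ on $H''_{k,p}$ holds because a homogeneous polynomial vanishing on $\mathbb{S}^{n-1}$ vanishes identically. This gives multiplicity at least $M_{k,p}$.

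\emph{Exact family.} Take $\varphi\in H'_{k-1,p}$, so $\varphi$ is closed, co-closed and harmonic of degree $k-1$. Then $\mathcal{L}_X\varphi=(k+p-1)\varphi$, and Cartan's formula gives $d(X\lrcorner\varphi)=(k+p-1)\varphi$, so $\iota^*\varphi$ is exact. In general $\varphi$ has a normal component, so it must be corrected.
\begin{itemize}
\item I will look for $\omega=\varphi+c\bigl((r^2-1)\varphi+\beta\bigr)$, where $\beta$ is a form built from $X\lrcorner\varphi$, for instance $\beta=a\,r^2 X^\flat\wedge(X\lrcorner\varphi)$ or $\beta = d(r^2 X\lrcorner\varphi)$-type terms, and where the constants make $\omega$ harmonic.
\item To compute $\Delta\omega$ I will use the identity $\Delta(r^2\gamma)=-2n\gamma-4\nabla_X\gamma+r^2\Delta\gamma$ already used in \eqref{eq:danslapreuvedesconditions}, together with $\nabla_X\gamma=(\deg)\gamma$ for homogeneous coefficient forms.
\item The conditions $\Delta\omega=0$ and $\nu\lrcorner\omega=0$ at $r=1$ form a small linear system in $a,c$. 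Its solution should involve $(n+2k-2)^{-1}$.
\item Computing $\nu\lrcorner d\omega$ by the same Cartan argument, with degree $k+1$ for the correction terms, should yield $\nu\lrcorner d\omega=-(k+p-1)\frac{n+2k}{n+2k-2}\iota^*\omega$.
\item Since $\iota^*\omega=\iota^*\varphi$, injectivity gives multiplicity at least $M_{k,p-1}$.
\end{itemize}
Finding the right correction $\beta$ and carrying out this algebra is the main obstacle. I would first try the ansatz from the harmonic-extension argument of \lemref{lem:PetQpolynomes}, namely $Q(r)$ and $P(r)$ as combinations of $r^{k+p-1}$ and $r^{k+p+1}$, as in \eqref{eq:star4}.

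\emph{Completeness.} By \thref{thm:vplaplacien} and the isomorphism $d:H''_{k,p}\to H'_{k-1,p+1}$, the traces $\iota^*H''_{k,p}$ and $\iota^*H'_{k-1,p}$ exhaust the co-exact and exact eigenforms of $\bar\Delta$. Their dimensions $M_{k,p}$ and $M_{k,p-1}$ should match the multiplicities in \thref{thm:vplaplacienmulti}, which I will use as a check. For $p\le n-2$ these eigenforms span a dense subspace of $L^2(\Omega^p\mathbb{S}^{n-1})$, since the harmonic forms of the sphere vanish. The Dirichlet-to-Neumann operator is self-adjoint, so any further eigenform would be $L^2(\partial M)$-orthogonal to all of these traces. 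Its trace would then vanish, and the injectivity of $\iota^*$ on Steklov eigenforms, as in \lemref{lm:iotainj}, forces it to be zero. The case $p=n-1$, where the volume form is also present, is handled by Hodge duality.
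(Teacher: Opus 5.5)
The paper quotes this result from Raulot--Savo without proof, so I judge your argument on its own merits.

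Your co-exact family is correct and complete. Your ansatz for the exact family also works, but only if you drop the factor $r^2$ in your first candidate for $\beta$: with it, $\Delta\beta$ produces $X^\flat\wedge(X\lrcorner\varphi)$ and $r^2\varphi$ terms that nothing cancels. Put $\mu=X\lrcorner\varphi$. Then $d\mu=(k+p-1)\varphi$. In $\mathbb{R}^n$ one has $\Delta(X\lrcorner\varphi)=X\lrcorner\Delta\varphi+2\delta\varphi$ and $\delta(X\lrcorner\varphi)=-X\lrcorner\delta\varphi$, so $\mu\in H''_{k,p-1}$. Moreover $\Delta(X^\flat\wedge\mu)=X^\flat\wedge\Delta\mu-2d\mu=-2(k+p-1)\varphi$. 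Hence
\[
\omega=\varphi+\frac{k+p-1}{n+2k-2}(r^2-1)\varphi-X^\flat\wedge\mu
\]
has the following properties:
\begin{itemize}
\item $\omega$ is harmonic and $\iota^*\omega=\iota^*\varphi$;
\item $X\lrcorner\omega=(r^2-1)\bigl(\frac{k+p-1}{n+2k-2}-1\bigr)\mu$, which vanishes on $\mathbb{S}^{n-1}$;
\item $d\omega=(k+p-1)\frac{n+2k}{n+2k-2}\,X^\flat\wedge\varphi$, which gives exactly $\sigma^{\Ex}_{k,p}$.
\end{itemize}
Injectivity of $\varphi\mapsto\iota^*\varphi$ on $H'_{k-1,p}$ needs its own line, because $\varphi$ has a normal part. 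Argue through $\varphi=\frac{1}{k+p-1}d\mu$ with $\mu\in H''_{k,p-1}$, rather than through polynomials vanishing on the sphere.

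The genuine gap is in the completeness step. Your density argument needs \emph{every} eigenform of $\bar\Delta$ on $p$-forms to be the trace of an element of $H''_{k,p}$ or $H'_{k-1,p}$. Neither ingredient you cite gives this. \thref{thm:vplaplacien} only lists eigenvalues. The isomorphism $d:H''_{k,p}\to H'_{k-1,p+1}$ relates two spaces of polynomial forms to each other, not to eigenspaces on the sphere. The dimension ``check'' would need $M_{k,p}$ in closed form, which you do not have; the paper defers it to representation theory. As written, exhaustion is assumed, and so are the multiplicity claims.

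What is missing is an extension lemma. Let $\xi$ be co-closed on $\mathbb{S}^{n-1}$ with $\bar\Delta\xi=(k+p)(k+n-p-2)\xi$, $k\ge1$; these are all the co-exact eigenvalues. Then:
\begin{itemize}
\item $\omega=r^{k+p}\xi$ is harmonic on $\mathbb{B}^n\setminus\{0\}$ by \lemref{lm:delta=deltat+deltar} with $P=0$;
\item $\omega$ is co-closed, since for tangential $\xi$, $\delta(Q\xi)$ equals $r^{-2}Q\,\bar\delta\xi$ up to sign;
\item its Cartesian coefficients are harmonic and homogeneous of degree $k\ge1$, hence bounded near $0$, hence they extend across $0$ and are homogeneous polynomials.
\end{itemize}
So $\omega\in H''_{k,p}$ and $\iota^*\omega=\xi$. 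Applying the lemma in degree $p-1$ and taking $d$ produces every exact eigenform as a trace from $H'_{k-1,p}$.

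With this lemma your argument closes, provided you take the putative extra eigenform orthogonal, inside its own eigenspace, to those already found. This matters: for $n=2p$ one has $\sigma^{\Ex}_{k,p}=\sigma^{\Coex}_{k,p}=k+p$.

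For $p=n-1$ you need no Hodge duality; on the ball it would exchange absolute and relative conditions anyway. The volume form is $\iota^*$ of $\star X^\flat\in H''_{1,n-1}$, with $\sigma=n$, and the same density argument applies. Note that in this degree the ``co-exact'' family is just this one harmonic, not co-exact, form.
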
 

\bigskip
\noindent In degree $p = n-1$:

\medskip
\begin{thm}\cite[Theorem 3]{RaulotSavo3}
The spectrum of the Dirichlet-to-Neumann operator on the unit ball of $\mathbb{R}^{n}$ consists of the eigenvalues:
\begin{equation*}
    \sigma^{\Coex}_{1,n-1} = n
\end{equation*}
with multiplicity one (the associated eigenspace being spanned by the volume form of $\mathbb{S}^{n-1}$) and, for $k \ge 1$:
\begin{equation*}
    \sigma^{\Ex}_{k,n-1} = (k + n - 2) \frac{n + 2k}{n + 2k - 2}
\end{equation*}
with multiplicity $M_{k,n-2}$ and associated eigenspace $\iota^*H'_{k-1,n-1}$.\end{thm}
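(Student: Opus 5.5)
The plan is to treat the Steklov problem \eqref{eq:steklovFormes} on $(n-1)$-forms of $\mathbb{B}^n$ the same way the NBS problem was treated in \secref{sec:sectionNBSSpectre}, but with only one harmonic piece instead of the Almansi pair. I look for harmonic $(n-1)$-forms of the separated shape \eqref{eq:W=P+Q},
\[
\omega = P(r)\,dr\wedge\eta + Q(r)\,\xi ,
\]
where $\iota^*\omega=Q(1)\xi$ is an eigenform of $\bar{\Delta}_{n-1}$ on $\mathbb{S}^{n-1}$. By \thref{thm:vplaplacienFCT} and Hodge duality, the spectrum of $\bar{\Delta}_{n-1}$ consists of $0$, spanned by $\mathrm{dVol}_{\mathbb{S}^{n-1}}$, and of the eigenvalues $k(k+n-2)$, $k\ge1$, which are carried by exact forms $\xi=\bar{d}\eta$. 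I treat these two families separately, then prove completeness and count multiplicities.

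\emph{Volume form.} Take $\omega=r^n\,\mathrm{dVol}_{\mathbb{S}^{n-1}}=\partial_r\lrcorner\,(r\,dx_1\wedge\cdots\wedge dx_n)$, which is the contraction of $dx_1\wedge\cdots\wedge dx_n$ with the position vector $x$. Its Cartesian coefficients are linear, so $\omega$ is harmonic, and $\nu\lrcorner\omega=0$ holds trivially. Moreover $d\omega=n\,dx_1\wedge\cdots\wedge dx_n$, so $\nu\lrcorner d\omega=-n\,\mathrm{dVol}_{\mathbb{S}^{n-1}}=-n\,\iota^*\omega$. This gives $\sigma=n$, with a one-dimensional eigenspace.

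\emph{Exact forms.} Fix $\xi=\bar{d}\eta$ with $\bar{\Delta}\xi=\lambda\xi$. I use the Euler ansatz of the proof of \propref{prop:propspectre} (the case $\xi=\bar{d}\eta$), which needs only $\Delta\omega=0$, i.e.\ $(E_{\tann})$ and $(E_{\radd})$ in \eqref{eq:AWBW}; by \rkref{rk:condition+} these computations remain valid for $p=n-1$. Writing $\lambda=\boldsymbol{\lambda^{\Ex}_{j,n-1}}=(j+n-1)(j+1)$, the regular solutions are spanned by $(Q,P)=(a r^{\alpha},b r^{\alpha-1})$ with $\alpha\in\{j+n-1,\,j+n+1\}$. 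The condition $\nu\lrcorner\omega=0$ means $P(1)=0$, which forces $P=r^{j+n-2}-r^{j+n}$. Formula \eqref{eq:star1} then gives
\[
Q=\frac{r^{j+n-1}}{j+n-1}+\frac{r^{j+n+1}}{j+1}.
\]
The computation of $d\omega$ in \propref{prop:conditions} gives $\nu\lrcorner d\omega=-Q'(1)\xi+P(1)\bar{d}\eta=-Q'(1)\xi$, so $\sigma=Q'(1)/Q(1)$. We have $Q(1)=(2j+n)/\bigl((j+n-1)(j+1)\bigr)$ and $Q'(1)=(2j+n+2)/(j+1)$, hence
\[
\sigma=\frac{(2j+n+2)(j+n-1)}{2j+n}.
\]
Setting $k=j+1\ge1$ this is exactly $\sigma^{\Ex}_{k,n-1}=(k+n-2)\frac{n+2k}{n+2k-2}$. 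The eigenspace is the image of the $\lambda$-eigenspace of $\bar{\Delta}_{n-1}$. Its dimension equals that of $\ker(\bar{\Delta}_0-\boldsymbol{\lambda_{k,0}})$, which I identify with $M_{k,n-2}=\dim H''_{k,n-2}$ through the isomorphism $d:H''_{k,n-2}\to H'_{k-1,n-1}$. Showing that these harmonic forms lie in $H'_{k-1,n-1}$ amounts to rewriting the radial profiles in Cartesian coordinates, which is routine.

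\emph{Completeness (main obstacle).} The delicate step is to show that no other eigenvalues occur. I would argue as in \propref{prop:spectrentier}. The Dirichlet-to-Neumann operator is self-adjoint with discrete spectrum on $L^2$ traces, and $H_{\Abs}(\mathbb{B}^n)=0$. The traces constructed above exhaust all eigenforms of $\bar{\Delta}_{n-1}$, and these span a dense subspace of $L^2(\mathbb{S}^{n-1})$. An eigenform with any other eigenvalue would have trace $L^2$-orthogonal to all of them, by self-adjointness, hence zero trace. Uniqueness of the harmonic extension satisfying the boundary conditions then forces the form itself to vanish.
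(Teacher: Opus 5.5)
The paper gives no proof of this statement. It is quoted from Raulot--Savo, whose formulation goes through the polynomial spaces $H'_{k,p}$, $H''_{k,p}$. So your polar-coordinate argument has to stand on its own, and most of it does:
\begin{itemize}
\item $r^n\,\mathrm{dVol}_{\mathbb{S}^{n-1}}=x\lrcorner(dx_1\wedge\cdots\wedge dx_n)$ gives $\sigma=n$.
\item For $\xi=\bar d\eta$, $\bar\delta\eta=0$, $\lambda=(j+1)(j+n-1)$, the tangential and radial equations with $(Q,P)=(ar^\alpha,br^{\alpha-1})$ give $\alpha\in\{j+n-1,\,j+n+1,\,1-j,\,-j-1\}$.
\item Your profiles $P=r^{j+n-2}-r^{j+n}$ and $Q=\frac{r^{j+n-1}}{j+n-1}+\frac{r^{j+n+1}}{j+1}$ are right, and $\sigma=Q'(1)/Q(1)=(k+n-2)\frac{n+2k}{n+2k-2}$.
\item Your completeness argument is the symmetry/density argument of \propref{prop:spectrentier}, and it is sound for excluding further eigenvalues.
\end{itemize}

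The step that fails is the identification of the eigenspace and the multiplicity. The forms $\omega$ you construct do not lie in $H'_{k-1,n-1}$. In Cartesian coordinates they have homogeneous parts of degrees $j$ and $j+2$. They are not even closed: $d\omega=dr\wedge(Q'-P)\xi=\frac{2j+n+2}{j+1}\,r^{j+n}\,dr\wedge\xi\neq0$. So ``rewriting the radial profiles in Cartesian coordinates'' cannot work. Moreover, the isomorphism $d:H''_{k,n-2}\to H'_{k-1,n-1}$ only gives $M_{k,n-2}=\dim H'_{k-1,n-1}$, not $M_{k,n-2}=\dim\ker(\bar\Delta_0-\lambda_{k,0})$.

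What you actually need is that $\iota^*$ maps $H'_{k-1,n-1}$ isomorphically onto the $\lambda_{k,0}$-eigenspace of $\bar\Delta_{n-1}$. Here is one way to get it.
\begin{itemize}
\item If $\tau\in H'_{k-1,n-1}$, then $\star\tau$ is a closed and co-closed homogeneous polynomial $1$-form of degree $k-1$. Hence $\star\tau=dh$ with $h=\frac1k\,x\lrcorner\star\tau$, which is harmonic and homogeneous of degree $k$.
\item Conversely, $\star dh\in H'_{k-1,n-1}$ for every such $h$.
\item On the sphere, $\iota^*(\star dh)=\pm(\partial_r h)\,\mathrm{dVol}_{\mathbb{S}^{n-1}}=\pm k\,h_{|\mathbb{S}^{n-1}}\,\mathrm{dVol}_{\mathbb{S}^{n-1}}$.
\end{itemize}
Therefore $\iota^*$ is injective on $H'_{k-1,n-1}$, with image exactly $\{f\,\mathrm{dVol}_{\mathbb{S}^{n-1}} : \bar\Delta_0 f=\lambda_{k,0}f\}$. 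This image contains your traces $Q(1)\xi$. It yields both the eigenspace $\iota^*H'_{k-1,n-1}$ and the multiplicity $M_{k,n-2}$. (You can also check directly that the $\alpha=j+n-1$ component $\frac{1}{j+n-1}d(r^{j+n-1}\eta)$ of $\omega$ is closed, co-closed and homogeneous of degree $k-1$, with trace $\xi/(j+n-1)$.)

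Two smaller points.
\begin{itemize}
\item The root $\alpha=1-j$ equals $1$ when $j=0$. So it must be excluded by regularity ($Q=O(r^{n-1})$, \lemref{lm:P0Q0}), not by its sign.
\item To get multiplicities rather than just the list of eigenvalues, phrase completeness as follows: the trace of a $\sigma$-eigenform lies in the span of the constructed traces with the same $\sigma$. Together with $n<\sigma^{\Ex}_{1,n-1}<\sigma^{\Ex}_{2,n-1}<\cdots$ for $n\ge3$, this pins down the multiplicities. For $n=2$ one has $\sigma^{\Ex}_{1,1}=2=n$, so ``multiplicity one'' must be read per family.
\end{itemize}
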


  For DBS eigenvalues on the ball, we follow the presentation of \cite{FerreroGazzolaWeth} and define for any $d \in \mathbb{N}$ the following space:
 \begin{equation*}
    \mathcal{D}_k := \{ P \in C^\infty(\mathbb{R}^n); \ \Delta P = 0 \text{ in } \mathbb{R}^n, \ P \text{ homogeneous polynomial of degree } k - 1 \}.
\end{equation*}
\begin{thm}\cite[Theorem 1.3]{FerreroGazzolaWeth}\label{thm:spectreDBS}
  On $\mathbb{B}^n$, for $n \ge 2$, we have for all $k = 1, 2, 3, \dots$:
\begin{itemize}
    \item[$\bullet$] the eigenvalues of \eqref{eq:DBSfct} are $q_k = n + 2(k - 1)$
    \item[$\bullet$] the multiplicity of $q_k$ equals $\dim(\mathcal{D}_k)$
    \item[$\bullet$] Every $q_k$-eigenfunction is of the form $(1 - |x|^2)\psi_k(x)$, where $\psi_k \in \mathcal{D}_k$.
\end{itemize}
\end{thm}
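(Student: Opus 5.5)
The plan is to reduce the scalar DBS problem \eqref{eq:DBSfct} on $\mathbb{B}^n$ to a Steklov-type problem for a single harmonic function, and then diagonalize that problem with spherical harmonics. I follow the sign conventions under which \eqref{eq:DBSfct} has positive spectrum, i.e. the analyst's Laplacian with the outward normal derivative. With the paper's nonnegative Hodge Laplacian and inward normal, both $\Delta f$ and $\partial_\nu f$ change sign, so the condition $\Delta f=q\,\partial_\nu f$ is unchanged.

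\textbf{Step 1 (factor the eigenfunction).} Let $f$ be an eigenfunction. By the Almansi decomposition recalled in the proof of \thref{thm:spectreNBS}, I would write $f=h+(1-|x|^2)\psi$ with $h,\psi$ harmonic on $\mathbb{B}^n$. The condition $f=0$ on $\mathbb{S}^{n-1}$ gives $h_{|\mathbb{S}^{n-1}}=0$, hence $h\equiv0$ by the maximum principle. So every eigenfunction has the form $f=(1-|x|^2)\psi$ with $\psi$ harmonic. Conversely, every such $f$ is biharmonic and vanishes on the boundary, so only the remaining boundary condition has to be imposed.

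\textbf{Step 2 (translate the remaining condition).} As in \eqref{eq:danslapreuvedesconditions}, I compute $\Delta\big((1-|x|^2)\psi\big)=-2n\psi-4r\partial_r\psi$, using $\Delta\psi=0$. On $r=1$ the outward normal derivative is $\partial_r f=-2\psi$. The condition $\Delta f=q\,\partial_\nu f$ then becomes
\[-2n\psi-4\partial_r\psi=-2q\psi,\quad\text{that is,}\quad \partial_r\psi=\tfrac{q-n}{2}\,\psi \ \text{ on } \mathbb{S}^{n-1}.\]
This is exactly the scalar Steklov problem for the harmonic function $\psi$, with Steklov eigenvalue $\tfrac{q-n}{2}$.

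\textbf{Step 3 (solve by spherical harmonics).} I expand $\psi=\sum_{j\ge0} r^jY_j(\theta)$, where each $Y_j$ is a spherical harmonic of degree $j$; equivalently $\psi$ is a sum of homogeneous harmonic polynomials of degree $j$. Since $\partial_r(r^jY_j)=j\,r^jY_j$ and the $Y_j$ are $L^2(\mathbb{S}^{n-1})$-orthogonal, the boundary condition forces $Y_j=0$ unless $j=\tfrac{q-n}{2}$. Writing $j=k-1$ with $k\ge1$, I get $q=q_k=n+2(k-1)$. Moreover $\psi$ is then a homogeneous harmonic polynomial of degree $k-1$, i.e. $\psi\in\mathcal{D}_k$, and conversely every $\psi\in\mathcal{D}_k$ gives a $q_k$-eigenfunction $(1-|x|^2)\psi$. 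Because $\psi\mapsto(1-|x|^2)\psi$ is injective, the multiplicity of $q_k$ equals $\dim(\mathcal{D}_k)$. Completeness of the list is automatic here: Step 1 shows every eigenfunction has this form, so no separate density argument (as in \propref{prop:spectrentier}) is needed.

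\textbf{Main obstacle.} The delicate point is Step 3, namely justifying termwise differentiation of the expansion up to the boundary. I would handle it by using smoothness of $\psi$ up to $\overline{\mathbb{B}^n}$, which follows from elliptic regularity for the DBS problem. Then the restriction $\psi_{|\mathbb{S}^{n-1}}$ has a rapidly decaying spherical-harmonic expansion, and I can pair the boundary identity against each $Y_j$ in $L^2(\mathbb{S}^{n-1})$. I would also double-check the sign bookkeeping in Step 2, so that the eigenvalues come out positive and equal to $n+2(k-1)$.
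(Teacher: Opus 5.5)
Your argument is correct. The paper gives no proof of this statement; it is quoted from Ferrero--Gazzola--Weth. The natural comparison is therefore with the paper's own treatment of the analogous NBS problem in the proof of \thref{thm:spectreNBS}.

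You use the same two ingredients as the paper: the Almansi decomposition, and the identity $\Delta\big((|x|^2-1)\psi\big)=-2n\psi-4\nabla_x\psi$ of \eqref{eq:danslapreuvedesconditions}. The difference is that the Dirichlet condition $f=0$ lets you kill the harmonic Almansi component outright by the maximum principle. This forces every eigenfunction into the form $(1-|x|^2)\psi$ and collapses \eqref{eq:DBSfct} to the scalar Steklov problem $\partial_r\psi=\tfrac{q-n}{2}\psi$. Completeness of the list and the multiplicity count then come for free. In the NBS case both Almansi components survive. That is why the paper needs the ansatz \eqref{eq:W=P+Q}, the long case analysis, and a separate injectivity-plus-density argument (\propref{prop:spectrentier}) to rule out further eigenvalues. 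Your route avoids all of that, but it relies specifically on the Dirichlet condition and does not transfer to NBS.

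Two small points to tidy up.
\begin{enumerate}
\item The maximum principle needs $h$ continuous up to $\mathbb{S}^{n-1}$, so you need the Almansi pieces of $f$, not just $f$, to be regular up to the boundary. This holds: for $f$ smooth on the closed ball you can take $\psi(x)=\tfrac14\int_0^1 t^{n/2-1}(\Delta f)(tx)\,dt$, with the paper's nonnegative $\Delta$. This $\psi$ is harmonic, satisfies $2n\psi+4r\partial_r\psi=\Delta f$, and is smooth on the closed ball.
\item The obstacle you flag in Step 3 disappears if you use Green's second identity against the harmonic function $r^jY_j$. It gives $\int_{\mathbb{S}^{n-1}}Y_j\,\partial_r\psi=j\int_{\mathbb{S}^{n-1}}Y_j\,\psi$. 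Hence $\big(j-\tfrac{q-n}{2}\big)\int_{\mathbb{S}^{n-1}}Y_j\,\psi=0$ for every spherical harmonic $Y_j$, and no termwise differentiation of the series is needed.
\end{enumerate}
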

Actually, for $n=1$ the spectrum of DBS for functions satisfying \eqref{eq:DBSfct} on the interval $[-1,1]$ reduces to $\{1,3\}$, each with multiplicity $1$ (see \cite{BerchioGazzolaMitidieri}).

According to \cite[Remark 3.7]{BSFidaGeorgeOlaNicolas}, the spectrum without multiplicities of the DBS operator for scalar functions \eqref{eq:DBSfct} and for $p$-differential forms \eqref{eq:DBSF1} on the unit ball are the same. Therefore, we obtain identical eigenvalues but with different multiplicities. We have that $\operatorname{mult}(q_{k,p}) = \operatorname{mult}(q_k) \times \binom{n}{p}.$

\subsection{Testing a Kuttler-Sigillito inequality}\label{sec:KSTEST}
After determining the spectra of the NBS operator, we evaluate in this section inequality \eqref{eq:q1sigma1<l1PourLaPreuve} on the Euclidean unit ball. Given the spectrum of the Steklov operator \eqref{eq:steklovFormes} in \thref{thm:spectreSteklov}, the DBS \eqref{eq:DBSF1} in \thref{thm:spectreDBS}, and the NBS \eqref{eq:NBSF3preuve} in \thref{thm:spectreNBS}, we test how far the inequality \eqref{eq:q1sigma1<l1PourLaPreuve} is from being optimal on the ball.

\begin{thm}
For $n\geq3$ and $p\in\{1,\cdots,n-2\}$, we have the following relation on the Euclidean ball:
\begin{equation*}
\frac{q_{1,p} \sigma_{1,p}^2}{ \ell_{1,p}} = 
\begin{cases} 
\dfrac{n^2+4n-4p}{n(n+4)} & \text{if } 1\leq p \leq \frac{n}{2} \\[2ex]
\dfrac{n}{n+2} & \text{if } p > \frac{n}{2}.
\end{cases}
\end{equation*} 
\end{thm}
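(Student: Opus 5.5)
The plan is to read off the three first eigenvalues $q_{1,p}$, $\sigma_{1,p}$, $\ell_{1,p}$ from the explicit spectra and compute the ratio. Throughout I take $\ell_{1,p}$ to be the smallest positive NBS eigenvalue. This is justified because on the ball $H_{\Abs}(\mathbb{B}^n)=\{0\}$ for $1\le p\le n-2$, so by \thref{thm:TheoremeGeneralNBSF3PourLaPreuve} the NBS kernel is trivial.

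\textbf{Step 1: $q_{1,p}$ and $\sigma_{1,p}$.} By \thref{thm:spectreDBS} and the remark that the DBS spectra on functions and on $p$-forms coincide as sets, $q_{1,p}=q_1=n$. By \thref{thm:spectreSteklov} both families $\sigma^{\Ex}_{k,p}=(k+p-1)\frac{n+2k}{n+2k-2}$ and $\sigma^{\Coex}_{k,p}=k+p$ are increasing in $k$, so
\[
\sigma_{1,p}=\min\Bigl(\tfrac{p(n+2)}{n},\,p+1\Bigr).
\]
Since $\frac{p(n+2)}{n}\le p+1$ if and only if $2p\le n$, we get $\sigma_{1,p}=\frac{p(n+2)}{n}$ for $p\le n/2$ and $\sigma_{1,p}=p+1$ for $p>n/2$. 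Hence $q_{1,p}\sigma_{1,p}^2=\frac{p^2(n+2)^2}{n}$ in the first case and $n(p+1)^2$ in the second.

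\textbf{Step 2: $\ell_{1,p}$.} From \thref{thm:spectreNBS}, $\ell^{\Coex}_{k,p}=(k+p+1)^2(n+2k+2)$ is clearly increasing in $k$. For $\ell^{\Ex}_{k,p}$, the numerator $(2k+n+2)^2(k+p)^2(2k+n+4)$ grows like $k^5$ while the denominator $4k^2+4kn+4k+n^2+4n-4p$ grows like $k^2$. I will check that $\ell^{\Ex}_{k+1,p}>\ell^{\Ex}_{k,p}$, for instance by comparing ratios of consecutive factors. So the minima are at $k=0$, and
\[
\ell_{1,p}=\min\Bigl((p+1)^2(n+2),\ \frac{(n+2)^2p^2(n+4)}{n^2+4n-4p}\Bigr).
\]

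\textbf{Step 3: the comparison, which I expect to be the main obstacle.} Define
\[
f(p):=(p+1)^2(n^2+4n-4p)-p^2(n+2)(n+4)=-4p^3-(2n+16)p^2+(2n^2+8n-4)p+n^2+4n .
\]
Then $\ell^{\Ex}_{0,p}\le\ell^{\Coex}_{0,p}$ if and only if $f(p)\ge0$. The proof needs this threshold to match the Steklov threshold $p=n/2$.

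For $p\le n/2$: $f$ is concave for $p>0$, $f(0)=n^2+4n>0$, and $f(n/2)=n^2+2n>0$. So $f>0$ on $[0,n/2]$, and therefore $\ell_{1,p}=\ell^{\Ex}_{0,p}$.

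For $p>n/2$, i.e. $p\ge\frac{n+1}{2}$: I need $f(p)\le 0$. I would first check $f\bigl(\frac{n+1}{2}\bigr)<0$ for all $n\ge3$, using small cases such as $n=3,p=2$ (giving $-23$) and $n=5,p=3$ (giving $-39$) as sanity checks. I would then show $f$ is decreasing for $p\ge\frac{n+1}{2}$, since $f'(p)=-12p^2-(4n+32)p+2n^2+8n-4<0$ there. This gives $\ell_{1,p}=\ell^{\Coex}_{0,p}$.

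If this sign analysis fails for some small $n$ near $p=n/2$, the correct statement would need an extra middle case. So this is the step to verify most carefully.

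\textbf{Step 4: conclusion.} In the first case,
\[
\frac{p^2(n+2)^2/n}{(n+2)^2p^2(n+4)/(n^2+4n-4p)}=\frac{n^2+4n-4p}{n(n+4)} .
\]
In the second case, $\frac{n(p+1)^2}{(p+1)^2(n+2)}=\frac{n}{n+2}$. Both ratios are $<1$, which recovers \thref{thm:proofQSigma<l}.
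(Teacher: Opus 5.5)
Your proposal is correct and follows the same route as the paper. You read off $q_{1,p}=n$, take $\sigma_{1,p}$ from the $k=1$ Steklov eigenvalues with the crossover at $p=n/2$, take $\ell_{1,p}$ from the $k=0$ NBS eigenvalues, and then form the ratio. The paper simply asserts that $\ell^{\Ex}_{0,p}<\ell^{\Coex}_{0,p}$ exactly when $p\le n/2$, so your Step 3 supplies a justification the paper omits, and the step you flagged does go through for every $n\ge 3$: $f\bigl(\tfrac{n+1}{2}\bigr)=-\tfrac{n^2+8n+13}{2}<0$, so by concavity the sign change of $f$ lies strictly between $n/2$ and $(n+1)/2$ and no middle case occurs.
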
 Therefore, we recover $q_{1,p}\sigma_{1,p}^2<\ell_{1,p}.$

\begin{proof}
For $k=1,$ we obtain from \thref{thm:spectreSteklov} that $\sigma^{\Coex}_{1,p}=p+1$ and $\sigma^{\Ex}_{1,p}=p(1+\frac{2}{n}).$ We have that $\sigma^{\Ex}_{1,p}\leq\sigma^{\Coex}_{1,p}$ if and only if $p\leq\frac{n}{2}.$ Consequently, \[
    \sigma_{1,p} = 
    \left\{
    \hspace{-0.3em}
    \begin{array}{r@{\quad}c@{\quad}l@{\quad}l}
        \sigma^{\Ex}_{1,p} &=& p\left(1 + \dfrac{2}{n}\right) & \text{if } p \le \dfrac{n}{2} \\[0.6em]
        \sigma^{\Coex}_{1,p} &=& p + 1 & \text{if } p > \dfrac{n}{2}.
    \end{array}
    \right.
\]We also have by \thref{thm:spectreDBS} that $q_{1,p}=n.$ From \thref{thm:spectreNBS}, we obtain$$\ell^{\Coex}_{0,p}=(p+1)^2(n+2) \quad \text{and} \quad \ell^{\Ex}_{0,p}=\frac{p^2(n+2)^2(n+4)}{n^2+4n-4p}.$$ Therefore, we have the following:
$$\ell_{0,p} = 
\left\{
\hspace{-0.3em}
\begin{array}{r@{\quad}c@{\quad}l@{\quad}l}
    \ell^{\Ex}_{0,p} &=& \dfrac{p^2(n+2)^2(n+4)}{n^2+4n-4p} & \text{if } p \le \dfrac{n}{2} \\[1.2em]
    \ell^{\Coex}_{0,p} &=& (p+1)^2(n+2) & \text{if } p > \dfrac{n}{2}.
\end{array}
\right.
$$

For $1<p \le \frac{n}{2}$, we verify the inequality:
$$n p^2 \left(1 + \frac{2}{n}\right)^2 <\frac{p^2(n+2)^2(n+4)}{n^2+4n-4p}.$$ We have $$\frac{q_{1,p} \sigma_{1,p}^2}{ \ell_{1,p}} = \dfrac{n^2+4n-4p}{n(n+4)} <1$$ therefore the strict inequality holds.

For $p > \frac{n}{2}$, we have:
$$n(p+1)^2 < (p+1)^2(n+2)$$ and $$\frac{q_{1,p} \sigma_{1,p}^2}{ \ell_{1,p}}= \dfrac{n}{n+2}<1.$$ \end{proof}

\bibliographystyle{abbrv}
\bibliography{references}

\end{document}